\newcommand{\QQ}{\mathbb Q}
\newcommand{\RR}{\mathbb R}
\newcommand{\PP}{\mathbb P}
\newcommand{\ZZ}{\mathbb Z}
\newcommand{\kk}{\mathbbm k}
\theoremstyle{remark}
\newtheorem*{claim}{Claim}
\newenvironment{claimproof}{\par\noindent\textit{Proof of claim.}\space}{\hfill$\diamond$\medskip\par}
\theoremstyle{definition}
\newtheorem{cor}[subsubsection]{Corollary}
\newtheorem{lem}[subsubsection]{Lemma}
\newtheorem{prop}[subsubsection]{Proposition}
\newtheorem{thm}[subsubsection]{Theorem}
\newtheorem{defn}[subsubsection]{Definition}
\newtheorem{eg}[subsubsection]{Example}
\newtheorem{rem}[subsubsection]{Remark}
\DeclareMathOperator{\rk}{rk}
\newcommand{\myChow}{A_\nabla}
\newcommand{\trop}{\operatorname{trop}}
\newcommand{\be}{\mathbf e}
\newcommand{\DHR}{\operatorname{DHR}}
\newcommand{\DHRtext}{Dragon Hall-Rado\ }
\title{Simplicial generation of Chow rings of matroids}
\author{Spencer Backman, Christopher Eur, Connor Simpson}
\address{The University of Vermont, Burlington, VT. USA.}
\email{sbackman@uvm.edu}
\address{Harvard University.  Cambridge, MA. USA.}
\email{ceur@math.harvard.edu}
\address{University of Wisconsin--Madison. Madison, WI. USA}
\email{csimpson6@wisc.edu}
\begin{document}

\maketitle

\begin{abstract}
We introduce a presentation of the Chow ring of a matroid by a new set of generators, called ``simplicial generators.''  These generators are analogous to nef divisors on projective toric varieties, and admit a combinatorial interpretation via the theory of matroid quotients.  Using this combinatorial interpretation, we (i) produce a bijection between a monomial basis of the Chow ring and a relative generalization of Schubert matroids, (ii)  recover the Poincar\'e duality property, (iii) give a formula for the volume polynomial, which we show is log-concave in the positive orthant, and (iv) recover the validity of Hodge-Riemann relations in degree 1, which is the part of the Hodge theory of matroids that currently accounts for all combinatorial applications of \cite{AHK18}.
Our work avoids the use of ``flips,'' the key technical tool employed in \cite{AHK18}.
\end{abstract}


\section{Introduction}

Chow rings of matroids were introduced in \cite{FY04} as generalizations of cohomology rings of wonderful compactifications of hyperplane arrangement complements \cite{dCP95}:

\begin{defn}
Let $M$ be a loopless matroid of rank $r=d+1$ on a ground set $E$.  The \textbf{Chow ring} of $M$ is a graded ring $A_{FY}^\bullet(M) = \bigoplus_{i=0}^d A_{FY}^i(M)$ defined as
$$A_{FY}^\bullet(M):= \frac{\RR[z_F : F\subseteq E \textnormal{ a nonempty flat of $M$}]}{\langle z_Fz_{F'} \ | \ F,F' \textnormal{ incomparable}\rangle + \langle \sum_{F\supseteq a} z_F  \ | \ a \textnormal{ an atom in the lattice of flats of $M$} \rangle}.$$
\end{defn}

\bigskip
We introduce a presentation of the Chow ring by a new set of generators:



\newtheorem*{dfn:simplicial}{Definition \ref{dfn:simplicial}}
\begin{dfn:simplicial}
Let $M$ be a loopless matroid on $E$.  The \textbf{simplicial presentation} $\myChow^\bullet(M)$ of the Chow ring of $M$ is the quotient of a polynomial ring $\RR[h_F \ | \ F \subseteq E \textnormal{ nonempty flat of $M$}]$ by the kernel of the surjective map
\[
\RR[h_F \ | \ F \subseteq E \textnormal{ nonempty flat of $M$}] \to A_{FY}^\bullet(M) \quad \textnormal{where}\quad  h_F \mapsto -\sum_{G \supseteq F} z_G.
\]
The generators $h_F$ of $\myChow^\bullet(M)$ are called the \textbf{simplicial generators} of the Chow ring of $M$.
\end{dfn:simplicial}

The simplicial presentation allows us to
\begin{enumerate}[label=(\roman*)]
\item interpret  the simplicial generators and their monomials in $\myChow^\bullet(M)$ combinatorially via the theory of matroid quotients;
\item recover the Poincar\'e duality property of $\myChow^\bullet(M)$;
\item compute the volume polynomial of $\myChow^\bullet(M)$, which we show to be Lorentzian in the sense of \cite{BH20}, and whose formula generalizes a main result of \cite{Pos09}; and
\item give a simplified proof of the Hodge-Riemann relations in degree 0 and 1 for matroids, which is the part of the Hodge theory of matroids in \cite{AHK18} that accounts for all currently known combinatorial applications of \cite{AHK18}.
\end{enumerate}

We achieve these by establishing combinatorial properties of the simplicial presentation that reflect the following two geometric observations:

\begin{enumerate}[label = (\Alph*)]
\item\label{obs:transport} Suppose base-point-free divisors $D_1, \ldots, D_s$ on a variety $X$ generate the Chow ring $A^\bullet(X)$, and $Y\subset X$ is a subvariety.  Then the pullback map  $A^\bullet(X)\to A^\bullet(Y)$ can be computed via the intersections of $Y$ with general hyperplane pullbacks of $X \to \PP(H^0(D_i))$.  If furthermore the pullback map is surjective, then these intersections generate $A^\bullet(Y)$.
\item\label{obs:hodgenef} For base-point-free divisors $D_1, \ldots, D_s$ whose sum is an ample divisor on a projective variety $Y$, let $\operatorname{vol}(t_1, \ldots, t_s)$ be the volume polynomial $\int_Y (t_1D_1 + \cdots + t_sD_s)^{\dim Y}$.  Then $\operatorname{vol}(\underline t)$, as a function $\RR^s \to \RR$, is positive and log-concave on the positive orthant $\RR_{>0}^s$ \cite[\S1.6.A]{Laz04}.
\end{enumerate}

When a matroid $M$ is realizable, its Chow ring is isomorphic to the Chow ring of a projective variety $Y_{\mathscr R(M)}$, which is a subvariety of a projective toric variety $X_{A_n}$, and the pullback $A^\bullet(X_{A_n}) \to A^\bullet(Y_{\mathscr R(M)})$ is surjective (see \S\ref{subsection:wondcpt}).  In this case, unlike the classical presentation $A^\bullet_{FY}(M)$, the generators of our presentation $\myChow^\bullet(M)$ represent \emph{base-point-free} divisor classes on $Y_{\mathscr R(M)}$, obtained as pullbacks of base-point-free divisor classes on $X_{A_n}$ that generate $A^\bullet(X_{A_n})$ (see \S\ref{subsection:nefpresentation}).  We demonstrate that even when $M$ is not necessarily realizable, the generators of $\myChow^\bullet(M)$ display behaviors analogous to those of base-point-free divisors on the projective varieties.

\subsection{Main results}

\subsubsection*{\textnormal{(i)} Simplicial generators and their monomials as matroid quotients}

The \textbf{principal truncation} of a matroid $M$ by a flat $F$ is a new matroid with bases
$
\{B\setminus f \colon B \textnormal{ a basis of }M \textnormal{ and } f\in B\cap F \neq \emptyset\}.
$
When $M$ is realizable, so that $h_F \in \myChow^1(M)$ represents a base-point-free divisor class on a projective variety $Y_{\mathscr R(M)}$, the principal truncation by $F$ corresponds to any general hyperplane pullback of the map defined by $h_F$ (Remarks \ref{rem:geomhyper} \& \ref{rem:simplicialpullback}).
For an arbitrary matroid, the following analogue holds.

\newtheorem*{thm:hyper}{Theorem \ref{thm:hyper}}
\begin{thm:hyper}
Let $M$ be a loopless matroid.  The simplicial generator $h_F\in \myChow^\bullet(M)$ correspond, via the cap product (see \S\ref{subsection:ChowMW} for a definition), to the principal truncation of $M$ by the flat $F$.
\end{thm:hyper}

We use this interpretation of the simplicial generators to study their monomials in $\myChow^\bullet(M)$.  A monomial basis for $\myChow^\bullet(M)$ is obtained by carrying over a Gr\"obner basis computation for the classical presentation $A_{FY}^\bullet(M)$ in \cite{FY04} (Proposition \ref{prop:gbH} \& Corollary \ref{cor:monombasis}).  This monomial basis, which we call the \textbf{nested basis}, now has the following combinatorial interpretation.

\newtheorem*{dfn:relnestedquot}{Definition \ref{dfn:relnestedquot}}
\begin{dfn:relnestedquot}
Let $M'$ be a matroid quotient of $M$ (that is, every flat of $M'$ is a flat of $M$), denoted $f: M'\twoheadleftarrow M$.  Write $n_f(S) := \operatorname{rk}_M(S) - \operatorname{rk}_{M'}(S)$ for a subset $S$ of the ground set.  An \textbf{$f$-cyclic flat} of $f$ is a flat $F$ of $M'$ such that $F$ is minimal (with respect to inclusion) among the flats $G$ of $M'$ with $n_f(G) = n_f(F)$.  The matroid $M'$ is a \textbf{relative nested quotient} of $M$ if the $f$-cyclic flats form a chain.
\end{dfn:relnestedquot}

\newtheorem*{thm:monomnested}{Theorem \ref{thm:monomnested}}
\begin{thm:monomnested}
There is a natural bijection, via the cap product, between the nested basis of $\myChow^\bullet(M)$ and the set of relative nested quotients of $M$.
\end{thm:monomnested}

When the matroid $M$ is a Boolean matroid, i.e.\ a matroid whose ground set is a basis, the relative nested quotients of $M$ are known as ``nested matroids,'' studied previously in the context of Chow rings of permutohedral varieties in \cite{Ham17}.

\subsubsection*{\textnormal{(ii)} The Poincar\'e duality property} We build upon the bijection in Theorem \ref{thm:monomnested} to 
establish the following theorem, which mirrors the fact that for certain classes of varieties,\footnote{These include varieties on which rational and numerical equivalence coincide, which is a feature shared by all varieties that inspire the combinatorics of this paper.  See the first footnote in \S\ref{subsection:ChowMW}.} if $A^\bullet(X) \to A^\bullet(Y)$ is surjective for $Y\subset X$, then $A^\bullet(Y) \simeq A^\bullet(X) / \operatorname{ann}([Y])$ where $\operatorname{ann}([Y]) = \{\xi\in A^\bullet(X) \mid \xi \cdot [Y] = 0\}$.

\newtheorem*{thm:transport}{Theorem \ref{thm:transport}}
\begin{thm:transport}
Let $M$ be a loopless matroid on $\{0,1, \ldots, n\}$, and $X_{A_n}$ the permutohedral variety of dimension $n$.  Let $\Delta_M$ be the Bergman class of the matroid $M$ considered as an element of $A^\bullet(X_{A_n})$.  Then $A^\bullet(M) \simeq A^\bullet(X_{A_n}) / \operatorname{ann}(\Delta_M)$, where $\operatorname{ann}(\Delta_M) = \{\xi \in A^\bullet(X_{A_n}) \mid \xi \cdot \Delta_M = 0\}.$
\end{thm:transport}

The Poincar\'e duality property for $\myChow^\bullet(M)$ was established in \cite[Theorem 6.19]{AHK18} by a double induction.  In our case, the Poincar\'e duality property is a simple algebraic consequence of \Cref{thm:transport} (\Cref{cor:poincare}).

\subsubsection*{\textnormal{(iii)} The volume polynomial and its log-concavity} The following formula computes the intersection numbers of the simplicial generators $h_F$.

\newtheorem*{thm:dhr}{Theorem \ref{thm:dhr}}
\begin{thm:dhr}
For $M$ a loopless matroid of rank $r=d+1$, let $\int_M: A^d(M) \to \RR$ be the degree map.  For a multiset of nonempty flats $\{F_1, \ldots, F_d\}$, we have
$$\int_M h_{F_1} \cdots h_{F_d} = 
\begin{cases} 
1 & \textnormal{if $\operatorname{rk}_M(\bigcup_{j\in J} F_j) \geq |J| +1$ for every $\emptyset \subsetneq J\subseteq \{1, \ldots, d\}$}\\
0 & \textnormal{otherwise}.
\end{cases}$$
\end{thm:dhr}

%

That these intersection numbers are either 1 or 0 stands in stark contrast to calculations in the classical presentation of the Chow ring of a matroid, which were carried out in \cite{Eur20}.

\medskip
The intersection numbers collect together into the ``volume polynomial'' of $\myChow^\bullet(M)$, defined in the following corollary.  The corollary recovers \cite[Corollary 9.4]{Pos09} on volumes of generalized permutohedra when $M$ is set to be the Boolean matroid.

\newtheorem*{cor:volpolH}{Corollary \ref{cor:volpolH}}
\begin{cor:volpolH}
Let $M$ be a loopless matroid on $E$ of rank $d+1$.  The volume polynomial $VP_M^\nabla(\underline t)\in \QQ[t_F \ | \ F \subseteq E \textnormal{ nonempty flat in $M$}]$ of $\myChow^\bullet(M)$, defined as $VP_M^\nabla(\underline t) := \int_M \big(\sum_F t_F h_F\big)^d$, is
$$VP_M^\nabla(\underline t) = \sum_{(F_1, \ldots, F_d)} t_{F_1}\cdots t_{F_d}$$
where the sum is over ordered collections of flats $(F_1, \ldots, F_d)$ satisfying $\operatorname{rk}_M(\bigcup_{j\in J} F_j) \geq |J| +1$ for every $\emptyset \subsetneq J\subseteq \{1, \ldots, d\}$.
\end{cor:volpolH}

The volume polynomial $VP_M^\nabla$, as a real-valued function, is therefore positive in its positive orthant, as are volume polynomials of base-point-free divisors on projective varieties.  Moreover, it displays the log-concavity behavior described in \ref{obs:hodgenef}.


\newtheorem*{cor:VPlogconc}{\Cref{cor:VPlogconc}}
\begin{cor:VPlogconc}
As a real-valued function, the volume polynomial $VP_M^\nabla(\underline t)$ is log-concave in its positive orthant.
\end{cor:VPlogconc}
Log-concavity is a consequence of the following statement.
\newtheorem*{thm:VPLorentzian}{\Cref{thm:VPLorentzian}}
\begin{thm:VPLorentzian}
The volume polynomial $VP_M^\nabla(\underline t)$ of a loopless matroid $M$ is Lorentzian in the sense of \cite{BH20}.  
\end{thm:VPLorentzian}

Lorentzian polynomials are multivariate polynomials characterized by two conditions: one on their supports and one on their partial derivatives (see \S\ref{subsection:Lorentzian}).  We show that $VP_M^\nabla$ satisfies these two conditions by using \Cref{thm:dhr} to understand the support of $VP_M^\nabla$ (\Cref{prop:mconvex}) and by using \Cref{thm:hyper} to understand the partial derivatives of $VP_M^\nabla$ as volume polynomials of principal truncations of $M$.

\subsubsection*{\textnormal{(iv)} A simplified proof of the Hodge-Riemann relations in degree 1}

We use log-concavity of $VP_M^\nabla$ to give a simplified proof of the Hodge-Riemann relations in degree 1 for Chow rings of matroids:

%

\newtheorem*{thm:HRone}{Theorem \ref{thm:HRone}}
\begin{thm:HRone}
Let $M$ be a loopless matroid of rank $r = d+1$ on a ground set $E$, and $\ell\in A^1(M)$ a combinatorially ample divisor class (see \S\ref{subsection:chowring}).  Then the Chow ring $A^\bullet(M)$ with the degree map $\int_M$ satisfies the K\"ahler package in degree zero and one.  That is, for $i\leq 1$,
\begin{itemize}
\item[($\operatorname{HL}^{\leq1}$)] (hard Lefschetz in degree $\leq1$) the multiplication by $\ell$
$$L_\ell^i : A^i(M) \to A^{d-i}(M),\ a \mapsto \ell^{d-2i}a$$
is an isomorphism, and
\item[($\operatorname{HR}^{\leq1}$)] (Hodge-Riemann relations in degree $\leq1$) the symmetric form
$$(-1)^iQ_\ell^i: A^i(M) \times A^i(M) \to \RR, \ (x,y) \mapsto (-1)^i\int_M xy\ell^{d-2i}$$
is non-degenerate on $A^i(M)$ and positive-definite when restricted to the degree $i$ primitive space $P_\ell^i := \{z\in A^i(M) : \ell^{d-2i+1}z = 0\}$ of $\ell$.
\end{itemize}
\end{thm:HRone}

The authors of \cite{AHK18} establish the Hodge-Riemann relations in all degrees by a double-inductive argument that utilizes a generalization of Bergman fans of matroids.
In our case, the log-concavity of $VP_M^\nabla$ (\Cref{cor:VPlogconc}) provides us a key step in the induction in degrees $\leq 1$, and thus our proof of \Cref{thm:HRone} is a single induction on ranks of matroids and involves only the classical Bergman fans of matroids.

\medskip
The validity of the Hodge-Riemann relation in degree 1 accounts for all currently known combinatorial applications of \cite{AHK18}, including the Heron-Rota-Welsh conjecture.  Huh has posed the discovery of combinatorial applications for the Hodge-Riemann relations in higher degrees as an open problem \cite{Huh18a}.

\subsection*{Organization}\label{subsec:structure}
In \Cref{section:prelim}, we develop the necessary background on Chow rings of matroids.  In \Cref{section:nefpresentation}, we introduce the simplicial presentation of the Chow ring, study its structural properties via principal truncations, and introduce the relative nested quotients.  In Section \ref{section:poincare}, we establish the Poincar\'e duality property for matroids.  In Section \ref{section:volumepolynomial}, we calculate intersection numbers with respect to the simplicial presentation, and prove that the volume polynomial of the Chow ring in the simplicial presentation is Lorentzian.  Finally, in Section \ref{section:HIT} we apply our results to give a simplified proof of the Hodge theory in degree 1 for matroids.

\subsection*{Acknowledgements}
We are grateful to Vic Reiner and Federico Castillo for introducing the authors.  The second author thanks June Huh for helpful conversations about Lorentzian polynomials.  The first author was supported by a Zuckerman Postdoctoral Scholarship.  We thank Alex Fink for suggesting some minor corrections, and we thank Matt Larson for pointing out a minor error in Proposition~\ref{prop:DHRmatching} and in the proof of Proposition~\ref{prop:linindep}.

\section{Preliminaries}\label{section:prelim}

In this section, we review relevant background materials on Chow rings of matroids.  Familiarity with toric and tropical geometry, from which the combinatorial notions here originate, can be helpful but not necessary.  As references we point to \cite{Ful93} and \cite{CLS11} for toric geometry, and to \cite{FS97}, \cite[Chapter 6]{MS15}, and \cite[\S 4--\S 5]{AHK18} for tropical geometry.

\smallskip
In \S\ref{subsection:ChowMW}, we describe Chow cohomology rings and Minkowski weights of fans, and in \S\ref{subsection:chowring} we illustrate these notions in the setting of matroids.  These first two subsections are purely combinatorial.
In \S\ref{subsection:wondcpt}, we provide the underlying geometric picture that motivates many of the combinatorial constructions.

\subsection{Chow cohomology rings and Minkowski weights}\label{subsection:ChowMW}

We give a brief account of Chow cohomology rings and Minkowski weights of smooth fans, which are combinatorial analogues of cohomology rings and homology classes of algebraic varieties\footnote{We use real coefficients for Chow cohomology rings and Minkowski weights, although Chow rings of algebraic varieties initially take integral coefficients.  The algebraic varieties that motivate the constructions here---smooth complete toric varieties and wonderful compactifications---share the feature that the Chow ring, the integral cohomology ring, and the ring of algebraic cycles modulo numerical equivalence all coincide \cite[Appendix C.3.4]{EH16}.  In particular, their Chow rings are torsion-free.  In this paper, while most of our arguments work over $\ZZ$, we will always work over $\RR$ for convenience.}.

\medskip
We set the following notation and definitions for rational fans over a lattice.
\begin{itemize}
\item Let $N$ be a lattice of rank $n$, and $N^\vee$ the dual lattice. We write $N_\RR := N\otimes_\ZZ \RR$.
\item For $\Sigma\subset N_\RR$ a rational fan, let $\Sigma(k)$ be the set of $k$-dimensional cones of $\Sigma$.
\item For a ray $\rho \in \Sigma(1)$, write $u_\rho \in N$ for the \textbf{primitive ray vector} that generates $\rho \cap N$.
\item A fan $\Sigma$ is \textbf{smooth} if, for all cones $\sigma$ of $\Sigma$, the set of primitive ray vectors of $\sigma$ can be extended to a basis of $N$.  A smooth fan is \textbf{simplicial} in that every $k$-dimensional cone is generated by $k$ rays.
\item A fan $\Sigma$ is said to be \textbf{complete} if its support $|\Sigma|$ is equal to $N_\RR$.
\end{itemize}

\proof[Convention] Throughout this section, we assume that $\Sigma \subset N_\RR$ is a smooth fan of dimension $d$, which is not necessarily complete.

\begin{defn}
The \textbf{Chow cohomology ring}
$A^\bullet(\Sigma)$ of $\Sigma$ is a graded $\RR$-algebra
\[
A^\bullet(\Sigma) := \frac{\RR[x_\rho: \rho\in \Sigma(1)]}{\Big\langle \prod_{\rho\in S} x_\rho \ \left| \ \textnormal{$S\subseteq \Sigma(1)$ do not form a cone in $\Sigma$}\Big\rangle\right. + \Big\langle \sum_{\rho}  m(u_\rho) x_\rho \ \left| \ m \in N^\vee \Big\rangle\right.}.
\]
\end{defn}


Geometrically, the ring $A^\bullet(\Sigma)$ is the Chow ring $A^\bullet(X_\Sigma)$ of the toric variety $X_\Sigma$ associated to the fan $\Sigma$.  See \cite[\S10.1]{Dan78} for the case where $\Sigma$ is complete, and \cite{BDCP90} or \cite{Bri96} for the general case.  From this geometric description of $A^\bullet(\Sigma)$, or directly from the algebraic definition above, one can check that $A^\ell(\Sigma) = 0$ unless $0\leq \ell \leq d$.

\medskip
We call a linear combination $\sum_{\rho} c_\rho x_\rho \in \RR[x_\rho: \rho \in \Sigma(1)]$ of the variables $x_\rho$ a \textbf{divisor} on $\Sigma$.  Divisors of special interest in algebraic geometry are ample and nef divisors\footnote{Base-point-free divisor classes are nef, and on toric varieties nef conversely implies base-point-free \cite[Theorem 6.3.12]{CLS11}.  In agreement with the terminology of \cite{AHK18}, we will call certain divisors ``combinatorially nef'' (see \S\ref{subsection:chowring}) although they are furthermore base-point-free on wonderful compactifications of realizable matroids.}.  They have the following combinatorial description for a complete fan $\Sigma$ (i.e.\ a complete toric variety $X_\Sigma$).

\smallskip
A divisor $D = \sum_{\rho \in \Sigma(1)} c_\rho x_\rho$ on a complete fan $\Sigma$ defines a piecewise-linear function $\varphi_D: N_\RR \to \RR$, determined by being linear on each cone of $\Sigma$ with $\varphi_D(u_\rho) = c_\rho$.  We say that $D$ is a \textbf{nef divisor} if $\varphi_D$ is a convex function on $N_\RR$, that is, $\varphi_D(u) + \varphi_D(u') \geq \varphi(u+u')$ for all $u,u'\in N_\RR$.  If further the inequalities $\varphi_D(u) + \varphi_D(u') \geq \varphi(u+u')$ are strict whenever $u$ and $u'$ are not in a common cone of $\Sigma$, we say that $D$ is \textbf{ample}.
Nef (resp.\ ample) divisors on $\Sigma$ correspond to polytopes in $N_\RR^\vee$ whose outer normal fans coarsen (resp.\ equal) $\Sigma$.

\begin{thm}\cite[Theorems 6.1.5--6.1.7]{CLS11}\label{thm:nefdef}
Let $\Sigma$ be a smooth complete fan.  A nef divisor $D = \sum_{\rho \in \Sigma(1)} c_\rho x_\rho$ on $\Sigma$ defines a polytope $P_D \subset N_\RR^\vee$ by
$$P_D := \{m \in N^\vee_\RR \ | \ m(u_\rho) \leq c_\rho \ \forall \rho \in \Sigma(1)\},$$
whose outer normal fan coarsens $\Sigma$.  Conversely, such polytope $P \subset N^\vee_\RR$ defines a nef divisor
\[
D_P := \sum_{\rho \in \Sigma(1)} \max\{m(u_\rho) \ | \ m\in P\} x_\rho.
\]
A nef divisor $D$ is ample if the outer normal fan of $P_D$ is equal to $\Sigma$.
%
%
\end{thm}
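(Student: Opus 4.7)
The plan is to translate between the piecewise-linear function $\varphi_D$ on $N_\RR$ and the polytope $P_D \subset N_\RR^\vee$ by exploiting the fact that, on each maximal cone $\sigma$ of $\Sigma$, $\varphi_D$ is given by a single linear functional $m_\sigma \in N_\RR^\vee$.

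First I would use smoothness to define $m_\sigma$ for each $\sigma \in \Sigma(n)$: the primitive ray vectors $\{u_\rho : \rho \subset \sigma\}$ form a $\ZZ$-basis of $N$, so there is a unique $m_\sigma \in N^\vee$ with $m_\sigma(u_\rho) = c_\rho$ for every ray $\rho$ of $\sigma$, and by construction $\varphi_D|_\sigma = m_\sigma|_\sigma$. The convexity of $\varphi_D$ (the nef hypothesis), combined with equality on $\sigma$, forces the global inequality $m_\sigma \leq \varphi_D$ on $N_\RR$, and in particular $m_\sigma(u_{\rho'}) \leq c_{\rho'}$ for every ray $\rho' \in \Sigma(1)$. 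Hence $m_\sigma \in P_D$.

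Next, I would identify the $m_\sigma$ with the vertices of $P_D$ and read off the normal-fan structure. For $u$ in the relative interior of $\sigma$, writing $u = \sum a_\rho u_\rho$ with $a_\rho \geq 0$ and using $m(u_\rho) \leq c_\rho$ for each $m \in P_D$ shows $m(u) \leq m_\sigma(u)$; hence $m_\sigma$ maximizes the linear functional $u$ over $P_D$, exhibiting $m_\sigma$ as a vertex and placing $\sigma$ inside its normal cone. Conversely, every vertex of $P_D$ maximizes some generic $u \in N_\RR$, and by completeness of $\Sigma$ such $u$ lies in some $\sigma$, so every vertex is of the form $m_\sigma$. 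The outer normal fan of $P_D$ therefore coarsens $\Sigma$, with two adjacent maximal cones merging precisely when their $m_\sigma$'s coincide. This is exactly failure of strict convexity across the common wall, so the ample case---strict convexity across every wall---is the case where the outer normal fan of $P_D$ equals $\Sigma$. For the converse direction, given a polytope $P \subset N_\RR^\vee$ whose outer normal fan coarsens $\Sigma$, the support function $h_P(u) = \max_{m \in P} m(u)$ is globally convex, and the coarsening hypothesis guarantees it is linear on each cone of $\Sigma$. Setting $c_\rho = h_P(u_\rho)$ recovers $D_P$, and $\varphi_{D_P}$ agrees with $h_P$, so $\varphi_{D_P}$ is convex; i.e.\ $D_P$ is nef.

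The main obstacle is the normal-fan bookkeeping in the second step: carefully verifying that every vertex of $P_D$ arises as some $m_\sigma$, and that the normal cone to $m_\sigma$ is exactly the union of those $\sigma' \in \Sigma(n)$ with $m_{\sigma'} = m_\sigma$. This uses completeness of $\Sigma$ essentially (so that every direction $u$ lies in some maximal cone) and promotes the inclusion $\{m_\sigma\} \subseteq \mathrm{Vertices}(P_D)$ to an equality, at which point the coarsening/equality dichotomy between nef and ample falls out by comparing adjacent $m_\sigma$'s.
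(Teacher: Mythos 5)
The paper itself gives no proof of this theorem; it is cited verbatim to \cite[Theorems 6.1.5--6.1.7]{CLS11}. Your argument is correct and is essentially the standard one that appears in that reference: use smoothness to define the Cartier data $m_\sigma$ on each maximal cone, use convexity of $\varphi_D$ to show $m_\sigma \in P_D$, show each $m_\sigma$ is the unique maximizer of any $u$ in the interior of $\sigma$ (hence a vertex whose normal cone contains $\sigma$), and use completeness to show every vertex arises this way. The one small point you leave implicit is why $P_D$ is actually bounded (so that it is a polytope in the first place): this also follows from completeness, since $\max_{m \in P_D} m(u) = m_\sigma(u) < \infty$ for $u$ in any maximal cone $\sigma$, and every $u$ lies in some maximal cone. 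With that remark supplied, the proof is complete and matches the cited source.
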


A divisor $D$ defines an element $[D] \in A^1(\Sigma)$, which we call the \textbf{divisor class} (of $D$) on $\Sigma$.
We say that a divisor class $\zeta\in A^1(\Sigma)$ is \textbf{nef} (resp.\ \textbf{ample}) if any choice of a divisor $D$ representing $\zeta$ is nef (resp.\ ample).  This is well-defined because two divisors $D$ and $D'$ represent the same divisor class if and only if $\varphi_D - \varphi_{D'}$ is a linear function on $N_\RR$.  In terms of polytopes, two nef divisors $D$ and $D'$ define the same divisor class in if and only if $P_D$ and $P_{D'}$ are parallel translates.  

\begin{rem}\label{rem:nefeff}
Any nef divisor class $[D]\in A^1(\Sigma)$ is \textbf{effective}; that is, it can be written as non-negative linear combination $D = \sum_{\rho\in \Sigma(1)} c_\rho x_\rho$ (with $c_\rho \geq 0 \ \forall \rho \in \Sigma(1)$).  This is an immediate consequence of Theorem \ref{thm:nefdef}:  Given a nef divisor $D$, translating if necessary one can assume that the polytope $P_D$ contains the origin in its relative interior.
\end{rem}

With $A^\bullet(\Sigma)$ as an analogue of a cohomology ring, we describe  analogues of homology groups.

\begin{defn}
An $\ell$-dimensional \textbf{Minkowski weight} on $\Sigma$ is a function $\Delta: \Sigma(\ell) \to \RR$ such that for each $\tau\in \Sigma(\ell-1)$, the function $\Delta$ satisfies the balancing condition
\[
\sum_{\substack{\sigma \supset \tau \\ \sigma \in\Sigma(\ell)}} \Delta(\sigma) u_{\sigma\setminus\tau} \in \operatorname{span}_\RR(\tau),
\]
where ${\sigma\setminus\tau}$ denotes the unique ray of $\sigma$ that is not in $\tau$.  The \textbf{support} of $\Delta$, denoted $|\Delta|$, is the union of cones $\sigma\in \Sigma(\ell)$ such that $\Delta(\sigma) \neq 0$.  We write $\operatorname{MW}_\ell(\Sigma)$ for the group (under addition) of $\ell$-dimensional Minkowski weights on $\Sigma$.
\end{defn}

The groups of Minkowski weights are analogues of homology groups because they are dual to the Chow cohomology ring in the following way.

\begin{lem}\label{lem:kronecker}\cite[Theorem 6.7.5]{MS15}\footnote{Currently \cite[Theorem 6.7.5]{MS15} has a typo---it is missing $\operatorname{Hom}(\cdot, \ZZ)$.  The statement here was made implicitly in \cite{FMSS95}, and follows the notation of \cite[Proposition 5.6]{AHK18}.} For $0 \leq \ell \leq d$, we have an isomorphism
\[
t_\Sigma: \operatorname{MW}_\ell(\Sigma) \overset\sim\to \operatorname{Hom}(A^\ell(\Sigma),\ZZ), \quad \textnormal{determined by } \Delta\mapsto \Big( (\textstyle \prod_{\rho\in \sigma(1)}x_\rho) \mapsto \Delta(\sigma)\Big).
\]
\end{lem}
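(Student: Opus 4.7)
The plan is to verify that $t_\Sigma$ is a well-defined isomorphism by constructing its inverse on the nose: given $\phi \in \operatorname{Hom}(A^\ell(\Sigma), \ZZ)$, set $\Delta_\phi(\sigma) := \phi(x_\sigma)$ for $\sigma \in \Sigma(\ell)$, where $x_\sigma := \prod_{\rho \in \sigma(1)} x_\rho$. The three things to check are: (a) $\Delta_\phi$ is balanced; (b) the map $\phi \mapsto \Delta_\phi$ is injective, i.e.\ the squarefree monomials $x_\sigma$ with $\sigma \in \Sigma(\ell)$ span $A^\ell(\Sigma)$; and (c) every Minkowski weight arises in this way, i.e.\ the assignment $x_\sigma \mapsto \Delta(\sigma)$ on these spanning elements extends to a well-defined homomorphism $A^\ell(\Sigma) \to \ZZ$.

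For (a), fix $\tau \in \Sigma(\ell-1)$ and choose any $m \in N^\vee$ vanishing on $\operatorname{span}_\RR(\tau)$. The linear relation $\sum_\rho m(u_\rho) x_\rho = 0$ in $A^1(\Sigma)$, multiplied by $x_\tau$, produces $\sum_\rho m(u_\rho)\, x_\rho\, x_\tau = 0$ in $A^\ell(\Sigma)$. Terms with $\rho \in \tau(1)$ vanish since $m(u_\rho) = 0$, and terms where $\{\rho\} \cup \tau(1)$ does not span a cone vanish by the Stanley--Reisner relations. What survives is indexed by $\ell$-cones $\sigma \succ \tau$ with $\rho = \sigma \setminus \tau$, yielding $\sum_{\sigma \succ \tau} m(u_{\sigma\setminus\tau}) \Delta_\phi(\sigma) = 0$. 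Varying $m$ over $\tau^\perp \subseteq N^\vee$ gives the balancing condition.

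For (b), the spanning claim is the main technical point. Any degree-$\ell$ monomial $x^a = \prod_\rho x_\rho^{a_\rho}$ that is nonzero in $A^\ell(\Sigma)$ has support $S := \{\rho : a_\rho > 0\}$ contained in some cone $\sigma_0 \in \Sigma$. If $|S| = \ell$, then $x^a = x_{\sigma_0}$ and we are done. Otherwise some $\rho_0 \in S$ has $a_{\rho_0} \geq 2$, and smoothness of $\Sigma$ provides $m \in N^\vee$ with $m(u_{\rho_0}) = 1$ and $m(u_\rho) = 0$ for $\rho \in \sigma_0(1) \setminus \{\rho_0\}$. Using $\sum_\rho m(u_\rho) x_\rho = 0$ to rewrite $x_{\rho_0}$ and multiplying by $x^a/x_{\rho_0}$ expresses $x^a$ as a sum of monomials whose supports each include a new ray $\rho \notin \sigma_0(1)$ (others being killed by Stanley--Reisner). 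A complexity measure such as $(|\sigma_0|, \sum_\rho a_\rho(a_\rho -1))$ decreases in the lexicographic order, giving termination by induction, and expressing $x^a$ as an integer combination of the $x_\sigma$.

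For (c), starting from a balanced $\Delta$, one defines $\phi$ on the Stanley--Reisner ring by first assigning $\phi(x_\sigma) := \Delta(\sigma)$ on squarefree monomials indexed by $\Sigma(\ell)$, and extending to non-squarefree monomials by the reduction procedure of step (b); the balancing condition is precisely what is needed to see that this extension is independent of the chain of reductions chosen (the ambiguity in reducing $x^a$ comes from which linear relation to apply, and two such choices differ by a relation of exactly the balancing type tested against some $\tau \prec \sigma_0$). Compatibility with Stanley--Reisner relations is automatic by construction, and compatibility with the remaining linear relations $\sum_\rho m(u_\rho) x_\rho$ reduces to the balancing argument of step (a) run in reverse. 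The main obstacle is step (b): keeping the reduction well-organized so that termination and uniqueness are visible, which is where the smoothness hypothesis on $\Sigma$ (that primitive ray vectors of every cone extend to a lattice basis of $N$) is indispensable.
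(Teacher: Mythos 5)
The paper does not prove this lemma; it is imported verbatim (modulo a typo fix) from \cite[Theorem 6.7.5]{MS15}, with \cite{FMSS95} cited for provenance. So your sketch is supplying an argument the paper elects not to give. The architecture you chose -- construct the inverse of $t_\Sigma$ directly and verify (a) balancing, (b) spanning of $A^\ell(\Sigma)$ by the squarefree monomials $x_\sigma$ with $\sigma \in \Sigma(\ell)$, and (c) well-definedness / surjectivity -- is the right one and matches the proof in the cited references in spirit. Step (a) is correct as written: choosing $m$ to vanish on $\tau$ ensures the terms $x_\rho x_\tau$ with $\rho \in \tau(1)$ carry zero coefficient, and the Stanley--Reisner relations eliminate everything else not indexed by a cone $\sigma \succ \tau$.

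Two points need repair. In (b), the complexity measure $(|\sigma_0|, \sum_\rho a_\rho(a_\rho-1))$ does \emph{not} decrease in lexicographic order: after substituting $x_{\rho_0} \equiv -\sum_{\rho \notin \sigma_0(1)} m(u_\rho)\, x_\rho$, each surviving new monomial has support $S \cup \{\rho'\}$ with $\rho' \notin \sigma_0(1)$ (the exponent of $\rho_0$ drops to $a_{\rho_0}-1 \geq 1$ so $\rho_0$ stays in the support), hence the minimal supporting cone grows and $|\sigma_0|$ \emph{increases}. You should drop the first coordinate; the quantity $\sum_\rho a_\rho(a_\rho - 1)$ alone strictly decreases and suffices for termination. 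In (c), the assertion that the balancing condition is ``precisely what is needed'' for the reduction output to be independent of choices is the substantive content of the theorem, not a routine check. It amounts to knowing that the $\ZZ$-relations among $\{x_\sigma : \sigma \in \Sigma(\ell)\}$ inside $A^\ell(\Sigma)$ are generated by the relations $\sum_{\sigma \succ \tau} m(u_{\sigma\setminus\tau})\, x_\sigma = 0$ with $\tau \in \Sigma(\ell-1)$ and $m$ annihilating $\tau$. Tracking the ambiguity across two applications of your reduction procedure does not show that \emph{all} such relations are of this form, and this is exactly where \cite{FMSS95} and \cite{MS15} do real work; a confluence argument needs to be set up and closed. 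As written, (c) is a gesture at the argument rather than a proof of it, so the sketch has a genuine gap there.
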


This isomorphism is an analogue of the Kronecker duality map in algebraic topology.  We use it to define combinatorial analogues of some standard operations in algebraic topology.  
We define the \textbf{cap product} by
$$A^k(\Sigma) \times \operatorname{MW}_\ell(\Sigma) \to \operatorname{MW}_{\ell-k}(\Sigma), \quad (\xi, \Delta) \mapsto \xi \cap \Delta := \Big(\sigma \mapsto (t_\Sigma\Delta)(\xi\cdot \textstyle \prod_{\rho \in \sigma(1)}x_\rho)\Big),$$
which makes $\operatorname{MW}_\bullet(\Sigma)$ into a graded $A^\bullet(\Sigma)$-module.
When $\Sigma$ satisfies $\operatorname{MW}_d(\Sigma) \simeq \RR$, the \textbf{fundamental class} $\Delta_\Sigma$ is defined as its generator (unique up to scaling), and the cap product with the fundamental class defines the map
$$\delta_\Sigma: A^\bullet(\Sigma) \to \operatorname{MW}_{d-\bullet}(\Sigma), \quad \xi\mapsto \xi\cap\Delta_\Sigma.$$
In particular, noting that $\operatorname{MW}_0(\Sigma) = \RR$, the \textbf{degree map}
is defined as
\[
\int_\Sigma: A^d(\Sigma) \to \RR, \quad \xi\mapsto \xi\cap \Delta_\Sigma.
\]

If $\Sigma$ is complete, one can check that $\operatorname{MW}_n(\Sigma) \simeq \RR$, where the fundamental class $\Delta_\Sigma$ is given by $\Delta_\Sigma(\sigma) = 1$ for each cone $\sigma \in \Sigma(n)$.  In this case, we have the following analogue of the Poincar\'e duality theorem in algebraic topology.

\begin{thm}\label{thm:FS}\cite[Theorem 3.1, Proposition 4.1.(b), Theorem 4.2]{FS97}
For $\Sigma$ a smooth complete fan, the cap product with the fundamental class $\Delta_\Sigma$
\[
\delta_\Sigma: A^k(\Sigma) \overset\sim\to \operatorname{MW}_{n-k}(\Sigma), \quad \xi \mapsto \xi \cap \Delta_\Sigma
\]
is an isomorphism for each $0\leq k\leq n$.
Equivalently (by \Cref{lem:kronecker}), the pairing
\[
A^k(\Sigma) \times A^{n-k}(\Sigma) \to \RR, \quad (a, a') \mapsto \int_\Sigma a a'
\]
is non-degenerate for each $0 \leq k \leq n$.
\end{thm}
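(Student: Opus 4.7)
The plan is to reduce the theorem to the non-degeneracy of the intersection pairing via \Cref{lem:kronecker}, and then to invoke the classical Poincar\'e duality for smooth complete toric varieties.

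First I would verify that $\operatorname{MW}_n(\Sigma)$ is one-dimensional, generated by the weight assigning $1$ to every top-dimensional cone. For $\Sigma$ smooth and complete, each wall $\tau\in\Sigma(n-1)$ is shared by exactly two top cones $\sigma_1,\sigma_2$, and smoothness furnishes an integral wall relation showing that $u_{\sigma_1\setminus\tau}$ and $u_{\sigma_2\setminus\tau}$ are negatives of one another modulo $\operatorname{span}(\tau)$. The balancing condition at $\tau$ therefore reduces to $\Delta(\sigma_1)=\Delta(\sigma_2)$, so any $\Delta\in\operatorname{MW}_n(\Sigma)$ is constant on the wall-adjacency graph of top cones; the connectedness of this graph (a consequence of completeness, since $|\Sigma|=N_\RR$ is connected and top cones cover it) forces $\Delta$ to be a scalar multiple of the constant-one weight $\Delta_\Sigma$. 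This pins down the fundamental class and the degree map $\int_\Sigma\colon A^n(\Sigma)\to\RR$, which sends each top-cone monomial $\prod_{\rho\in\sigma(1)}x_\rho$ to $1$.

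Next, the equivalence of the two formulations of the theorem follows from a routine unraveling of the cap product. For $\xi\in A^k(\Sigma)$ and $\eta\in A^{n-k}(\Sigma)$, the definitions give $t_\Sigma(\xi\cap\Delta_\Sigma)(\eta)=t_\Sigma(\Delta_\Sigma)(\xi\cdot\eta)=\int_\Sigma \xi\eta$. In other words, $t_\Sigma\circ\delta_\Sigma$ is exactly the adjoint $A^k(\Sigma)\to\operatorname{Hom}(A^{n-k}(\Sigma),\RR)$ of the degree pairing; since $t_\Sigma$ is an isomorphism by \Cref{lem:kronecker}, the bijectivity of $\delta_\Sigma$ is equivalent to the non-degeneracy of the pairing in each degree. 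This reduces the theorem to a single statement.

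The substantive content is the non-degeneracy of the pairing, which is the combinatorial incarnation of Poincar\'e duality for the smooth complete toric variety $X_\Sigma$. Under the identification $A^\bullet(\Sigma)\cong A^\bullet(X_\Sigma)$ of \cite[\S10.1]{Dan78}, this pairing coincides with the cohomological cup-product pairing on $X_\Sigma$, which is perfect because $X_\Sigma$ is smooth and complete. The main obstacle in a purely combinatorial derivation (as carried out in \cite{FS97}) is exactly this step: one first proves it in the projective case by invoking hard Lefschetz for an ample class $\ell\in A^1(\Sigma)$ (which is reducible to Khovanskii--Teissier inequalities on the polytope associated to $\ell$), and then extends it to arbitrary smooth complete $\Sigma$ by choosing a smooth projective refinement $\Sigma'$ and transferring non-degeneracy along the proper birational toric morphism $X_{\Sigma'}\to X_\Sigma$ via compatibility of the cap product with proper pushforward together with the fact that pushforward sends $\Delta_{\Sigma'}$ to $\Delta_\Sigma$.
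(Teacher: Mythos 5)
Your argument is correct, but it takes a genuinely different and considerably heavier route than the cited proof, and it mischaracterizes what \cite{FS97} actually does. The paper does not prove this theorem; it cites \cite{FS97}, whose argument does \emph{not} pass through hard Lefschetz or Khovanskii--Teissier. The Fulton--Sturmfels route is: (1) for any smooth variety $X$, cap product with the fundamental class gives an isomorphism $A^k_{\mathrm{op}}(X) \xrightarrow{\sim} A_{n-k}(X)$ between operational Chow cohomology and Chow homology --- this is a formal feature of smoothness in Fulton's intersection theory and requires no positivity input; and (2) for any \emph{complete} toric variety, Kronecker duality $A_{n-k}(X) \simeq \operatorname{Hom}(A^{n-k}_{\mathrm{op}}(X), \ZZ)$ holds, which is precisely \Cref{lem:kronecker} (due to \cite{FMSS95}) and is established by a direct computation with Chow groups of toric varieties, not via any Lefschetz-type theorem. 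Composing (1) and (2) gives the non-degenerate pairing. Your first two paragraphs (identification of $\operatorname{MW}_n(\Sigma) \simeq \RR$ via the wall relations and the adjoint identity $t_\Sigma(\xi \cap \Delta_\Sigma)(\eta) = \int_\Sigma \xi\eta$) are correct and match the intended reduction via \Cref{lem:kronecker}. The divergence is in the third paragraph: invoking hard Lefschetz for projective toric varieties and then transferring along a smooth projective refinement is a valid alternate proof over $\RR$ (and is closer in spirit to the McMullen/line-shelling approach the paper mentions in a later remark), but it imports substantially more machinery than the elementary smoothness-plus-Kronecker-duality argument that \cite{FS97} actually gives --- and you should not attribute the Lefschetz route to \cite{FS97}.
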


The isomorphisms in \Cref{thm:FS} make $\bigoplus_{i=0}^n \operatorname{MW}_{n-i}(\Sigma)$ into a graded ring when $\Sigma$ is complete.
We write $\operatorname{MW}^\bullet(\Sigma) := \operatorname{MW}_{n-\bullet}(\Sigma)$ for this graded ring.
For complete fans, the resulting multiplication structure on the Minkowski weights is known as the \textbf{stable intersection}, denoted $\cap_{st}$, in tropical geometry.  We will only need a special case of stable intersections\footnote{See \cite{FS97}, \cite[\S3.6]{MS15}, or \cite{JY16} for a definition of stable intersections.  It may help to note the suggestiveness of the notations here---we have $\xi \cap \Delta = \delta_\Sigma(\xi) \cap_{st} \Delta$.}, which we provide explicitly in the context of matroids in \S\ref{subsection:matquot} (\Cref{prop:matprod}).

\medskip
In the proof of \Cref{lem:hyper}, we will need the following explicit description of the map $\delta_\Sigma: A^1(\Sigma) \overset{\sim}\to \operatorname{MW}_{n-1}(\Sigma)$ for nef divisor classes on a complete fan $\Sigma$.  It is familiar to tropical geometers as tropical hypersurfaces \cite[Proposition 3.3.2 \& Theorem 6.7.7]{MS15}.

\begin{prop}\label{prop:trophyper}
Let $D$ be a nef divisor on $\Sigma$ such that the corresponding polytope $P_D$ is a lattice polytope.  Then the Minkowski weight $\Delta_{P_D} := \delta_\Sigma([D]) \in \operatorname{MW}_{n-1}(\Sigma)$ given by \Cref{thm:FS} is defined by
\[
\Delta_{P_D}(\tau) =
\begin{cases}
\ell(P_D(\sigma)) & \textnormal{if there is $\sigma\in \Sigma_{P_D}(n-1)$ such that $|\tau| \subseteq |\sigma|$}\\
0 & \textnormal{otherwise}
\end{cases}\qquad \textnormal{for each $\tau\in \Sigma(n-1)$},
\]
where $P_D(\sigma)$ is the edge of $P_D$ corresponding to the cone $\sigma \in \Sigma_Q(n-1)$, and $\ell(P_D(\sigma))$ is its lattice length, i.e.\ the number of lattice points on $P_D(\sigma)$ minus one.
\end{prop}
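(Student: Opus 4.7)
The plan is to reduce to an intersection number computation via Kronecker duality, and then evaluate the resulting intersection numbers by replacing $D$ with a cohomologous representative tailored to each cone $\tau \in \Sigma(n-1)$, using the linear relations in $A^1(\Sigma)$ coming from the vertices of $P_D$.

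First, by \Cref{lem:kronecker}, the Minkowski weight $\Delta_{P_D} = \delta_\Sigma([D])$ is determined by the numbers $\int_\Sigma [D]\cdot \prod_{\rho\in\tau(1)}x_\rho$ as $\tau$ ranges over $\Sigma(n-1)$, so it suffices to show these equal $\ell(P_D(\sigma))$ when $|\tau|\subseteq|\sigma|$ for some $\sigma\in \Sigma_{P_D}(n-1)$, and vanish otherwise. The key move is that for any vertex $v$ of $P_D$, the divisor $D_v := \sum_\rho v(u_\rho)x_\rho$ represents $0\in A^1(\Sigma)$, so $[D] = [D-D_v]$ with non-negative coefficients $c_\rho - v(u_\rho)$ that vanish exactly for those $\rho$ with $u_\rho$ in the maximal cone $\eta_v\in\Sigma_{P_D}(n)$ dual to $v$ (since $v$ maximizes $m\mapsto m(u_\rho)$ over $m\in P_D$ precisely on $\eta_v$).

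Now split by where $\tau$ sits in the coarsening $\Sigma_{P_D}$. If $\tau$ lies in the interior of some maximal cone $\eta\in\Sigma_{P_D}(n)$ with dual vertex $v$, then the two rays $\rho_1,\rho_2$ extending $\tau$ to maximal cones $\tau+\rho_i$ of $\Sigma$ both lie in $\eta$ (locally around $\tau$, the fan $\Sigma$ refines $\eta$). Since the Stanley--Reisner relations kill every product $x_\rho\cdot\prod_{\rho'\in\tau(1)}x_{\rho'}$ unless $\rho\in\{\rho_1,\rho_2\}$, and for those $\rho$ we have $c_\rho - v(u_\rho) = 0$, the intersection number vanishes. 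If instead $\tau$ is contained in a wall $\sigma\in\Sigma_{P_D}(n-1)$ separating $\eta_1,\eta_2\in\Sigma_{P_D}(n)$ with dual vertices $v_1,v_2$, then $\rho_1\in\eta_1$ and $\rho_2\in\eta_2$; choosing $v=v_1$ kills the $\rho_1$ contribution, and using that $\tau+\rho_2\in\Sigma(n)$ is smooth so $\int_\Sigma\prod_{\rho\in(\tau+\rho_2)(1)}x_\rho = 1$, we are left with $(c_{\rho_2} - v_1(u_{\rho_2}))\cdot 1 = (v_2-v_1)(u_{\rho_2})$.

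The final step, and the only point requiring care, is to identify $(v_2-v_1)(u_{\rho_2})$ with the lattice length $\ell(P_D(\sigma))$. Since the edge $P_D(\sigma)$ connects $v_1$ to $v_2$ and lies in an affine translate of $\tau^\perp\subset N_\RR^\vee$, the vector $v_2-v_1$ is well-defined in $\tau^\perp\cap(N^\vee / (N^\vee\cap \tau^{\perp,\perp}))$; smoothness of $\tau+\rho_2$ ensures that the image of $u_{\rho_2}$ is a primitive generator of the rank-one lattice $N/(N\cap\operatorname{span}\tau)$, so pairing against $u_{\rho_2}$ reads off the lattice length. I expect this matching of primitivities coming from smoothness of $\Sigma$ with the lattice length in the edge direction of $P_D$ to be the main conceptual point; once it is in place, the two cases above give the claimed formula.
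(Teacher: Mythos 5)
The paper does not prove this proposition---it is cited to Maclagan--Sturmfels, who derive the claim via the duality between tropical hypersurfaces of tropical polynomials and Newton subdivisions. Your argument is instead a direct toric intersection-theory computation: normalize $D$ by subtracting the trivial divisor class $D_v$ attached to a well-chosen vertex $v$ of $P_D$, then evaluate against $\prod_{\rho\in\tau(1)}x_\rho$ using Stanley--Reisner vanishing together with $\int_\Sigma\prod_{\rho\in(\tau+\rho_2)(1)}x_\rho=1$. This is sound and arguably better adapted to the paper's Chow-cohomology-of-fans setup, since it avoids importing the tropical-polynomial formalism.

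Two minor cleanups are needed. First, the Stanley--Reisner relations do \emph{not} kill the terms indexed by $\rho\in\tau(1)$: the product $x_\rho\cdot\prod_{\rho'\in\tau(1)}x_{\rho'} = x_\rho^2\prod_{\rho'\in\tau(1)\setminus\{\rho\}}x_{\rho'}$ is in general nonzero, so the dichotomy ``killed by Stanley--Reisner unless $\rho\in\{\rho_1,\rho_2\}$'' is false as stated. Those terms drop for the same reason as the $\rho_1$ term: since $\tau\subseteq\eta$ (resp.\ $\tau\subseteq\sigma\subseteq\eta_1$), each $u_\rho$ with $\rho\in\tau(1)$ also lies in the maximal cone of $\Sigma_{P_D}$ dual to $v$, so its coefficient $c_\rho-v(u_\rho)$ vanishes. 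The right slogan is: every ray in the closed star of $\tau$ lies in the maximal cone dual to $v$ and hence has zero coefficient in $D-D_v$, while every ray outside the star contributes zero by Stanley--Reisner. Second, the expression $\tau^\perp\cap\bigl(N^\vee/(N^\vee\cap\tau^{\perp,\perp})\bigr)$ is not well-formed, since $\tau^{\perp\perp}$ sits in $N_\RR$, not $N^\vee_\RR$. What you want is simply that $v_2-v_1$ lies in the rank-one lattice $\tau^\perp\cap N^\vee$, which is saturated in $N^\vee$ because $N\cap\operatorname{span}(\tau)$ is a direct summand of $N$ (this uses smoothness of $\tau$). Combined with the primitivity of $u_{\rho_2}$ modulo $N\cap\operatorname{span}(\tau)$ (smoothness of $\tau+\rho_2$) and the positivity $c_{\rho_2}-v_1(u_{\rho_2})>0$, this does yield $(v_2-v_1)(u_{\rho_2})=\ell(P_D(\sigma))$, as you intend.
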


We end this subsection by noting the functoriality of the constructions here.  An inclusion of fans $\iota: \Sigma' \hookrightarrow \Sigma$ defines the \textbf{pullback map} $\iota^*$, which is a surjective map of graded rings
$$\iota^*: A^\bullet(\Sigma) \twoheadrightarrow A^\bullet({\Sigma'}), \quad x_\rho \mapsto
\begin{cases}
x_\rho & \textnormal{if $\rho\in \Sigma'(1)$}\\
0 & \textnormal{otherwise}.
\end{cases}$$
Comparing the presentations of $A^\bullet(\Sigma)$ and $A^\bullet(\Sigma')$, one checks easily that this map coincides with the quotient of $A^\bullet(\Sigma)$ by the ideal $\langle x_\rho \mid \rho \in \Sigma(1) \setminus \Sigma'(1)\rangle \subset A^\bullet(\Sigma)$.  Dually, a Minkowski weight $\Delta'$ on $\Sigma'$ is naturally a Minkowski weight on $\Sigma$.  In this case we often abuse the notation and write $\Delta'$ for both Minkowski weights.

\begin{rem}\label{rem:functorial}
Unraveling the definitions, one checks that the cap product is functorial in the following sense:  The pullback map $\iota^*: A^\bullet(\Sigma) \to A^\bullet(\Sigma')$ makes $\operatorname{MW}_\bullet(\Sigma')$ into a $A^\bullet(\Sigma)$-module.  Explicitly, if $\xi \in A^\bullet(\Sigma)$ and $\Delta'\in \operatorname{MW}_\ell(\Sigma')$, then $\iota^* \xi \cap \Delta' = \xi \cap \Delta'$, where $\Delta'$ on the right hand side is considered as a Minkowski weight on $\Sigma$.
\end{rem}

\subsection{Bergman classes and Chow rings of matroids} \label{subsection:chowring}

We now specialize our discussion to matroids.  We begin with the braid fan, on which matroids will arise as certain Minkowski weights.

\medskip
First, we fix some notations.  Let $E := \{0, 1, \ldots, n\}$, and for a subset $S\subseteq E$ write $\be_S := \sum_{i\in S} \be_i$, where $\be_0, \ldots, \be_n$ is the standard basis of $\ZZ^E$.  Let $N$ be the lattice $N = \ZZ^E/\ZZ\mathbf \be_E$, and write $u_S$ for the image of $\be_S$ in $N$.  The dual lattice of $N$ is $N^\vee = (\ZZ\be_E)^\perp = \{(y_0, \ldots, y_n) \in \ZZ^E \mid \sum_{i=0}^n y_i = 0\}$.  

\medskip
The \textbf{braid fan} (of dimension $n$), denoted $\Sigma_{A_n}$, is the outer normal fan of the standard \textbf{permutohedron} (of dimension $n$), which is the polytope
\[
\Pi_n := \operatorname{Conv}(w(0,1,\ldots, n) \in \RR^E \mid \textnormal{all permutations $w$ of $E$}).
\]
Concretely, the braid fan $\Sigma_{A_n}$ is a complete fan in $N_\RR$ whose cones are $\operatorname{Cone}(u_{S_1}, \ldots, u_{S_k}) \subset N_\RR$, one for each chain of nonempty proper subsets $\emptyset \subsetneq S_1 \subsetneq \cdots \subsetneq S_k \subsetneq E$.  In particular, the primitive rays of $\Sigma_{A_n}$ are $\{u_S \mid \emptyset \subsetneq S \subsetneq E\}$.
This fan is also known as the Coxeter complex of the type $A$ root system, hence the notation $\Sigma_{A_n}$.  The associated toric variety of $\Sigma_{A_n}$, denoted $X_{A_n}$, is often called the \textbf{permutohedral variety} (of dimension $n$).

\medskip
We assume familiarity with the basics of matroids, and refer to \cite{Wel76, Oxl11} as general references.  We fix the following notation for matroids:  We write $U_{r,E}$ for the uniform matroid of rank $r$ on $E$, and we set a matroid $M$ to have
\begin{itemize}
        \item ground set $E = \{0,1, \ldots, n\}$,
        \item $\mathcal B(M)$ the set of bases of $M$, 
        \item $\rk_M$ the rank function of $M$, or simply $\rk$ when the matroid in question is clear,
        \item $\mathscr L_M$ the lattice of flats of $M$, which we also use to denote the set of flats,
        \item $\mathfrak A(M)$ the set of atoms of $\mathscr L_M$, which are flats of rank 1,
        \item $\mathcal S(M)$ the set of spanning sets of $M$, which are subsets of $E$ that contain a basis of $M$,
	\item $\operatorname{cl}_M(S)$ the closure of a subset $S\subseteq E$, which is the smallest flat of $M$ containing $S$, and
        \item $Q(M)$ the base polytope of $M$, which is the polytope $\operatorname{Conv}(\be_B \mid B\in \mathcal B(M)) \subset \RR^E$.
\end{itemize}

\medskip
Matroids define Minkowski weights on $\Sigma_{A_n}$ in the following way.

\begin{prop}\label{prop:bergman}
Let $M$ be a loopless matroid $M$ of rank $r = d+1$.
\begin{enumerate}
\item \cite[Theorem 4.4.5]{MS15} A function $\Delta_M: \Sigma_{A_n}(d) \to \RR$ defined by
\[
\Delta_M(\operatorname{Cone}(u_{S_1}, \ldots, u_{S_d})) = \begin{cases}
1 & \textnormal{if $S_1, \ldots,  S_d$ are flats of $M$}\\
0 & \textnormal{otherwise}
\end{cases}
\]
for each chain of nonempty proper subsets $\emptyset \subsetneq S_1 \subsetneq \cdots \subsetneq S_k \subsetneq E$ is a $d$-dimensional Minkowski weight on $\Sigma_{A_n}$.
\item \cite[Proposition 5.2]{AHK18} Let $\Sigma_M$ be the smooth fan structure on the support $|\Delta_M|$ inherited from $\Sigma_{A_n}$.  That is, $\Sigma_M$ is a subfan of $\Sigma_{A_n}$ whose cones are $\operatorname{Cone}(u_{F_1}, \ldots, u_{F_k}) \subset N_\RR$, one for each chain of nonempty proper flats $\emptyset \subsetneq F_1\subsetneq \cdots\subsetneq F_k\subsetneq E$ of $M$.  Then the Bergman class $\Delta_M$ is the unique $d$-dimensional Minkowski weight on $\Sigma_M$  (up to scaling).
\end{enumerate}
\end{prop}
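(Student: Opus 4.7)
For part (1), I would verify the balancing condition directly at each codimension-one cone $\tau = \operatorname{Cone}(u_{S_1}, \ldots, u_{S_{d-1}})$ of $\Sigma_{A_n}$, corresponding to a chain $\emptyset \subsetneq S_1 \subsetneq \cdots \subsetneq S_{d-1} \subsetneq E$. If some $S_j$ fails to be a flat of $M$, then every refinement of this chain contains a non-flat, $\Delta_M$ vanishes on every top cone containing $\tau$, and balancing is automatic. Otherwise, $\Delta_M$ is supported only on refinements extending to a maximal flag of flats $\emptyset = S_0 \subsetneq S_1 \subsetneq \cdots \subsetneq S_d = E$ with all rank jumps of size one; the $d$ jumps of the current chain sum to $r = d+1$, so exactly one jump (say between $S_{i-1}$ and $S_i$) has size two and all others have size one. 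Contributing refinements correspond to flats $F$ with $S_{i-1} \subsetneq F \subsetneq S_i$ and $\rk F = \rk S_{i-1}+1$, and these biject via $F = S_{i-1} \cup A_j$ with the atoms $A_1, \ldots, A_k$ of the loopless rank-two minor $M|_{S_i}/S_{i-1}$. Since these atoms partition $S_i \setminus S_{i-1}$,
\[
\sum_{j=1}^k u_{S_{i-1} \cup A_j} = \sum_{j=1}^k u_{A_j} + k\, u_{S_{i-1}} = u_{S_i} + (k-1)\, u_{S_{i-1}} \in \operatorname{span}_\RR(\tau),
\]
which is exactly the balancing condition.

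For part (2), I would induct on the rank $r = d+1$ of $M$, with the trivial base case $r = 1$ giving $\Sigma_M = \{0\}$ and $\operatorname{MW}_0(\Sigma_M) \cong \RR$. For the inductive step, let $w \in \operatorname{MW}_d(\Sigma_M)$. For each atom $F \in \mathfrak A(M)$, the cones of $\Sigma_M$ containing $u_F$ correspond to chains of flats starting at $F$, and passing to the quotient by $\RR u_F$ identifies the star of $u_F$ in $\Sigma_M$ with the Bergman fan $\Sigma_{M/F}$ of the loopless rank-$d$ matroid $M/F$. The restriction of $w$ to this star is a top-dimensional Minkowski weight, which by the inductive hypothesis equals $c_F \cdot \Delta_{M/F}$ for some $c_F \in \RR$; concretely, $w(F, F_2, \ldots, F_d) = c_F$ on every maximal chain of flats of $M$ starting at $F$.

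It remains to show $c_F$ is independent of $F$. Fix a rank-two flat $G$ of $M$, and when $d \geq 2$ complete it to a codimension-one cone $\tau = (G, F_3, \ldots, F_d) \in \Sigma_M(d-1)$ with $\rk F_j = j$ and $G \subsetneq F_3$ (when $d=1$, take $\tau$ to be the origin). Since the rank-two gap of $\tau$ sits between $\emptyset$ and $G$, the top-dimensional refinements correspond to atoms $F \subsetneq G$ of $M|_G$, and balancing yields $\sum_{F \in \mathfrak A(M|_G)} c_F\, u_F \in \operatorname{span}_\RR(u_G, u_{F_3}, \ldots, u_{F_d})$. The atoms of $M|_G$ partition $G$, so the corresponding $u_F$'s are linearly independent in $N_\RR$ and satisfy $\sum_F u_F = u_G$; modulo the above span their only relation is $\sum_F u_F \equiv 0$, forcing $c_F$ to be constant on the atoms of $M|_G$. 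Since $M$ is loopless, any two atoms $F, F'$ lie in the common rank-two flat $\operatorname{cl}_M(F \cup F')$, so all $c_F$ coincide and $w = c \cdot \Delta_M$. The main technical step I anticipate is the linear-independence check modulo the span, which I expect to follow cleanly from the disjointness of atoms inside $G$ combined with the simpliciality of the cone $(G, F_3, \ldots, F_d)$.
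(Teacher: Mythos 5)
The paper does not actually prove this proposition; both parts are cited to \cite[Theorem 4.4.5]{MS15} and \cite[Proposition 5.2]{AHK18}, so there is no in-paper proof to compare against. Your argument is correct and is essentially the standard proof of these facts, so there is nothing to flag as a deviation.

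A few remarks for completeness. In part (1), the key reduction is that when all $S_j$ in the codimension-one chain $\emptyset = S_0 \subsetneq S_1 \subsetneq \cdots \subsetneq S_{d-1} \subsetneq S_d = E$ are flats, exactly one rank gap has size $2$, and you balance at that gap using the fact that the atoms of the loopless rank-two minor $M|_{S_i}/S_{i-1}$ partition $S_i \setminus S_{i-1}$; this is exactly right, and the computation $\sum_j u_{S_{i-1}\cup A_j} = u_{S_i} + (k-1)u_{S_{i-1}}$ lands in $\operatorname{span}_\RR(\tau)$ even in the boundary cases $i=1$ (where $u_{S_0}=0$) and $i=d$ (where $u_{S_d}=u_E=0$). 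In part (2), the induction via the star at an atom $F$, using $\operatorname{star}(\rho_F,\Sigma_M)\simeq \Sigma_{M/F}$ (the $\Sigma_{M|F}$ factor being trivial for an atom), is the same mechanism the paper later records in \Cref{lem:bergmanprod}. The one step you leave as an anticipated check --- that modulo $\operatorname{span}(u_G,u_{F_3},\ldots,u_{F_d})$ the only relation among the $u_F$ for atoms $F\subsetneq G$ is $\sum_F u_F\equiv 0$ --- does go through cleanly for the reason you expect: lifting to $\RR^E$, a combination $\sum_F a_F \be_F$ is supported inside $G$, while a vector in $\operatorname{span}(\be_G,\be_{F_3},\ldots,\be_{F_d},\be_E)$ supported inside $G$ must have vanishing coefficients on $\be_{F_3},\ldots,\be_{F_d},\be_E$ because the restrictions $\be_{F_3\setminus G},\ldots,\be_{F_d\setminus G},\be_{E\setminus G}$ to $E\setminus G$ form a strictly nested (hence linearly independent) family; thus the intersection is $\RR\,\be_G=\RR\sum_F \be_F$, forcing the $a_F$ to be constant. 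Finally, the connectivity step (any two atoms lie in a common rank-two flat) is valid since $M$ is loopless and distinct atoms $F\neq F'$ have $\rk_M(F\cup F')=2$.
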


\begin{defn}
With notations as in \Cref{prop:bergman} above,  we call the Minkowski weight $\Delta_M$ the \textbf{Bergman class} of $M$, and we call the fan $\Sigma_M$ the \textbf{Bergman fan}\footnote{Some define the Bergman fan of $M$ as a coarser smooth fan structure on the support $|\Sigma_M|$.  A smooth fan structure on $|\Sigma_M|$ that coarsens $\Sigma_M$ corresponds to a choice of a building set on the lattice of flats $\mathscr L_M$ \cite{FS05, AK06}.  Here we will always take the smooth fan structure for $\Sigma_M$ as a subfan of $\Sigma_{A_n}$.} of $M$.
\end{defn}

We will need the following description of supports of Bergman classes in \Cref{section:nefpresentation}.

\begin{lem}\cite[Corollary 4.2.11]{MS15} \label{lem:looplessfaces}
Let $M$ be a loopless matroid, and $Q(M)$ its base polytope.  The support $|\Delta_M|$ of its Bergman class is equal to the union of cones $\sigma$ in the outer normal fan of $Q(M)$ satisfying the following condition: The corresponding face $Q(M)(\sigma)$ of $\sigma$ is a base polytope of a loopless matroid.
\end{lem}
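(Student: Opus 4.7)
The plan is to establish the equivalence
\[
u \in |\Delta_M| \iff Q(M)(u) \text{ is the base polytope of a loopless matroid}
\]
at every $u \in N_\RR$; the lemma statement then follows by unwinding what it means for a point to lie in a cone of the outer normal fan of $Q(M)$.  Given $u \in N_\RR$, let $c_0 > c_1 > \cdots > c_k$ be its distinct values, let $T_j := \{i \in E : u_i = c_j\}$ be the level sets, and set $S_j := T_0 \cup \cdots \cup T_{j-1}$, producing a flag $\emptyset = S_0 \subsetneq S_1 \subsetneq \cdots \subsetneq S_k \subsetneq S_{k+1} = E$.  A standard greedy identification of optimal bases yields
\[
Q(M)(u) = Q(M_u), \qquad M_u := \bigoplus_{j=0}^{k} M|_{S_{j+1}}/S_j.
\]
Moreover, the telescoping identity $u = \sum_{j=1}^{k}(c_{j-1} - c_j)\, u_{S_j}$ (in $N_\RR$, after subtracting $c_k \be_E$) has strictly positive coefficients, so $u \in |\Delta_M|$ is equivalent to $S_1, \ldots, S_k$ all being flats of $M$.

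Next I would characterize when $M_u$ is loopless.  As a direct sum, $M_u$ is loopless iff each summand is.  The $j=0$ summand $M|_{S_1}$ is automatically loopless since $M$ is.  For $j \geq 1$, an element $e \in S_{j+1}\setminus S_j$ is a loop of $M|_{S_{j+1}}/S_j$ precisely when $e \in \operatorname{cl}_{M|_{S_{j+1}}}(S_j) = \operatorname{cl}_M(S_j) \cap S_{j+1}$, so
\[
M_u \text{ is loopless} \iff \operatorname{cl}_M(S_j) \cap S_{j+1} \subseteq S_j \text{ for every } 1 \leq j \leq k.
\]
If every $S_j$ is a flat then $\operatorname{cl}_M(S_j) = S_j$ and the condition is immediate, giving the easy direction of the pointwise equivalence.

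The substantive step, which I expect to be the main obstacle, is the converse: the displayed condition in fact forces $\operatorname{cl}_M(S_j) = S_j$ for each $j$, i.e., each $S_j$ is a flat.  I would argue by tracking a putative violating element through the chain.  Suppose $e \in \operatorname{cl}_M(S_j) \setminus S_j$; since $e$ belongs to a unique level set $T_{j'}$, necessarily with $j' \geq j$, we have $e \in S_{j'+1}\setminus S_{j'}$.  Monotonicity $\operatorname{cl}_M(S_j) \subseteq \operatorname{cl}_M(S_{j'})$ places $e$ in $\operatorname{cl}_M(S_{j'}) \cap S_{j'+1}$, and the looplessness condition at index $j'$ then forces $e \in S_{j'}$, contradicting $e \in S_{j'+1}\setminus S_{j'}$.

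Finally, to convert the pointwise equivalence into the claim about cones, each $u$ lies in the relative interior of a unique cone $\tau$ of the outer normal fan of $Q(M)$, and $Q(M)(\tau) = Q(M)(u)$.  Thus if $u \in |\Delta_M|$, the cone $\tau$ itself witnesses membership in the right-hand union.  Conversely, if $u \in \sigma$ with $Q(M)(\sigma)$ loopless, the face relation $\tau \preceq \sigma$ gives $Q(M)(\tau) \supseteq Q(M)(\sigma)$, exhibiting the bases of $Q(M)(\sigma)$'s matroid as a subset of those of $Q(M)(\tau)$'s matroid, so any loop of the latter would be a loop of the former; hence $Q(M)(\tau) = Q(M)(u)$ is loopless, and $u \in |\Delta_M|$ by the pointwise equivalence.
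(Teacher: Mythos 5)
Since the paper cites this result from \cite[Corollary 4.2.11]{MS15} and gives no proof of its own, there is no internal argument to compare against; I will assess your proof on its own terms.

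Your proof is correct.  The key technical facts you invoke are all standard and used appropriately: (a) the greedy description of the face $Q(M)(u)$ of a matroid polytope as $Q\bigl(\bigoplus_j M|_{S_{j+1}}/S_j\bigr)$, where the $S_j$ are upper level sets of $u$; (b) the telescoping identity expressing $u$ (modulo $\be_E$) as a strictly positive combination of $u_{S_1},\ldots,u_{S_k}$, which identifies $\operatorname{Cone}(u_{S_1},\ldots,u_{S_k})$ as the smallest cone of $\Sigma_{A_n}$ containing $u$, so that $u\in|\Delta_M|$ iff every $S_j$ is a flat of $M$ (using that any chain of flats extends to a maximal chain); and (c) the identity $\operatorname{cl}_{M|_B}(A)=\operatorname{cl}_M(A)\cap B$ to detect loops in the summands.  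The one genuinely nontrivial step---that the looplessness conditions $\operatorname{cl}_M(S_j)\cap S_{j+1}\subseteq S_j$ for all $j$ already force each $S_j$ to be closed---is handled correctly by locating a would-be violating element $e\in\operatorname{cl}_M(S_j)\setminus S_j$ in its level set $T_{j'}$ with $j'\geq j\geq 1$ and deriving a contradiction from the condition at index $j'$; the bounds $1\le j'\le k$ needed to apply the hypothesis hold.  Finally, the translation from the pointwise equivalence to the stated union-of-cones formulation is correct, including the observation that if $\tau\preceq\sigma$ then $\mathcal B\bigl(Q(M)(\sigma)\bigr)\subseteq\mathcal B\bigl(Q(M)(\tau)\bigr)$, so looplessness of the face matroid is inherited upward along the face poset of the normal fan.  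This is essentially the same decomposition-of-faces argument found in \cite{MS15}, packaged as a pointwise statement; it is complete and sound.
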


 \medskip
The Chow ring of a matroid is defined as the Chow cohomology ring of its Bergman fan.  Explicitly, we have the following.

\begin{defn}
The \textbf{Chow ring} of a loopless matroid $M$ is the graded ring
$$A^\bullet(M):= A^\bullet(\Sigma_M) = \frac{\RR[x_F : F\in \mathscr L_M\setminus\{\emptyset, E\}]}{\langle x_Fx_{F'} \ | \ F,F' \textnormal{ incomparable}\rangle + \langle \sum_{F\supseteq a} x_F - \sum_{G\supseteq b}x_G \ | \ a,b\in \mathfrak A(M) \rangle}.$$
We call linear combinations of the variables $x_F$ \textbf{divisors} on $M$, and the elements of $A^1(M)$ \textbf{divisor classes} on $M$.  The divisor class of $\sum_{F\supseteq a} x_F$ for any atom $a\in \mathfrak A(M)$ is called the \textbf{hyperplane class} of $M$, and is denoted $\alpha(M)$ or just $\alpha$ if the matroid in question is clear.
\end{defn}

\begin{rem}
The ring $A^\bullet(M)$ was first studied in \cite{FY04} under a slightly different presentation, which for clarity is denoted $A_{FY}^\bullet(M)$ and is given as
$$A_{FY}^\bullet(M) := \frac{ \RR[z_F : F\in {\mathscr L_M}\setminus\{\emptyset\}] }{ \langle z_Fz_{F'} \ | \ F,F' \textnormal{ incomparable}\rangle + \langle \sum_{F\supseteq a}z_F | \ a\in \mathfrak A(M) \rangle }.$$
That is, we have $x_F = z_F$ for every nonempty proper flat $F\in \mathscr L_M$, and $z_E = -\alpha$.
As both presentataions $A^\bullet(M)$ and $A^\bullet_{FY}(M)$ are relevant for us, we will use the variable names $x,z$ in a consistent manner;  for example, in the summation $\sum_{F\supseteq F'} x_F$ it is implied that $F\subsetneq E$, whereas $\sum_{F\supseteq F'} z_F$ includes $z_E$ as a summand.
\end{rem}

Since $\operatorname{MW}(\Sigma_M) \simeq \RR$ by \Cref{prop:bergman}.(2), with the Bergman class as the fundamental class, the Chow ring of a matroid $M$ has the \textbf{degree map} $\int_M: A^d(\Sigma_M) \to \RR$, defined by the cap product $\xi \mapsto \xi\cap \Delta_M$.   Explicitly, it is determined by
$$\int_M x_{F_1}x_{F_2}\cdots x_{F_d} = 1 \textnormal{ for every maximal chain } F_1\subsetneq \cdots \subsetneq F_d \textnormal{ in }  {\mathscr L_M}\setminus\{\emptyset,E\}.$$

\medskip
Note that the braid fan $\Sigma_{A_n}$ is the Bergman fan of the Boolean matroid $U_{|E|,E}$, and its fundamental class $\Delta_{\Sigma_{A_n}}$ is the Bergman class of $U_{|E|,E}$.  We will thus always identify $A^\bullet(\Sigma_{A_n}) = A^\bullet(U_{|E|,E})$.

\medskip
We end this subsection with a discussion of nef and ample divisors on the braid fan, and the resulting analogous notions for Bergman fans.
The following characterization of nef divisors on $\Sigma_{A_n}$, which is a specialization of Theorem \ref{thm:nefdef}, was recognized in various works \cite{Edm70, Mur03, Pos09, AA17}; for a modern treatment and generalization to arbitrary Coxeter root systems we point to \cite{ACEP20}.

\begin{prop}\label{prop:genperm}
The following are equivalent for a divisor $D = \sum_{ \emptyset \subsetneq S \subsetneq E} c_S x_S \in A^1(\Sigma_{A_n})$.
\begin{enumerate}
\item $D$ is a nef divisor on $\Sigma_{A_n}$,
\item the function $c_{(\cdot)} : 2^{E} \to \RR$ satisfies the \textbf{submodular property}
$$c_A + c_B \geq  c_{A\cup B} + c_{A\cap B} \quad \textnormal{for every } A,B\subseteq E \textnormal{ where $c_\emptyset = c_{E} = 0$},$$
\item the normal fan of the polytope $P_D = \{m \in N^\vee_\RR \ | \ m( u_S) \leq c_S \ \forall \emptyset \subsetneq S \subsetneq E\}$ coarsens $\Sigma_{A_n}$,
\item every edge of $P_D$ is parallel to $\be_i - \be_j$ for some $i\neq j \in E$.
\end{enumerate}
\end{prop}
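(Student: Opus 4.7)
The plan is to establish the four equivalences as a cycle, treating the geometric conditions (1), (3) and the combinatorial conditions (2), (4) separately. The equivalence (1) $\Leftrightarrow$ (3) is immediate by specializing \Cref{thm:nefdef} to $\Sigma = \Sigma_{A_n}$, so the real work is to bridge these geometric statements to the combinatorial ones.

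For (1) $\Leftrightarrow$ (2), the piecewise-linear function $\varphi_D$ is convex if and only if it is locally convex across every wall (codimension-one cone) of $\Sigma_{A_n}$. A wall corresponds to a chain $\emptyset = S_0 \subsetneq S_1 \subsetneq \cdots \subsetneq S_{n-1} \subsetneq S_n = E$ with a unique doubleton gap $S_{i+1} \setminus S_{i-1} = \{a,b\}$, and the two maximal cones meeting at this wall extend the chain by inserting either $A := S_{i-1} \cup \{a\}$ or $B := S_{i-1} \cup \{b\}$. The key identity $u_A + u_B = u_{A \cap B} + u_{A \cup B}$ in $N$ (both sides equal $u_{S_{i-1}} + u_{S_{i+1}}$), combined with linearity of $\varphi_D$ on the cone spanned by $u_{S_{i-1}}, u_{S_{i+1}}$, turns the wall-crossing inequality into precisely $c_A + c_B \geq c_{A \cap B} + c_{A \cup B}$. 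So nefness is equivalent to submodularity restricted to pairs $A, B$ that differ by swapping a single element outside a fixed common part, and a standard induction on $|A \triangle B|$ extends this to the full submodular inequality for all $A, B \subseteq E$.

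For (3) $\Leftrightarrow$ (4), I use the polytope duality between edges of $P_D$ and codimension-one cones of its normal fan $\Sigma_{P_D}$: an edge is parallel to $v \in N_\RR^\vee$ iff the corresponding cone lies in $v^\perp$. Since each codimension-one cone of $\Sigma_{A_n}$ is spanned by a chain with doubleton gap $\{i, j\}$ and lies in $(\be_i - \be_j)^\perp$, the direction (3) $\Rightarrow$ (4) is immediate. For the converse, rather than arguing directly on normal fans, I route through the other equivalences: condition (4), together with the recovery formula $c_S = \max\{m(u_S) \mid m \in P_D\}$ from \Cref{thm:nefdef}, forces the support function $c_{(\cdot)}$ to be submodular via a vertex-exchange argument (if $\be_p - \be_q$ connects two vertices maximizing $u_A$ and $u_B$ respectively, one can move to the intersection and union), and then (2) $\Leftrightarrow$ (1) $\Leftrightarrow$ (3) closes the loop.

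The delicate step I expect to be the main obstacle is the converse (4) $\Rightarrow$ (2): the edge-direction condition on $P_D$ does not visibly constrain the coefficient function directly, and this is essentially the classical characterization of generalized permutohedra as polymatroid base polytopes due to Edmonds. The other implications should be essentially mechanical translations once the correspondence between walls of $\Sigma_{A_n}$, doubleton gaps in chains, and the lattice identity $u_A + u_B = u_{A \cap B} + u_{A \cup B}$ is set up.
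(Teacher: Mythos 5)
The paper does not actually prove Proposition \ref{prop:genperm}: it is presented as a standard fact, ``a specialization of Theorem \ref{thm:nefdef},'' with the citations \cite{Edm70, Mur03, Pos09, AA17, ACEP20} doing the work. So there is no internal proof to compare against --- your proposal supplies an argument the paper delegates to the literature.

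Your reconstruction is essentially the standard one and is correct in outline. The reduction of $(1)\Leftrightarrow(2)$ to wall-crossing convexity plus the identity $u_A + u_B = u_{A\cap B} + u_{A\cup B}$ is exactly right, and the extension from local (single-element-swap) submodularity to full submodularity is a routine induction. The equivalence $(1)\Leftrightarrow(3)$ is Theorem \ref{thm:nefdef}, and $(3)\Rightarrow(4)$ is the duality between edges and codimension-one normal cones, as you say.

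The one place where you should slow down is the step you yourself flagged as delicate, namely recovering $(2)$ from $(4)$. You invoke the ``recovery formula'' $c_S = \max\{m(u_S) \mid m \in P_D\}$, but Theorem \ref{thm:nefdef} gives that formula only after one knows $D$ is nef --- which is what you are trying to prove. If the inequalities $m(u_S) \leq c_S$ defining $P_D$ are permitted to be slack (non-supporting), then the values $c_S$ are genuinely \emph{not} determined by the polytope $P_D$, and $(4)$ alone does not control them: for instance, inflating a few of the $c_S$ to large redundant values leaves $P_D$ and hence its edges unchanged, while destroying submodularity. This tightness issue is invisible in the way the proposition is phrased, but it is exactly where a careless chase through the cycle $(4)\Rightarrow(2)\Rightarrow(1)\Rightarrow(3)$ would break. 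The safe fix is to argue $(4)\Rightarrow(3)$ directly (a polytope all of whose edges lie in root directions has every wall of its normal fan contained in a reflection hyperplane, hence its maximal normal cones are unions of Weyl chambers), and to treat $(2)$ as characterizing nef divisors $D$ with their \emph{tight} coefficient data, which is what the references \cite{Pos09, ACEP20} do. With that interpretive caveat made explicit, your proposal is a faithful proof of the intended statement.
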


\begin{rem}\label{rem:genpermshift}
Often the polytope $P_D$ is constructed in an affine translate of $N^\vee_\RR$ in $\RR^E$, for which the presentation $A_{FY}^\bullet(\Sigma_{A_n})$ is useful.  Given a submodular function $c_{(\cdot)}: 2^{E} \to \ZZ$ with $c_{\emptyset} = 0$ but $c_{E}$ possibly nonzero, the \textbf{generalized permutohedron} associated to $c_{(\cdot)}$ is the polytope
$$P(c) := \{ y \in (\RR^{E})^\vee \ | \  y( \be_E) = c_{E} \textnormal{ and } y( \be_S) \leq c_{S} \ \forall \emptyset \subsetneq S \subsetneq [n]\}.$$
This polytope lives in the translate of $N_\RR^\vee$ where the points have coordinate sum $c_E$.  One translates $P(c)$ to $N^\vee_\RR$ as follows.  Fix an element $i\in E$.  We have
$$P(c) - c_{E}\be_i = \{m \in N_\RR^\vee \ | \  m( u_S) \leq c_{S} - c_{E}\alpha^{(i)}_S \ \forall S\} \subset N_\RR^\vee$$
where $\alpha^{(i)}_S = 1$ if $i\in S$ and 0 otherwise.  Since the divisor class of $\sum_{\emptyset \subsetneq S \subsetneq E}\alpha_S^{(i)}x_S$ is the hyperplane class $\alpha$ in $A^1(U_{|E|,E}) = A^1(\Sigma_{A_n})$, the nef divisor class that the polytope $P(c)$ corresponds to is
$$- c_{E}\alpha + \sum_{\emptyset \subsetneq S \subsetneq E} c_{S} x_S = \sum_{\emptyset\subsetneq S \subseteq E} c_{S} z_S.$$
\end{rem}

\medskip
The notion of nef and ample divisors on a matroid is inherited from the braid fan.  First, note that for a loopless matroid $M$, the inclusion of fans $\iota_M: \Sigma_M \hookrightarrow \Sigma_{A_n}$ induces the pullback map
$$\iota_M^*: A^\bullet(\Sigma_{A_n}) \to A^\bullet(M), \textnormal{ defined by } x_S \mapsto \begin{cases}
x_S & \textnormal{if $S\subsetneq E$ is a flat of $M$}\\
0 & \textnormal{otherwise.}
\end{cases}$$
When we wish to clarify whether a variable $x_S$ is an element of $A^\bullet(M)$ or $A^\bullet(\Sigma_{A_n})$, we write
$$x_S(M) := \iota_M^*x_S,$$
in which case $x_S$ is considered as an element of $A^\bullet(\Sigma_{A_n})$ and $x_S(M)$ of $A^\bullet(M)$.

\medskip 
The pullback map motivates the following notions regarding divisors on $M$.  We say that a divisor (class) on $M$ is \textbf{combinatorially nef} if it is a pullback of a nef divisor (class) on $\Sigma_{A_n}$.  A \textbf{combinatorially ample} divisor (class) is defined similarly.  Explicitly, a divisor $D = \sum_{F\in \mathscr L_M\setminus\{\emptyset, E\}} c_F \cdot x_F(M)$ is combinatorially nef if there exists a function $a_{(\cdot)} : 2^E \to \ZZ$ such that $a_\emptyset = a_E = 0$, $a_F = c_F$ for all flats $F\in \mathscr L_M$, and 
\[
a_A + a_B \geq  a_{A\cup B} + a_{A\cap B} \quad\textnormal{for every } A,B\subseteq E.
\]
Say $D$ is \textbf{ample} if the inequality is strict whenever $A$ and $B$ are incomparable.

Combinatorially nef (resp.\ ample) divisor classes on $M$ are closed under nonnegative linear combinations, since nef (resp.\ ample) divisor classes on complete fans are in general.  We thus let $\overline{\mathscr K}_M$ (resp.\ $\mathscr K_M$) be the cone in $A^1(M)$ of combinatorially nef (resp.\ ample) divisor classes on $M$, called the \textbf{combinatorially nef} (resp.\ \textbf{ample}) \textbf{cone} of $M$.

\begin{rem}\label{rem:combnefeff}
It follows from Remark \ref{rem:nefeff} that a combinatorially nef divisor class $[D]\in A^1(M)$ is effective; that is, it can be written as $D= \sum_F c_Fx_F$ where $c_F \geq 0$ for all $F\in \mathscr L_M\setminus\{\emptyset,E\}$.
\end{rem}

\subsection{The geometry of matroids via wonderful compactifications} \label{subsection:wondcpt}

We provide the underlying algebraic geometry of the combinatorial constructions in the previous two subsections \S\ref{subsection:ChowMW} and \S\ref{subsection:chowring}.

\medskip
Let $M$ be a loopless matroid on $E = \{0,1,\ldots, n\}$ of rank $r=d+1$ realizable over a field $\mathbbm k$, which we may assume to be algebraically closed.  A \textbf{realization} $\mathscr R(M)$ of $M$ consists of any of the following equivalent pieces of data:
\begin{itemize}
\item a list of vectors $E = \{v_0, \ldots, v_n\}$ spanning a $\mathbbm k$-vector space $V\simeq \mathbbm k^r$, or
\item a surjection $\mathbbm k^{n+1} \twoheadrightarrow V$ where $\be_i \mapsto v_i$, or
\item an injection $\PP V^* \hookrightarrow \PP_{\mathbbm k}^n$, dualizing the surjection $\mathbbm k^{n+1} \twoheadrightarrow V$.
\end{itemize}

For a realization $\mathscr R(M)$ of $M$ with $\PP V^* \hookrightarrow \PP^n$, the coordinate hyperplanes of $\PP^{n}$ intersect with $\PP V^*$ to give the associated \textbf{hyperplane arrangement} $\mathcal A_{\mathscr R(M)}$ on $\PP V^*$, which is encoded by the flats of $M$ in the following way.  For each nonempty flat $F$ of $M$, let $L_F$ be a linear subspace of $V^*$ defined by
$$L_F := \{f\in V^* \ | \ f(v_i) = 0 \ \forall v_i\in F\},$$
and let $\PP L_F$ be the linear subvariety of $\PP V^*$.  The hyperplanes of $\mathcal A_{\mathscr R(M)}$ are $\{ \PP L_a \}_{a\in \mathfrak A(M)}$ corresponding to the atoms, and more generally, a flat $F$ of rank $c$ corresponds to the $c$-codimensional linear subvariety $\PP L_F$.

\medskip
We denote by $\mathring Y_{\mathscr R(M)}$ the hyperplane arrangement complement $\PP V^* \setminus \bigcup \mathcal A_{\mathscr R(M)}$.  It is a linear subvariety of an algebraic torus in the following way: The algebraic torus $T_N = (\mathbbm k^*)^{n+1}/\mathbbm k^*$ of the lattice $N = \ZZ^{n+1}/\ZZ(1,1,\ldots, 1)$ is the complement of the union of coordinate hyperplanes in $\PP^n$, and hence $\mathring Y_{\mathscr R(M)}$ is the intersection of $\PP V^*$ with $T_N$.  The linear subvariety $\mathring Y_{\mathscr R(M)} \subset T_N$ is related to the Bergman class of $M$ by tropicalization in the following way (see \cite[\S3]{MS15} for tropicalizations of subvarieties of an algebraic torus).

\begin{thm}
Let $\mathscr R(M)$ be a realization of a loopless matroid $M$, and let $\mathring Y_{\mathscr R(M)} \subset T_N$ be the associated hyperplane arrangement complement.   Recall that the permutohedral variety $X_{A_n}$ is a toric variety with the torus $T_N$.
\begin{enumerate}
\item \cite[Theorem 4.1.11]{MS15}
The support of the tropicalization of $\mathring Y_{\mathscr R(M)}$, denoted $\trop(\mathring Y_{\mathscr R(M)})$ equals the support of the Bergman fan $\Delta_M$.
\item \cite[Proposition 6.4.17 \& Theorem 6.7.7]{MS15} It follows that the closure $Y_{\mathscr R(M)}$ of $\mathring Y_{\mathscr R(M)}$ in the permutohedral variety $X_{A_n}$ satisfies
\[
\delta_{\Sigma_{A_n}}([Y_{\mathscr R(M)}]) = \Delta_M,
\]
where $\delta_{\Sigma_{A_n}}$ is the isomorphism $A^\bullet(X_{A_n}) \simeq \operatorname{MW}_{n-\bullet}(\Sigma_{A_n})$ in \Cref{thm:FS}.
\end{enumerate}
\end{thm}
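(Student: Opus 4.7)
The plan is to establish both statements by translating the algebraic-geometric data into tropical combinatorics, following the template of Maclagan--Sturmfels; since both parts are cited from \cite{MS15}, I will sketch the key ideas rather than reprove them in full.

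For (1), I would first make $\mathring Y_{\mathscr R(M)}$ explicit as a very affine variety. The surjection $\mathbbm k^{n+1} \twoheadrightarrow V$ has kernel spanned by vectors $\sum_{i\in C} \lambda_i^C \be_i$ indexed by the circuits $C$ of $M$, with $\lambda_i^C\neq 0$ for $i\in C$. The ideal $I_M\subset \mathbbm k[x_0^{\pm},\ldots,x_n^{\pm}]$ of $\mathring Y_{\mathscr R(M)}$ in $T_N$ is generated by the linear forms $\sum_{i\in C}\lambda_i^C x_i$. By the Fundamental Theorem of Tropical Geometry, $\trop(\mathring Y_{\mathscr R(M)})$ is the set of $w\in N_\RR$ such that $\operatorname{in}_w(I_M)$ contains no monomial. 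For a linear ideal, $\operatorname{in}_w(I_M)$ is again generated by the initial forms of the circuit generators, and the no-monomial condition becomes exactly the statement that for every circuit $C$ of $M$ the minimum of $w_i$ for $i\in C$ is attained at least twice. This combinatorial condition cuts out precisely $|\Sigma_M|$, by the standard description of the Bergman fan support via valuated circuits.

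For (2), I would use the intersection-theoretic interpretation of the isomorphism $\delta_{\Sigma_{A_n}}: A^\bullet(X_{A_n})\overset{\sim}\to \operatorname{MW}_{n-\bullet}(\Sigma_{A_n})$ of \Cref{thm:FS}: for a subvariety $Y\subset X_{A_n}$ of pure dimension $k$, the value of $\delta_{\Sigma_{A_n}}([Y])$ on a cone $\sigma\in \Sigma_{A_n}(n-k)$ equals the intersection number $[Y]\cdot [V(\sigma)]$, where $V(\sigma)$ is the closed torus orbit. The tropical projection/intersection formula (\cite[Ch.~6]{MS15}) identifies these intersection numbers with the tropical multiplicities of $\trop(Y)$ on $\sigma$, provided the intersection is proper---which holds here because $Y_{\mathscr R(M)}$ is the closure of a subvariety of the dense torus $T_N$ in a complete toric variety whose fan refines $\trop(\mathring Y_{\mathscr R(M)})$ by part (1). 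Combined with part (1), the support of the resulting Minkowski weight is $|\Sigma_M|$, matching $\Delta_M$.

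The main obstacle is verifying that the tropical multiplicities attached to $\operatorname{in}_w(I_M)$ are all equal to $1$ on top-dimensional cones of $\Sigma_M$. This is where the linearity of $I_M$ is essential: for a generic $w$ in a maximal cone of $\Sigma_M$, the initial ideal $\operatorname{in}_w(I_M)$ is again a prime linear ideal (corresponding to a matroid minor of $M$), so its zero locus in $T_N$ is irreducible and appears with multiplicity one in the tropical cycle. Having established constant multiplicity $1$, we conclude that $\delta_{\Sigma_{A_n}}([Y_{\mathscr R(M)}])$ is the unique (up to scaling) Minkowski weight supported on $|\Sigma_M|$ guaranteed by \Cref{prop:bergman}(2), and the normalization is fixed by the fact that the transverse intersection of $Y_{\mathscr R(M)}$ with a complementary-dimensional torus orbit has length $1$. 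This yields $\delta_{\Sigma_{A_n}}([Y_{\mathscr R(M)}]) = \Delta_M$.
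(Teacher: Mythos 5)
The paper does not prove this theorem; it cites both parts directly from Maclagan--Sturmfels (\cite[Theorem 4.1.11, Proposition 6.4.17, Theorem 6.7.7]{MS15}) and offers no argument of its own. Your sketch is a faithful summary of the argument given in that reference: part (1) via the Fundamental Theorem of Tropical Geometry applied to the linear circuit ideal, and part (2) via the Fulton--Sturmfels identification of Minkowski weights with Chow classes together with the multiplicity-one computation for initial degenerations of linear ideals. Since there is no paper-internal proof to diverge from, your approach and the (cited) approach coincide.

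One small point worth tightening if you were to write this up in full: in part (1) you pass from the tropical-circuit description (every circuit has its minimum attained at least twice) to ``the standard description of the Bergman fan support via valuated circuits'' without comment, but the equivalence of the circuit characterization with the flag-of-flats characterization used to define $\Sigma_M$ in \Cref{prop:bergman} is itself a nontrivial combinatorial lemma (it is essentially \cite[Theorem 4.2.6]{MS15}). Likewise, in part (2) your claim that $\operatorname{in}_w(I_M)$ for generic $w$ in a maximal cone of $\Sigma_M$ is a prime linear ideal corresponding to a matroid minor deserves a precise reference: the relevant initial matroid for a cone indexed by a flag $\emptyset\subsetneq F_1\subsetneq\cdots\subsetneq F_d\subsetneq E$ is the direct sum of the successive minors $M|_{F_1}\oplus (M|_{F_2}/F_1)\oplus\cdots\oplus (M/F_d)$, and the primality and multiplicity-one statements follow from that identification. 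Neither omission is an error, just material you are (appropriately) deferring to the cited text.
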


In other words, the Bergman class $\Delta_M$ corresponds to the Chow homology class of the closure $Y_{\mathscr R(M)}$ of $\mathring Y_{\mathscr R(M)}$ in the permutohedral variety $X_{{A_n}}$.  The variety $Y_{\mathscr R(M)}$ is called the \textbf{wonderful compactification} of the hyperplane arrangement complement $\mathring Y_{\mathscr R(M)}$.

\begin{rem}\label{rem:wndcpt}
The wonderful compactification $Y_{\mathscr R(M)}$ can be described in two equivalent ways \cite[\S3.2]{dCP95}.
\begin{enumerate}
\item The variety $Y_{\mathscr R(M)}$ is obtained as a series of blow-ups on $\PP V^*$ by the following process:  First blow-up the points $\{\PP L_F\}_{\operatorname{rk}(F) = \operatorname{rk}(E)-1}$, then blow-up the strict transforms of the lines $\{\PP L_F\}_{\operatorname{rk}(F) =\operatorname{rk}(E)-2}$, and continue until having blown-up strict transforms of $\{\PP L_F\}_{\operatorname{rk}(F) = 1}$.  We denote by $\pi_{\mathscr R(M)}: Y_{\mathscr R(M)} \to \PP V^*$ be the blow-down map.
\item For each nonempty flat $F$ of $M$, the projection away from the linear subvariety $\PP L_F \subset \PP V^*$ is a rational map $\PP V^* \dashrightarrow \PP(V^*/{L_F})$.  The variety $Y_{\mathscr R(M)}$ then is the (closure of) the graph of the rational map
$$\PP V^* \dashrightarrow \prod_{F\in \mathscr L_M\setminus \{\emptyset\}} \PP(V^*/{L_F}).$$
\end{enumerate}
\end{rem}

When $U_{n+1,n+1}$ is realized as the standard basis of $\mathbbm k^{n+1}$, the associated wonderful compactification is the toric variety $X_{{A_n}}$ of the braid fan.  It is obtained from $\PP^n$ by blowing up the coordinate points, then the (strict transforms of) coordinate lines, and so forth.  Let us write $\pi_{A_n}: X_{\Sigma_{A_n}} \to \PP^n$ for the blow-down map.  Then for a realization $\PP V^* \hookrightarrow \PP^n$ of a loopless matroid $M$, \Cref{rem:wndcpt}.(1) above expresses the wonderful compactification $Y_{\mathscr R(M)}$ as the strict transform of $\PP V^* \subset \PP^n$ under the sequence of blow-ups $\pi_{A_n}$.  In other words, we have a diagram
$$
\xymatrix
{
&{Y_{\mathscr R(M)}} \ar@{^{(}->}^{\quad \iota_M}[r] \ar[d]_{\pi_{\mathscr R(M)}} &X_{\Sigma_{A_n}} \ar[d]^{\pi_{A_n}}\\
&\PP V^* \ar@{^{(}->}[r] &\PP^n.
}
$$
The boundary of $Y_{\mathscr R(M)}\setminus \mathring Y_{\mathscr R(M)}$ consists of the exceptional divisors $\mathcal E_F$ obtained by blowing up (strict transforms of) $\PP{L_F}$.  These divisors have simple-normal-crossings \cite{dCP95}, and consequently the intersection theory of the boundary divisors of $Y_{\mathscr R(M)}$ is encoded in the matroid.  More precisely, the Chow ring $A^\bullet(Y_{\mathscr R(M)})$ of the variety $Y_{\mathscr R(M)}$ is isomorphic to the Chow cohomology ring $A^\bullet(M)$ of the Bergman fan of $M$ \cite[Corollary 2]{FY04}.

\begin{rem}\label{rem:matgeom}
We note the following geometric observations about the presentation
$$A^\bullet(Y_{\mathscr R(M)}) \simeq A^\bullet(M) = \frac{\RR[x_F : F\in \mathscr L_M\setminus\{\emptyset, E\}]}{\langle x_Fx_{F'} \ | \ F,F' \textnormal{ incomparable}\rangle + \langle \sum_{F\supseteq a} x_F - \sum_{G\supseteq b}x_G \ | \ a,b\in \mathfrak A(M) \rangle}.$$
\begin{enumerate}
\item The variables $x_F$ correspond to the exceptional divisors $\mathcal E_F$ obtained by blowing up (strict transforms of) $\PP{L_F}$. 
\item The quadric relations $x_Fx_{F'} = 0$ reflect that two exceptional divisors from blowing up two non-intersecting linear subspaces do not intersect.
\item The linear relations defining $A^\bullet(M)$ reflect that for any atom $a\in \mathfrak A(M)$, we have $-z_E = \alpha(M) = \sum_{F\supseteq a} x_F = \pi_{\mathscr R(M)}^* h$ where $h = c_1(\mathscr O_{\PP V^*}(1))$ is the hyperplane class of $\PP V^*$.
\item Under $A^\bullet(Y_{\mathscr R(M)}) \simeq A^\bullet(M)$ and $A^\bullet(X_{\Sigma_{A_n}}) \simeq A^\bullet(\Sigma_{A_n})$, the pullback map $\iota_M^*: A^\bullet(X_{\Sigma_{A_n}}) \to A^\bullet(Y_{\mathscr R(M)})$ along the closed embedding $\iota_M: Y_{\mathscr R(M)}\hookrightarrow X_{\Sigma_{A_n}}$ is the pullback map of Chow cohomology rings of $\Sigma_M$ and $\Sigma_{A_n}$ induced by the inclusion of fans $\Sigma_M \hookrightarrow \Sigma_{A_n}$.  In particular, the pullback map is surjective.
\item A divisor class $D\in A^1(M)$ is an combinatorially ample (nef) if and only if there exists an ample (nef) divisor class $L$ on $X_{A_n}$ such that $\iota_M^*L = D$.  Combinatorially ample (nef) divisors are ample (nef) on the variety $Y_{\mathscr R(M)}$.
\end{enumerate}
\end{rem}

\begin{rem}[Relation to \ref{obs:transport}]
The geometry and the combinatorics of the permutohedral variety $X_{A_n}$ have been widely studied in various contexts including moduli spaces \cite{LM00, BB11}, convex optimization \cite{Edm70, Mur03}, Hopf monoids \cite{DF10, AA17}, and lattice polyhedra \cite{PRW08, Pos09}.  In our case, the variety $X_{{A_n}}$ plays the role of ambient variety $X$ in \ref{obs:transport}, and the wonderful compactification $Y_{\mathscr R(M)}$ the subvariety $Y\subset X$.  In \S\ref{subsection:nefpresentation}, we introduce base-point-free divisor classes on $X_{A_n}$ that generate $A^\bullet(X_{A_n})$.
\end{rem}

\section{The simplicial presentation and its monomials}\label{section:nefpresentation}

In this section, we introduce the paper's main object of study: a new presentation of the Chow ring of a matroid which we call the simplicial presentation $\myChow^\bullet
(M)$ of $A^\bullet(M)$.


\smallskip
After a combinatorial preparation in \S\ref{subsection:matquot}, we introduce the simplicial presentation in \S\ref{subsection:nefpresentation} and show that multiplying by its generators corresponds to a matroid operation called principal truncations.  In \S\ref{subsection:nestedbasis}, we extend this correspondence to establish a combinatorial interpretation of a monomial basis of the Chow ring of a matroid.

\subsection{Matroid quotients, principal truncations, and matroid intersections}\label{subsection:matquot}

We first prepare by reviewing the relevant combinatorial notions.  We point to \cite[\S7]{Oxl11} and \cite[\S2.3]{Ham17} for further details.

\medskip
Let $M$ and $M'$ be matroids on a common ground set $E = \{0,1,\ldots, n\}$.

\begin{defn} The matroid $M'$ is a \textbf{(matroid) quotient} of $M$, written $f: M' \twoheadleftarrow M$, if any every flat of $M'$ is also a flat of $M$.  In particular, if $M$ and $M'$ are loopless, then $f: M'\twoheadleftarrow M$ if and only if $\Sigma_{M'} \subseteq \Sigma_M$.
\end{defn}

\begin{eg}
Any matroid on ground set $E$ is a quotient of the Boolean matroid $U_{|E|, E}$.  Any Bergman fan of a loopless matroid is a subfan of the braid fan.
\end{eg}

\begin{eg}[Realizable matroid quotients]
Matroid quotients model linear surjections (dually, linear injections) in the following way.  Let $M$ and $M'$ have realizations by $\kk^E \twoheadrightarrow V$ and $\kk^E \twoheadrightarrow V'$ (respectively).  If there exists a commuting diagram of linear maps
\[
\begin{tikzcd}
&{\kk^E} \arrow[r, two heads] \arrow[d, equal] &V \arrow[d, two heads]\\
&{\kk^E} \arrow[r, two heads] &V'
\end{tikzcd} \qquad \textnormal{or dually}
\begin{tikzcd}
&{\PP_\kk^n} \arrow[r, hookleftarrow] &\PP V^* \\
&{\PP_\kk^n} \arrow[r, hookleftarrow] \arrow[u, equal] &\PP {V'}^* \arrow[u, hook],
\end{tikzcd}
\]
then $M'$ is a matroid quotient of $M$.  Matroid quotients $M'\twoheadleftarrow M$ arising in this way are called {realizable matroid quotients}.  We caution that a matroid quotient $M' \twoheadleftarrow M$ with both $M'$ and $M$ realizable over the same field need not be realizable (for an example, see \cite[\S1.7.5]{BGW03}).
\end{eg}

For a matroid quotient $f: M'\twoheadleftarrow M$, the \textbf{$f$-nullity} of a subset $A \subseteq E$ is defined to be
$$n_f(A) := \rk_M(A) - \rk_{M'}(A).$$
We say that $M'$ is an \textbf{elementary (matroid) quotient} of $M$ if $n_f(E) = 1$, or equivalently if $\rk(M') = \rk(M) - 1$.  An elementary quotient of $M$ corresponds to a \textbf{modular cut} $\mathcal K$ of $M$, which is a nonempty collection of flats $\mathcal K \subset \mathscr L_M$ satisfying
\begin{enumerate}
        \renewcommand{\theenumi}{\roman{enumi}}
        \item if $F_1\in \mathcal K$ and $F_1\subset F_2$, then $F_2 \in \mathcal K$, and 
        \item if $F_1, F_2 \in \mathcal K$ and $\rk_M(F_1) + \rk_M(F_2) = \rk_M(F_1 \cup F_2) + \rk_M(F_1 \cap F_2)$, then $F_1 \cap F_2 \in \mathcal K$.
\end{enumerate}
A modular cut $\mathcal K$ of $M$ defines an elementary quotient $M' \twoheadleftarrow M$ by
\[
        \mathscr L_{M'} := \left\{ F \in \mathscr L_M : F \text{ is not covered by an element of } \mathcal K \right\} \cup \mathcal K,
\]
where $F$ is \textbf{covered} by $G$ if $F \subset G$ and $\rk(F) = \rk(G)-1$.
Conversely, given an elementary quotient $f: M' \twoheadleftarrow M$, one recovers the modular cut $\mathcal K$ of $M$ defining the elementary quotient by
\[
        \mathcal K = \{ F \in \mathscr L_{M'} : n_f(F) = 1 \}.
\]
We write $M'\overset{\mathcal K} \twoheadleftarrow M$ to denote an elementary quotient of $M$ given by a modular cut $\mathcal K$.

\begin{eg}\label{eg:modularcut}
Let $M$ have a realization $\kk^E \twoheadrightarrow V$.  For $\mathcal K$ a modular cut of $M$, let $v_{\mathcal K}$ be a nonzero vector contained in $\bigcap_{F\in \mathcal K} \operatorname{span}_\kk(F)$ and not contained in any span of a flat that is not in $\mathcal K$\footnote{For such $v_{\mathcal K}$ to exist, the field $\kk$ must be large enough, and the elementary matroid quotient defined by the modular cut $\mathcal K$ must be realizable.}.  Dually, with the notation as in \S\ref{subsection:wondcpt}, we have a general hyperplane $H_{\mathcal K} = \{f\in V^* \mid f(v_{\mathcal K}) = 0\}$ in $V^*$ containing $\bigcup_{F\in \mathcal K} L_F$.  Let us consider the commuting diagram
\[
\begin{tikzcd}
&{\kk^E} \arrow[r, two heads] \arrow[d, equal] &V \arrow[d, two heads]\\
&{\kk^E} \arrow[r, two heads] &V/\operatorname{span}_\kk(v_{\mathcal K})
\end{tikzcd} \quad \textnormal{or dually,}
\begin{tikzcd}
&{\PP_\kk^E} \arrow[r, hookleftarrow] &\PP V^* \\
&{\PP_\kk^E} \arrow[r, hookleftarrow] \arrow[u, equal] &\PP {H_{\mathcal K}}.\arrow[u, hook]
\end{tikzcd}
\]
The map $\kk^E \twoheadrightarrow V/\operatorname{span}_\kk(v_{\mathcal K})$ is a realization of the matroid $M'$ of the elementary quotient $M' \overset{\mathcal K}\twoheadleftarrow M$ defined by $\mathcal K$.  Dually, with the notation as in \S\ref{subsection:wondcpt}, the associated hyperplane arrangement $\mathcal A_{\mathscr R(M')}$ is the intersection of $\PP H_{\mathcal K}$ with the coordinate hyperplanes in $\PP^E_\kk$.  Equivalently, the hyperplane arrangement $\mathcal A_{\mathscr R(M')}$ is the intersection of $\PP H_{\mathcal K}$ with the hyperplanes in the hyperplane arrangement $\mathcal A_{\mathscr R(M)}$ under the inclusion $\PP H_{\mathcal K} \subset \PP V^*$.
\end{eg}

Of particular interest in our case is when $\mathcal K$ is the interval $[F,E] \subset \mathscr L_M$, since an interval in $\mathscr L_M$ is always a modular cut.  We call the resulting elementary quotient, denoted $T_F(M)$, the \textbf{principal truncation} of $M$ associated to the flat $F$.  An explicit description of principal truncations follows.

\begin{prop}\cite[Exercise 7.2.4.]{Oxl11}\label{prop:quotbases}
The principal truncation $T_F(M)$ of a matroid $M$ associated to a flat $F \in \mathscr L_M$ of rank at least 1 has bases
$$\mathcal B(T_F(M)) = \{ B \setminus f \textnormal{ such that } B \in \mathcal B(M) \textnormal{ and } f \in B \cap F \neq \emptyset\},$$
and the flats of $T_F(M)$ partition into two sets $\mathscr L_{T_F(M)} = \mathcal K \sqcup \mathcal L$ according to their $f$-nullities by
$$
\begin{array}{l}
\mathcal K = \{G\in \mathscr L_{T_F(M)} \ | \ n_f(G) = 1\} = \{ G\in \mathscr L_M \ | \ F\subseteq G\},\\
\mathcal L = \{G\in \mathscr L_{T_F(M)} \ | \ n_f(G) = 0\} = \{G\in \mathscr L_M \ | \ \textnormal{$G$ not covered by an element in $[F,E]$}\}.
\end{array}
$$
\end{prop}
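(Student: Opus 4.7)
The strategy is to apply the standard correspondence between elementary matroid quotients and modular cuts (recalled above) to $\mathcal{K} = [F,E]$. One checks immediately that $[F,E]$ is a modular cut: it is upward-closed, and given $F_1, F_2 \in [F,E]$ we have $F_1 \cap F_2 \supseteq F$ in $[F,E]$, so the modular-pair condition is vacuous. The resulting rank function of $T_F(M)$ is
$$\rk_{T_F(M)}(A) = \begin{cases} \rk_M(A) - 1 & \text{if } \operatorname{cl}_M(A) \supseteq F,\\ \rk_M(A) & \text{otherwise,} \end{cases}$$
which I would either derive from the modular-cut construction in a couple of lines or cite as a standard consequence. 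This formula is the workhorse for both parts.

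For the basis description, both inclusions follow directly from the rank formula. Given $B \in \mathcal{B}(M)$ with $f \in B \cap F$: since $B$ is independent, $f \notin \operatorname{cl}_M(B \setminus f)$, hence $\operatorname{cl}_M(B \setminus f) \not\supseteq F$, and the rank formula yields $\rk_{T_F(M)}(B \setminus f) = \rk_M(B \setminus f) = \rk(M) - 1 = \rk(T_F(M))$, so $B \setminus f$ is a basis. Conversely, a basis $B'$ of $T_F(M)$ satisfies $|B'| = \rk(M) - 1$ and the sandwich $|B'| \geq \rk_M(B') \geq \rk_{T_F(M)}(B') = |B'|$ forces $B'$ to be independent in $M$, with $\operatorname{cl}_M(B') \not\supseteq F$ (else the rank formula would drop the rank); picking $f \in F \setminus \operatorname{cl}_M(B')$ then gives a basis $B := B' \cup \{f\}$ of $M$ with $f \in B \cap F$ and $B' = B \setminus f$.

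For the flat description, I would first apply the general recipe: the flats of an elementary quotient by modular cut $\mathcal{K}$ are the union of $\mathcal{K}$ together with those flats of $M$ not covered (in $\mathscr{L}_M$) by any element of $\mathcal{K}$. With $\mathcal{K} = [F,E]$ this gives the desired description of $\mathscr{L}_{T_F(M)}$, modulo the boundary convention that $E$ is grouped with $[F,E]$ rather than with the uncovered flats. The step needing real care is the equivalence ``$G \in \mathscr{L}_M$ with $F \not\subseteq G$ is a flat of $T_F(M)$ iff no cover of $G$ in $\mathscr{L}_M$ lies in $[F,E]$'': the rank formula shows that an element $e \notin G$ fails to extend $G$ independently in $T_F(M)$ precisely when $\operatorname{cl}_M(G \cup e)$ jumps into $[F,E]$, and $\operatorname{cl}_M(G \cup e)$ ranges exactly over the covers of $G$ in $\mathscr{L}_M$. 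Finally the $n_f$-partition is immediate from the rank formula: for $G \in [F,E]$ we have $\operatorname{cl}_M(G) = G \supseteq F$, so $n_f(G) = 1$, while for a flat $G \in \mathscr{L}_M$ with $F \not\subseteq G$ we get $n_f(G) = 0$. The main technical hurdle is really just the closure-bookkeeping for flats; everything else is a direct application of the rank formula.
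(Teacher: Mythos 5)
The paper does not actually prove this proposition; it cites it as \cite[Exercise 7.2.4]{Oxl11}, so there is no paper argument to compare against. Your verification is correct and takes the natural route: everything reduces to the rank formula $\rk_{T_F(M)}(A) = \rk_M(A) - 1$ when $\operatorname{cl}_M(A) \supseteq F$ and $\rk_{T_F(M)}(A) = \rk_M(A)$ otherwise (equivalently, $\rk_{T_F(M)}(A) = \min\{\rk_M(A),\ \rk_M(A \cup F) - 1\}$). Granting that, your two inclusions for the bases and your cover-checking for the flats go through exactly as you write them, and you rightly flag the boundary defect in the proposition's right-hand descriptions: $E$ lies in $[F,E]$ and also, having no covers, vacuously in the set of $M$-flats not covered by an element of $[F,E]$, so those two displayed sets overlap at $E$; the left-hand $n_f$-descriptions are the disjoint ones, and $n_f(E)=1$ places $E$ in $\mathcal K$. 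The only thing I would tighten is that you currently just gesture at the rank formula (``derive in a couple of lines or cite''); since it is the single load-bearing fact, it deserves to be written out, either from the general rank formula for an elementary quotient by a modular cut $\mathcal K$ (rank drops by one exactly when $\operatorname{cl}_M(A)\in\mathcal K$), or from the description of $T_F(M)$ as the principal extension of $M$ by a point placed freely on $F$ followed by contraction of that point.
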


\begin{rem}\label{eg:geomquot}
In \Cref{eg:modularcut}, if $\mathcal K = [F,E]$ for some flat $F$, then we can set $v_{\mathcal K} = v_F$, a general vector in $\operatorname{span}_\kk(F)$, and dually, we can set $H_{\mathcal K} = H_F$, a general hyperplane in $V^*$ containing $L_F$.
\end{rem}

We end our combinatorial preparation by connecting matroid quotients to Minkowski weights on braid fans via the notion of matroid intersections.  Recall that a spanning set of a matroid is a subset that contains a basis of the matroid.

\begin{defn}
The \textbf{matroid intersection}\footnote{Matroid intersection, as defined here and \cite{Wel76, Bry86}, is related to but different from the same terminology used in \cite{Edm70, Law75}, where one considers pairwise intersections of independent sets of two matroids.} of two matroids $M$ and $N$ on a common ground set $E$ is a new matroid $M\wedge N$ on $E$ whose spanning sets $\mathcal S(M\wedge N)$ are $\{S\cap S' \ | \ S\in \mathcal S(M),\ S'\in \mathcal S(N)\}$.
\end{defn}

The matroid $M\wedge N$ is a matroid quotient of both $M$ and $N$.  Matroid intersection behaves well in relation to Minkowski weights in the following way.  Recall that the isomorphism $A^\bullet(\Sigma_{A_n}) \simeq \operatorname{MW}_{n-\bullet}(\Sigma_{A_n})$ of \Cref{thm:FS} makes $\operatorname{MW}_{n-\bullet}(\Sigma_{A_n})$ into a graded ring, with multiplication called the stable intersection $\cap_{st}$.  The following proposition states that stable intersections of Bergman classes are Bergman classes of matroid intersections.

\begin{prop} \cite[Proposition 4.4]{Spe08}, \cite[Remark 2.31]{Ham17}\label{prop:matprod}
Let $M$ and $N$ be two matroids on a common ground set $E$, and let $\Delta_M$ and $\Delta_N$ be their Bergman classes, which are Minkowski weights on $\Sigma_{A_n}$.  Then we have 
$$\Delta_M \cap_{st} \Delta_N = 
\left\{\begin{array}{ll}
\Delta_{M\wedge N} & \textnormal{if $M\wedge N$ is loopless}\\
0 & \textnormal{otherwise.}\end{array}\right.$$
\end{prop}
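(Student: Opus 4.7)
The plan is to prove the identity $\Delta_M \cap_{st} \Delta_N = \Delta_{M\wedge N}$ (with the convention $\Delta_{M \wedge N} = 0$ when $M \wedge N$ has a loop) by combining a support-containment argument with the uniqueness of Bergman classes recorded in Proposition \ref{prop:bergman}.(2). First I would verify the dimension count: since $\dim \Delta_M = \operatorname{rk}(M) - 1$ and $\dim \Delta_N = \operatorname{rk}(N) - 1$, the stable intersection on the $n$-dimensional fan $\Sigma_{A_n}$ has dimension $\operatorname{rk}(M) + \operatorname{rk}(N) - n - 2$. When $M\wedge N$ is loopless, the standard identity $\operatorname{rk}(M \wedge N) = \operatorname{rk}(M) + \operatorname{rk}(N) - (n+1)$ (a consequence of matroid union/intersection duality) shows this agrees with $\dim \Sigma_{M\wedge N}$.

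The key technical step is the support containment $|\Delta_M \cap_{st} \Delta_N| \subseteq |\Sigma_{M \wedge N}|$. Using the definition of stable intersection via generic translation by $v \in N_\RR$, a top-dimensional cone $\sigma = \operatorname{Cone}(u_{S_1}, \ldots, u_{S_k})$ of $\Sigma_{A_n}$ receives nonzero weight in $\Delta_M \cap_{st} \Delta_N$ only if it lies in the transverse intersection of some maximal cone of $\Sigma_M$ with some translate of a maximal cone of $\Sigma_N$. I would translate this condition to the polytope picture via Lemma \ref{lem:looplessfaces}: such a $\sigma$ corresponds, in the outer normal fan of $Q(M) + Q(N)$, to a face which is the Minkowski sum of a loopless face of $Q(M)$ with a loopless face of $Q(N)$. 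Using standard rank-function manipulations in matroid union theory, the resulting $S_j$'s are then flats of $M \wedge N$, witnessing that $\sigma \in \Sigma_{M\wedge N}$.

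Once the support containment is established, if $M \wedge N$ is loopless then Proposition \ref{prop:bergman}.(2) forces $\Delta_M \cap_{st} \Delta_N = c \cdot \Delta_{M \wedge N}$ for some $c \in \RR$. To pin down $c = 1$, I would evaluate both sides on a single chosen maximal cone $\sigma_0 = \operatorname{Cone}(u_{F_1}, \ldots, u_{F_{\operatorname{rk}(M\wedge N) - 1}})$ arising from a maximal flag of flats of $M \wedge N$. After a generic translation, exactly one pair consisting of a maximal cone of $\Sigma_M$ and a maximal cone of $\Sigma_N$ intersects transversely along $\sigma_0$ (lattice index $1$, weight $1 \cdot 1$), giving the value $1$. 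Conversely, if $M \wedge N$ has a loop, then there is no $\Sigma_{M \wedge N}$ to support a nonzero weight, and the containment from the previous step forces $\Delta_M \cap_{st} \Delta_N = 0$.

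The main obstacle is the support-containment step: the generic translation in stable intersection mixes the cones of $\Sigma_M$ and $\Sigma_N$ in a way that does not obviously respect the lattice-of-flats structure. The cleanest route is to pass to the Minkowski-sum polytope picture, where faces of $Q(M) + Q(N)$ factor as parallel face-pairs of $Q(M)$ and $Q(N)$, and then to characterize exactly which pairs yield a loopless common restriction. Equivalently, one can carry out the argument via Kronecker duality (Lemma \ref{lem:kronecker}) and unravel the stable intersection formula directly on the level of cone weights, though this is more calculation-heavy.
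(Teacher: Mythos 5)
The paper does not prove Proposition~\ref{prop:matprod}: it imports it from \cite[Proposition 4.4]{Spe08} and \cite[Remark 2.31]{Ham17}, so there is no internal argument to compare against. Taking your sketch on its own merits, the skeleton is reasonable and is in the spirit of Speyer's argument: a dimension check, support containment into $\Sigma_{M\wedge N}$, the uniqueness in Proposition~\ref{prop:bergman}.(2) to deduce $\Delta_M\cap_{st}\Delta_N = c\,\Delta_{M\wedge N}$, a local weight computation to show $c=1$, and a separate loop case. But the places your write-up calls ``standard manipulations'' are exactly where the content lives, and several assertions are not quite right as stated.

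First, $\operatorname{rk}(M\wedge N) = \operatorname{rk}(M)+\operatorname{rk}(N)-(n+1)$ is not an unconditional consequence of matroid union/intersection duality: that duality gives only $\operatorname{rk}(M\wedge N) = \operatorname{rk}(M)+\operatorname{rk}(N)-|E| + \max_{T\subseteq E}\bigl(|T|-\operatorname{rk}_M(T)-\operatorname{rk}_N(T)\bigr)$, and you must first prove that looplessness of $M\wedge N$ kills the correction term (equivalently, that some bases satisfy $B_M\cup B_N = E$); that is a genuine lemma entangled with the transversality the proposition characterizes, not a free prerequisite. Second, and this is the real issue, the support-containment step is gestured at rather than proven. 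The fan $\Sigma_{M\wedge N}$ is a skeleton of the normal fan of $Q(M\wedge N)$, which is not the polytope $Q(M)+Q(N)$, so passing to ``Minkowski sums of loopless faces of $Q(M)$ and $Q(N)$'' via Lemma~\ref{lem:looplessfaces} still leaves the task of showing that any chain $\emptyset\subsetneq S_1\subsetneq\cdots\subsetneq S_d\subsetneq E$ arising from a transverse intersection of $\Sigma_M$ with a generic translate of $\Sigma_N$ is a flag of flats of $M\wedge N$. That is the theorem, and ``standard rank-function manipulations'' does not close it. Third, pinning $c=1$ needs an actual count: you assert that exactly one cone pair meets $\sigma_0$ transversely under generic translation with lattice index $1$, but you would need to justify both uniqueness of the contributing pair and the index (the latter should use that primitive rays of $\Sigma_{A_n}$ are $0/1$ vectors, so the relevant determinants are $\pm 1$). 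Finally, the loop case does not simply fall out of the support containment as you set it up: when $M\wedge N$ has a loop there is no subfan $\Sigma_{M\wedge N}\subset\Sigma_{A_n}$ for the support to land inside, so vanishing of the stable intersection requires its own argument, most directly by running the fan-displacement computation again and showing no cone receives nonzero weight.
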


\subsection{The variables of the simplicial presentation}\label{subsection:nefpresentation} We now define a new presentation $\myChow^\bullet(M)$ of the Chow ring of a matroid $M$, and discuss its first properties.  The key result here is that the variables of $\myChow^\bullet(M)$ correspond to principal truncations of $M$.

\medskip
We prepare by noting a distinguished set of nef divisor classes on $\Sigma_{A_n}$ and their polytopes considered in \cite{Pos09}.  For a nonempty subset $S$ of $E$, let
$$\nabla_S := \operatorname{Conv}(-\be_i \ | \ i \in S) \subset \RR^E$$
denote the negative standard simplex of $S$.  As the edges of $\nabla_S$ are parallel translates of $\be_i - \be_j$ for $i\neq j \in S$, Proposition \ref{prop:genperm} (in the form of \Cref{rem:genpermshift}) implies that $\nabla_S$ is a polytope with the corresponding nef divisor class
$$h_S := -\sum_{S\subseteq T} z_T \in A_{FY}^1(\Sigma_{A_n}).$$
These divisors were considered in \cite{Pos09}\footnote{In \cite{Pos09} the author uses $y_S$ to denote the nef divisor of the standard simplex of $S$ instead of the negative standard simplex.  This difference is due to using inner normal fans instead of outer normal fans of polytopes.} and implicitly in \cite{Ham17}.  We now consider the presentation of $A^\bullet(M)$ given by pullbacks of these nef divisor classes corresponding to (negative) standard simplices.  For $M$ a loopless matroid on $E$, and $\emptyset \neq S\subseteq E$, denote $h_S(M) := \iota_M^*h_S$.  If $F = \operatorname{cl}_M(S)$ is the smallest flat containing $S$, note that we have
\begin{equation}\label{eqn:S=F}
h_S(M) := \iota_M^* h_S = - \sum_{S \subseteq T} z_T(M) = - \sum_{F\subseteq G \in \mathscr L_M} z_G(M) = \iota_M^* h_F,
\end{equation}
as $z_T(M) = \iota_M^*z_T = 0$ for all $T\subseteq E$ not a flat of $M$.  By construction, the elements $h_F(M) \in A^1(M)$ are (combinatorially) nef divisor classes on $M$.  We will simply write $h_F$ for $h_F(M)$ when there is no confusion.

\begin{defn}\label{dfn:simplicial}
For $M$ a loopless matroid on $E$, the \textbf{simplicial presentation} $\myChow^\bullet(M)$ of the Chow ring of $M$ is the presentation of $A^\bullet(M)$ whose generators are $\{h_F\}_{F\in \mathscr L_M\setminus\{\emptyset\}}$ where
$$h_F := -\sum_{F \subseteq G} z_G \in A_{FY}^\bullet(M).$$
We call $\{h_F\}_{F\in \mathscr L_M\setminus\{\emptyset\}}$ the \textbf{simplicial generators}.
\end{defn}

The variable $h$ here stands for ``hyperplane''; for the geometric origin of the simplicial presentation see Remarks \ref{rem:geomhyper} and \ref{rem:simplicialpullback}.  The linear change of variables from $\{z_F\}_{F\in \mathscr L_M\setminus\{\emptyset\}}$ to $\{h_F\}_{F\in \mathscr L_M\setminus\{\emptyset\}}$ is evidently invertible, given by an upper triangular matrix.  Explicitly, by M\"obius inversion we have
$$-z_F = \sum_{F \subseteq G} \mu(F, G) h_G$$
where $\mu$ is the M\"obius function on the lattice $\mathcal L_M$. Thus, the explicit presentation of $\myChow^\bullet(M)$ is
\[
        \myChow^\bullet(M) :=  \RR[h_F : F \in \mathscr L_M \setminus \{\emptyset\} ] \,/\, (I + J)
\]
where
\[
        I = \left\langle h_a : a \in \mathfrak A(M) \right\rangle \textnormal{ and } J = \left\langle \left( \sum_{F \subseteq G} \mu(F, G) h_G \right) \left( \sum_{F' \subseteq G'} \mu(F', G') h_{G'}\right) : F, F' \text{ incomparable} \right\rangle.
\]
Denote by $\mathscr L_M^{\geq 2}$ the set of flats of $M$ of rank at least 2.  Noting that $h_a = 0 \in \myChow^\bullet(M)$ for any atom $a\in \mathfrak A(M)$, we define $\{h_F \ | \ F\in \mathscr L_M^{\geq 2}\}$ to be the \textbf{nontrivial simplicial generators} of the Chow ring of $M$.  They form a basis of $\myChow^1(M)$.

\begin{rem}\label{rem:EHKRS}
When the matroid $M$ is the cyclic matroid of the complete graph $K_{n-1}$ on $n-1$ vertices, the Chow ring of $M$ is the cohomology ring of the Deligne-Knudson-Mumford space $\overline{\mathcal M}_{0,n}$ of rational curves with $n$ marked points \cite[\S4.3]{dCP95}, \cite[Theorem 6.4.12]{MS15}.  In this case, by using the minimal building set instead of the maximal building set, the simplicial presentation recovers the Etingof-Henriques-Kamnitzer-Rains-Singh presentation of the cohomology ring of $\overline{\mathcal M}_{0,n}$ \cite{EHKR10, Sin04}.  In this presentation, the author of \cite{Dot19} showed that the cohomology ring of $\overline{\mathcal M}_{0,n}$ is Koszul because it has a quadratic Gr\"obner basis.  In the classical presentation, the Chow ring of any matroid with rank $>3$ has no quadratic Gr\"obner basis.
\end{rem}

The following theorem, which relates the variables of the simplicial presentation to principal truncations, is the key property of the simplicial presentation that we use throughout this paper.  Let us first fix a notation:

\proof[Notation] For a nonempty subset $S\subseteq E$, we denote by $H_S$ the matroid with bases
$$\mathcal B(H_S) := \{ E \setminus i : i \in S\},$$
or equivalently, $H_S = U_{|E \setminus S|, E\setminus S} \oplus U_{|S|-1, S}$.

\begin{thm}\label{thm:hyper}
Let $M$ be a loopless matroid on $E$, and $S$ a nonempty subset of $E$.  
Write $F$ for the smallest flat of $M$ containing $S$.  Then $H_S \wedge M = T_F(M)$, and the nef divisor class $h_S \in \myChow^1(\Sigma_{A_n})$ satisfies 
\[
h_S \cap \Delta_M =\begin{cases} \Delta_{T_F(M)} &\textnormal{if $\operatorname{rk}_M(S) > 1$}\\ 0 &\textnormal{otherwise}.
\end{cases}
\]
\end{thm}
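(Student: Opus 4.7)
My plan is to pull everything back to the braid fan $\Sigma_{A_n}$ and invoke Proposition \ref{prop:matprod}. By the functoriality of the cap product (Remark \ref{rem:functorial}), the class $h_S \cap \Delta_M$ can be computed by treating $h_S$ as a class in $A^1(\Sigma_{A_n})$ and $\Delta_M$ as a Minkowski weight on $\Sigma_{A_n}$; via the isomorphism of Theorem \ref{thm:FS}, this rewrites as the stable intersection $\delta_{\Sigma_{A_n}}(h_S) \cap_{st} \Delta_M$. The crux of the proof is then the identification
\[
\delta_{\Sigma_{A_n}}(h_S) = \Delta_{H_S} \in \operatorname{MW}_{n-1}(\Sigma_{A_n}),
\]
after which Proposition \ref{prop:matprod} yields $h_S \cap \Delta_M = \Delta_{H_S \wedge M}$ when $H_S \wedge M$ is loopless and $0$ otherwise, reducing the theorem to two combinatorial matroid assertions.

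For the identification, I would unpack Theorem \ref{thm:nefdef} together with Remark \ref{rem:genpermshift} to confirm that the polytope associated to $h_S$ is the negative standard simplex $\nabla_S = \operatorname{Conv}(-\be_i : i \in S)$, translated into $N_\RR^\vee$. Its edges $[-\be_i, -\be_j]$ for distinct $i, j \in S$ all have lattice length one, so by Proposition \ref{prop:trophyper} the Minkowski weight $\delta_{\Sigma_{A_n}}(h_S)$ assigns value one to every cone of $\Sigma_{A_n}$ lying in a codimension-one cone of the normal fan of $\nabla_S$, and zero elsewhere. On the other hand, $H_S$ has base polytope $\be_E + \nabla_S$, and the face $\nabla_{S'} \subseteq \nabla_S$ indexed by $S' \subseteq S$ corresponds to the matroid $H_{S'}$, which one checks is loopless exactly when $|S'| \geq 2$. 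By Lemma \ref{lem:looplessfaces}, the support of $\Delta_{H_S}$ therefore coincides with the codimension-one skeleton of the normal fan of $\nabla_S$; since both weights equal one on top-dimensional cones of $\Sigma_{H_S}$, uniqueness of the Bergman class on its support (Proposition \ref{prop:bergman}(2)) forces the two Minkowski weights to agree.

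To finish, I would verify $H_S \wedge M = T_F(M)$ by comparing spanning sets. Using $\mathcal S(H_S) = \{E\} \cup \{E \setminus i : i \in S\}$, one obtains
\[
\mathcal S(H_S \wedge M) = \mathcal S(M) \cup \{C \subseteq E : C \cup \{i\} \in \mathcal S(M) \text{ for some } i \in S\},
\]
and the second piece simplifies to $\{C : \operatorname{rk}_M(C) = \operatorname{rk}(M) - 1 \text{ and } S \not\subseteq \operatorname{cl}_M(C)\}$. The condition $S \not\subseteq \operatorname{cl}_M(C)$ is equivalent to $F \not\subseteq \operatorname{cl}_M(C)$ because $F = \operatorname{cl}_M(S)$ is the smallest flat containing $S$, which matches the spanning sets of $T_F(M)$ obtained by combining Proposition \ref{prop:quotbases} with the rank formula for the elementary quotient by the modular cut $[F, E]$. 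Finally, $T_F(M)$ is loopless iff $\operatorname{rk}_M(F) \geq 2$: if $\operatorname{rk}_M(F) = 1$ every basis of $M$ meets $F$ in exactly one element, so each element of $F$ becomes a loop of $T_F(M)$, whereas if $\operatorname{rk}_M(F) \geq 2$ any given element of $E$ can be extended to a basis of $M$ intersecting $F$ in at least two elements, precluding loops. Since $\operatorname{rk}_M(F) = \operatorname{rk}_M(S)$, the looplessness dichotomy corresponds exactly to $\operatorname{rk}_M(S) > 1$ versus $\operatorname{rk}_M(S) = 1$. The main technical obstacle in the plan is the identification of $\delta_{\Sigma_{A_n}}(h_S)$ with $\Delta_{H_S}$, since it requires a careful alignment between the edges of $\nabla_S$ and the loopless faces of its normal fan.
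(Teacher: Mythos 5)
Your proof is correct and follows the same overall decomposition as the paper's: identify $\delta_{\Sigma_{A_n}}(h_S)$ with $\Delta_{H_S}$ via the base polytope $\be_E + \nabla_S$ and Proposition \ref{prop:trophyper}, establish the matroid identity $H_S \wedge M = T_F(M)$, and then apply Proposition \ref{prop:matprod}. The only variation is that the paper checks $H_S \wedge M = T_F(M)$ by comparing bases (via the intermediate step $H_S \wedge M = H_F \wedge M$) while you compare spanning sets directly; these are equivalent. One small slip worth fixing: when $\rk_M(F)=1$ it is not true that \emph{every} basis of $M$ meets $F$ (a basis may avoid the rank-one flat $F$ entirely); the assertion you actually need, and which is true, is that any basis of $M$ that does meet $F$ meets it in exactly one element, so every basis $B\setminus f$ of $T_F(M)$ is disjoint from $F$ and hence the elements of $F$ become loops.
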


The theorem will follow mostly from the following lemma.

\begin{lem} \label{lem:multquot}
Let $M$ and $H_S$ be as in the statement of Theorem \ref{thm:hyper}, and let  $F$ be the smallest flat of $M$ containing $S$.   Then we have
\begin{equation}\label{eq:intersectionTruncation}
H_S \wedge M = T_F(M),
\end{equation}
and consequently,
\begin{equation}\label{eq:minkowskiCap}
 \Delta_{H_F}\cap_{st}\Delta_M  = 
\left\{\begin{array}{ll}
\Delta_{T_F(M)} & \textnormal{if $\operatorname{rk}_M(F) > 1$}\\
0 & \textnormal{otherwise.}\end{array}\right.
\end{equation}
\end{lem}

\begin{proof}
We first show that \eqref{eq:minkowskiCap} follows from \eqref{eq:intersectionTruncation}.
A matroid is loopless if and only if the empty set $\emptyset$ is a flat. By \Cref{prop:quotbases}, the matroid $T_F(M)$ is thus loopless if and only if $\emptyset$ is not covered by an element in the interval $[F,E]$. As claimed, this happens if and only if $\operatorname{rk}_M(F) > 1$.

We now prove \eqref{eq:intersectionTruncation}.
By definition of $H_S \wedge M$, the minimal elements in the set of spanning sets $\mathcal S(H_S \wedge M)$ are $B\setminus i$ where $B\in \mathcal B(M)$ and $i\in B\cap S \neq \emptyset$.  Since minimal spanning sets are bases, we have
\[
\mathcal B(H_S \wedge M) = \{B\setminus i \textnormal{ such that } B\in \mathcal B(M),\ i \in B\cap S \neq \emptyset\}.
\]  When $S = F$, this is the description of the bases of $T_F(M)$ in Proposition \ref{prop:quotbases}, so it remains to show $H_S \wedge M = H_F \wedge M$.  Evidently, we have $\mathcal B(H_S \wedge M) \subseteq \mathcal B(H_F \wedge M)$ since $S\subseteq F$.  For the other inclusion, suppose we have a basis $B\setminus f$ of $H_F \wedge M$ where $B\in \mathcal B(M)$ and $f\in B\cap F \neq \emptyset$.  
We show that there is an element $s\in S$ such that $B' = (B\setminus f)\cup s$ is also a basis of $M$, which implies that $B\setminus f = B' \setminus s$ is a basis of $H_S\wedge M$.
First, because $F$ is the closure of $S$, we have $\operatorname{rk}_M( (B\setminus f) \cup S) = \operatorname{rk}_M( (B\setminus f) \cup F)$.  We also have $ \operatorname{rk}_M( (B\setminus f) \cup F) = \operatorname{rk}_M(B \cup F) = \operatorname{rk}_M(E)$ since $B$ is a basis.  Since $B\setminus f$ is independent in $M$, we thus conclude from $\operatorname{rk}_M( (B\setminus f) \cup S) = \operatorname{rk}_M(E)$ that $B\setminus f$ can be extended to a basis by an element in $S$.  That is, there exists an element $s\in S$ such that $(B \setminus f) \cup s$ is a basis of $M$, as desired.
\end{proof}

\begin{proof}[Proof of \Cref{thm:hyper}]
Let $\delta_{\Sigma_{A_n}}: A^\bullet(\Sigma_{A_n}) \overset\sim\to \operatorname{MW}_{n-\bullet}(\Sigma_{A_n})$ be the isomorphism map in \Cref{thm:FS}.  We claim that $\delta_{\Sigma_{A_n}}(h_S) = \Delta_{H_S}$, which is proved in \Cref{lem:hyper} below.  Our desired statement then follows immediately from \Cref{lem:multquot}, since $h_S \cap \Delta_M = (\delta_{\Sigma_{A_n}}(h_S)) \cap_{st} \Delta_M$ by the definition of stable intersection $\cap_{st}$.
\end{proof}

\begin{lem}\label{lem:hyper}
Let $\delta_{\Sigma_{A_n}}: A^\bullet(\Sigma_{A_n}) \overset\sim\to \operatorname{MW}_{n-\bullet}(\Sigma_{A_n})$ be the isomorphism map in \Cref{thm:FS}.  Then we have
\[
\delta_{\Sigma_{A_n}}(h_S) = \Delta_{H_S}.
\]
\end{lem}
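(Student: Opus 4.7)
The plan is to apply \Cref{prop:trophyper} to the lattice simplex $\nabla_S$, compute $\Delta_{\nabla_S} = \delta_{\Sigma_{A_n}}(h_S)$ cone-by-cone, and match the result with $\Delta_{H_S}$ via the combinatorial description of flats.

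First, I would observe that $\nabla_S = \operatorname{Conv}(-\be_i : i \in S)$ is a lattice simplex whose edges, when $|S| \geq 2$, are $[-\be_i, -\be_j]$ for distinct $i, j \in S$, each of lattice length $1$.  By \Cref{prop:trophyper}, $\Delta_{\nabla_S}(\tau)$ equals $1$ if the wall $\tau \in \Sigma_{A_n}(n-1)$ is contained in the outer normal cone $\sigma_{ij}$ of some edge $[-\be_i, -\be_j]$, and $0$ otherwise.  The degenerate case $|S| = 1$ is separate: $\nabla_S$ is a point, so the atom relation $\sum_{T \ni i} z_T = 0$ in $A^\bullet_{FY}(\Sigma_{A_n})$ forces $h_S = 0$, while $H_S$ has only the single basis $E \setminus S$ and hence a loop, so $\Delta_{H_S} = 0$.

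Next, working with representatives in $\RR^E$ modulo $\RR \be_E$, one computes
\[
\sigma_{ij} = \{u \in N_\RR : u_i = u_j \leq u_k \text{ for all } k \in S\}.
\]
Testing the ray generator $u_T$ against these inequalities shows $u_T \in \sigma_{ij}$ iff either $S \subseteq T$ or $T \cap \{i, j\} = \emptyset$.  Hence $\tau = \operatorname{Cone}(u_{S_1}, \ldots, u_{S_{n-1}})$ lies in $\sigma_{ij}$ iff every $S_k$ satisfies one of these two alternatives with respect to the same pair $(i, j)$.  The key combinatorial step is then: there exist distinct $i, j \in S$ with $\tau \subseteq \sigma_{ij}$ if and only if $|S \setminus S_k| \neq 1$ for every $k$.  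The forward direction is immediate; for the reverse, let $k^\ast$ be the largest $k$ with $S_{k^\ast} \not\supseteq S$ (if any), whence $|S \setminus S_{k^\ast}| \geq 2$, and any two $i, j \in S \setminus S_{k^\ast}$ work simultaneously for all $k \leq k^\ast$ by the chain property, while for $k > k^\ast$ the inclusion $S \subseteq S_k$ holds automatically.

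Finally, a direct rank computation gives $\rk_{H_S}(T) = |T|$ if $S \not\subseteq T$ and $|T| - 1$ if $S \subseteq T$, from which one sees that adding $j \notin T$ leaves the rank unchanged precisely when $S \setminus T = \{j\}$.  Thus $T$ is a flat of $H_S$ iff $|S \setminus T| \neq 1$, so $\Delta_{H_S}(\tau) = 1$ exactly when every $S_k$ satisfies $|S \setminus S_k| \neq 1$, matching $\Delta_{\nabla_S}(\tau) = 1$.  Both weights take only the value $1$ on their common support, so $\Delta_{\nabla_S} = \Delta_{H_S}$.  The main obstacle is coordinating a \emph{single} pair $(i, j) \in S \times S$ across the entire chain in the combinatorial step; once that is in hand, the comparison to flats of $H_S$ is bookkeeping.
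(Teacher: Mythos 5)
Your proof is correct, and it takes a genuinely more explicit route than the paper's. Both arguments start from \Cref{prop:trophyper}, which reduces the claim to showing that the support of the codimension-one Minkowski weight of $\nabla_S$ agrees with that of $\Delta_{H_S}$ (the lattice lengths being $1$ handles the weights). The paper dispatches this by observing that the translate $\nabla_S + \be_E$ is exactly the base polytope $Q(H_S)$ and invoking \Cref{lem:looplessfaces}, which identifies $\lvert \Delta_{H_S}\rvert$ with the union of normal cones to non-vertex faces of $Q(H_S)$; since every non-vertex face of a simplex of this shape is a loopless matroid polytope, the support is the $(n-1)$-skeleton of the normal fan of $\nabla_S$, and the identification is immediate. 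You instead compute both sides by hand: you describe the outer normal cone $\sigma_{ij}$ of each edge, determine which rays $u_T$ it contains (Boolean case analysis on $T \cap \{i,j\}$), use the chain structure to show that a single pair $(i,j)$ can be chosen to work uniformly across a flag, extract the condition $\lvert S \setminus S_k\rvert \neq 1$, and then independently compute the rank function and flats of $H_S$ to arrive at the same condition. The payoff of the paper's route is conceptual economy and the illuminating observation that $\nabla_S + \be_E = Q(H_S)$; the payoff of yours is that it is self-contained and does not rely on \Cref{lem:looplessfaces}, at the cost of a longer, more combinatorial verification. Your separate treatment of $\lvert S\rvert = 1$ (where $h_S = 0$ by the atom relations and $H_S$ has a loop) is a nice touch that the paper's proof leaves implicit.
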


\begin{proof}
We claim that the support $|\Delta_{H_S}|$ of $\Delta_{H_S}$ is equal to the support of the $(n-1)$-skeleton of the outer normal fan of negative standard simplex $\nabla_S$.  If this is the case, then \Cref{prop:trophyper} implies $\delta_{\Sigma_{A_n}}(h_S) = \Delta_{H_S}$ because all the edges of the negative standard simplex $\nabla_S$ have lattice length 1.

Now, for the claim, note first that the translate $\nabla_S + \be_E$ of $\nabla_S$ is
$\operatorname{Conv}(\be_{E\setminus i} \ |\ i \in S) \subset \RR^E,$ 
which is equal to the base polytope $Q(H_S)$ of $H_S$.  Since every face of $Q(H_S)$, except for the vertices, are base polytopes of loopless matroids, by \Cref{lem:looplessfaces} the support of $|\Delta_{H_S}|$ equals the support of the $(n-1)$-dimensional skeleton of the outer normal fan of $\nabla_S$.
\end{proof}

\Cref{thm:hyper} encodes the combinatorics of the following geometric motivation for the simplicial presentation.

\begin{rem}\label{rem:geomhyper}
We recall the following standard fact in algebraic geometry.  Let $L\subset V^*$ be an inclusion of vector spaces, so that $\PP L$ is a linear subvariety of $\PP V^*$.  Let $X = \operatorname{Bl}_{\PP L}\PP V^*$ be the blow-up of $\PP V^*$ along $\PP L$, which is also the graph of the rational map $\PP V^* \dashrightarrow \PP(V^*/L)$, and let $\pi: X \to \PP V^*$ be the blow-down map.   Let $h = c_1(\mathscr O_{\PP V^*}(1))$ denote the hyperplane class, and $\mathcal E$ denote the exceptional divisor of the blow-up.  Then the map $\operatorname{Bl}_{\PP L} \PP V^* \to \PP(V^* /L)$ corresponds to the linear series $| \mathscr O_X(\pi^* h - \mathcal E)|$, so the sections of the line bundle $\mathscr O_X(\pi^* h - \mathcal E)$ correspond to the hyperplanes in $\PP V^*$ that contain $\PP L$.

Now, suppose $M$ has a realization $\mathscr R(M)$ over an algebraically closed field $\mathbbm k$ as $\PP V^* \hookrightarrow \PP^n$, and let notations be as in \S\ref{subsection:wondcpt}.  The geometry of $A^\bullet(Y_{\mathscr R(M)}) \simeq A^\bullet(M)$ in \Cref{rem:matgeom} implies
\[
h_F = \sum_{G\supseteq F} - z_G = -z_E - \sum_{G\supseteq F} x_G = \pi_{\mathscr R(M)}^* h - \sum_{G\supseteq F} \mathcal E_G,
\]
and hence $h_F$ represents the divisor class of the strict transform of a general hyperplane in $\PP V^*$ containing the linear subvariety $\PP L_F$.  Thus multiplying by $h_F$ corresponds to intersecting by a general hyperplane in $\PP V^*$ containing $\PP L_F$, which corresponds to the principal truncation $T_F(M)$ by \Cref{eg:modularcut} (in the form of \Cref{eg:geomquot}).  More precisely, we have $h_F \cdot [Y_{\mathscr R(M)}] = [Y_{\mathscr R(T_F(M))}] \in A^\bullet(X_{\Sigma_{A_n}})$.  \Cref{thm:hyper} is the combinatorial mirror of this geometric observation.
\end{rem}

\begin{rem}[Relation to \ref{obs:transport}]\label{rem:simplicialpullback} Suppose $M$ has a realization $\mathscr R(M)$ by $\PP V^* \hookrightarrow \PP^n$.  By the second description in \Cref{rem:wndcpt}.(2), the wonderful compactification $Y_{\mathscr R(M)}$ is embedded in the product of projective spaces $\prod_{F\in \mathscr L_M \setminus \{\emptyset\}} \PP(V^*/L_F)$.  We described $h_F$ as a divisor class represented by the strict transform of a general hyperplane in $\PP V^*$ containing $\PP L_F$ in the previous \Cref{rem:geomhyper}.  Alternatively, the variable $h_F$ thus represents the base-point-free divisor obtained as the hyperplane class pullback of the map $Y_{\mathscr R(M)} \to \PP(V^* / L_F)$. In other words, the divisor classes $h_S \in \myChow^\bullet(M)$ play the role of ``base-point-free divisor classes,'' and we have interpreted the hyperplane class pullbacks of the maps they define as principal truncations.
\end{rem}

\begin{rem}
In the classical presentation $A^\bullet(\Sigma_{A_n})$, the cap product $x_S \cap \Delta_M$ is almost never a Bergman class of a matroid---it is a Minkowski weight which may have negative weights on some cones.  This reflects the geometry that the divisor $x_S$ is effective but usually not nef.
\end{rem}

\subsection{A monomial basis of the simplicial presentation and relative nested quotients}\label{subsection:nestedbasis}

We introduce the notion of relative nested quotients, which are relative generalizations of (loopless) Schubert matroids in matroid theory, and we show that they are in 
in bijection with elements of a monomial basis of $\myChow^\bullet(M)$.

\medskip
We start by producing a monomial basis of $\myChow^\bullet(M)$ via the Gr\"obner basis computation in \cite{FY04}.  Pick a total order $>$ on elements of $\mathscr  L_M$ such that $F>G$ if $\operatorname{rk}_M(F) \leq \operatorname{rk}_M(G)$, and take the induced lex monomial order on $A^\bullet_{FY}(M)$.  A Gr\"obner basis for $A^\bullet_{FY}(M)$ was given as follows.

\begin{thm}\cite[Theorem 1]{FY04}\label{thm:fy04gb}
The following form a Gr\"obner basis for the ideal of $A^\bullet_{FY}(M)$:
$$
\left\{\begin{array}{lr}
z_Fz_G &\textnormal{ $F$ and $G$ are incomparable nonempty flats} \\
z_F\left( \sum_{H \geq G} z_H \right)^{\operatorname{rk} G - \operatorname{rk} F}  & \textnormal{ $F\subsetneq G$ nonempty flats} \\
\left(\sum_{H \geq G} z_H\right)^{\operatorname{rk} G}  & \textnormal{$G$ a nonempty flat}
\end{array}\right. .
$$
\end{thm}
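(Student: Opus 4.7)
The plan is to prove the Gr\"obner basis statement by combining two ingredients: (a) each listed polynomial lies in the defining ideal $I$ of $A_{FY}^\bullet(M)$, and (b) the monomials that are standard with respect to the claimed leading terms span a graded vector space whose Hilbert series agrees with that of $A_{FY}^\bullet(M)$. Together these force $\operatorname{in}(G) = \operatorname{in}(I)$, since the inclusion $\operatorname{in}(G) \subseteq \operatorname{in}(I)$ is automatic from (a), and matching Hilbert series upgrades this to equality, which is equivalent to $G$ being a Gr\"obner basis.

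For (a), the incomparable quadratics $z_F z_G$ lie in $I$ by definition. For the third family $(\sum_{H \supseteq G} z_H)^{\rk G}$, I would fix an atom $a \leq G$ and use the atom relation $\sum_{F \supseteq a} z_F = 0$ to rewrite, modulo $I$, the factor $\sum_{H \supseteq G} z_H$ as $-\sum_{F \supseteq a,\, F \not\supseteq G} z_F$. Expanding the $\rk(G)$-th power and using the incomparable quadratics to discard cross terms from non-chains, every surviving monomial is indexed by a chain $F_1 \subsetneq \cdots \subsetneq F_{\rk(G)}$ of flats each containing $a$ but not $G$; since any such chain has length at most $\rk(G) - 1$, the whole power vanishes. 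The second family is handled by an analogous argument, fixing an atom $a \leq F$ and noting that any chain in $\mathscr L_M$ starting at $F$ and staying below every flat containing $G$ has length strictly less than $\rk(G) - \rk(F)$.

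For (b), I would first identify the leading terms. Under the chosen order, $G$ is the largest-indexed flat among $\{H : H \supseteq G\}$, so the second and third families have leading monomials $z_F z_G^{\rk(G) - \rk(F)}$ and $z_G^{\rk(G)}$ respectively. The standard monomials are therefore exactly the products $z_{F_1}^{a_1} \cdots z_{F_k}^{a_k}$ indexed by chains $\emptyset \subsetneq F_1 \subsetneq \cdots \subsetneq F_k$ of nonempty flats subject to $1 \leq a_1 < \rk(F_1)$ and $1 \leq a_i < \rk(F_i) - \rk(F_{i-1})$ for $i \geq 2$. The closing step is to match the resulting generating function degree by degree against the Hilbert series of $A_{FY}^\bullet(M)$, which is independently known from the atomic-lattice recursion of Yuzvinsky and Feichtner-Yuzvinsky and can be expressed via the reduced characteristic polynomial of $M$.

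The principal obstacle is step (a), specifically confirming that $(\sum_{H \supseteq G} z_H)^{\rk(G)} \in I$. The leading-term identification and Hilbert series comparison are largely bookkeeping, but the vanishing argument must carefully orchestrate atom relations against incomparable quadratics, applying the rank bound on chains of flats at exactly the right juncture. A conceptually cleaner route, using the language of the present paper, would recognize $-\sum_{H \supseteq G} z_H$ as the simplicial generator $h_G$ and exploit \Cref{thm:hyper}: iteratively capping $\Delta_M$ by $h_G$ corresponds to repeated principal truncations of $M$ by $G$, which strictly decrease rank and eventually produce a matroid containing a loop, forcing the $\rk(G)$-th iterate to vanish.
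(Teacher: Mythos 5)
The paper does not prove this statement; it is quoted verbatim from \cite[Theorem 1]{FY04} and used as a black box to derive \Cref{prop:gbH} and \Cref{cor:monombasis}. So there is no in-paper argument to compare against, and your proposal must stand on its own.

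There is a genuine gap in your verification of ideal membership, step (a). For the third family you rewrite $\sum_{H\supseteq G} z_H \equiv -\sum_{F\supseteq a,\,F\not\supseteq G} z_F$ modulo $I$ and claim that after expanding the $\rk(G)$-th power and killing non-chain cross terms via the incomparable quadratics, ``every surviving monomial is indexed by a chain $F_1\subsetneq\cdots\subsetneq F_{\rk G}$ of flats each containing $a$ but not $G$,'' and that ``any such chain has length at most $\rk(G)-1$.'' Both halves of that sentence fail. First, the quadratics only kill products over \emph{incomparable} pairs, so repeated variables $z_F^2, z_F^3, \ldots$ survive; you cannot assume the surviving monomials are squarefree without already invoking a power relation, which is circular. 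Second, the chain-length bound is simply false: take $M=U_{3,3}$ on $E=\{0,1,2\}$, $G=\{0,1\}$ (so $\rk G = 2$), and $a=\{0\}$. Then $\{0\}\subsetneq\{0,2\}$ is a chain of length $\rk G = 2$ of flats containing $a$ but not $G$, and $z_{\{0\}}z_{\{0,2\}}$ is a nonzero basis element of $A_{FY}^2$. A direct computation of the atom rewriting $(z_{\{0,1\}}+z_E)^2 = (z_{\{0\}}+z_{\{0,2\}})^2 = z_{\{0\}}^2 + 2z_{\{0\}}z_{\{0,2\}} + z_{\{0,2\}}^2$ shows that the vanishing is a \emph{cancellation} among three nonzero terms ($\int_M$ of each being $-1$, $2$, $-1$), not a term-by-term annihilation. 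So the vanishing cannot be proven by the chain-length argument; the actual FY04 argument is more delicate. The analogous reasoning for the second family inherits the same problem.

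Your suggested ``conceptually cleaner route'' via \Cref{thm:hyper} is circular within this paper's logical architecture. \Cref{thm:hyper} gives $h_G^{\rk G}\cap\Delta_M = 0$ in $\operatorname{MW}_\bullet(\Sigma_M)$, i.e.\ that $h_G^{\rk G}$ lies in the kernel of the cap-product map $\delta_M\colon A^\bullet(M)\to\operatorname{MW}_{d-\bullet}(\Sigma_M)$. To upgrade this to $h_G^{\rk G}=0$ in $A^\bullet(M)$ you need $\delta_M$ to be injective, which is precisely the Poincar\'e duality statement of \Cref{cor:poincare}. But in this paper Poincar\'e duality is derived from \Cref{thm:transport}, whose injectivity step leans on \Cref{thm:monomnested}, which in turn uses the nested basis \Cref{cor:monombasis}/\Cref{prop:gbH}, whose proof cites the very Gr\"obner basis theorem you are trying to prove. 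Finally, the closing Hilbert-series comparison in step (b) is also not as innocuous as you frame it: establishing the Hilbert series of $A_{FY}^\bullet(M)$ for an arbitrary (non-realizable) matroid is essentially what the Gr\"obner basis theorem accomplishes in \cite{FY04}, so appealing to it as ``independently known'' re-imports the statement you want. With these repairs the plan is salvageable, but the vanishing step needs an honest cancellation argument rather than a chain-length bound, and the Poincar\'e-duality shortcut must be abandoned.
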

In \cite{FY04}, the authors associate a ring to an atomic lattice with a chosen ``building set,'' and \cite[Theorem 1]{FY04} provides a Gr\"obner basis for the defining ideal of these more general rings.
In our setting, the atomic lattice is the lattice of flats of a matroid, and the building set is the collection of all nonempty flats.
We now note that the Gr\"obner basis in \cite{FY04} carries over to the simplicial presentation as follows.  Again, pick a total ordering $>$ of $\mathscr L_M$ such that if $\rk_M(F) \leq \rk_M(G)$, then $F > G$.

\begin{prop}\label{prop:gbH}
        The following is a Gr\"obner basis for the defining ideal of $\myChow^\bullet(M)$ with respect to the lex monomial ordering induced by $>$:
        \[
\left\{\begin{array}{lc}
(\sum_{F\subseteq G}\mu(F,G)h_G)(\sum_{F'\subseteq G'}\mu(F',G')h_{G'}) & F,F' \textnormal{ incomparable} \\[3mm]
(\sum_{F\subseteq G}\mu(F,G)h_G) \cdot  h_{F'}^{\operatorname{rk} F' - \operatorname{rk} F}  & \textnormal{ $F\subsetneq F'$} \\[3mm]
h_F^{\operatorname{rk} F}  & F\in \mathcal L_M\setminus\{\emptyset\}
\end{array}\right. .
        \]
\end{prop}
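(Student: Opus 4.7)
The plan is to deduce this Gr\"obner basis from \Cref{thm:fy04gb} by transporting it across the linear change of variables $-z_F = \sum_{F\subseteq G}\mu(F,G)h_G$ between the Feichtner--Yuzvinsky and simplicial presentations. The key feature, which I would verify first, is that with the chosen ordering on $\mathscr L_M$ (small ranks first), this change of variables is upper-triangular: one has $-z_F = h_F + \sum_{G\supsetneq F}\mu(F,G)h_G$, and every $G\supsetneq F$ satisfies $\operatorname{rk}_M(G) > \operatorname{rk}_M(F)$, hence $G<F$ in the ordering, so the leading $h$-monomial of $-z_F$ is $h_F$ with coefficient $1$. Likewise, from $\sum_{H\geq G}z_H = -h_G$, the leading $h$-monomial of $\sum_{H\geq G}z_H$ is $h_G$.

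Next, I would check that the proposed list in the proposition is, up to signs, the image of the Feichtner--Yuzvinsky Gr\"obner basis under $z_F \mapsto -\sum_{F\subseteq G}\mu(F,G)h_G$, using the identity $\sum_{H\geq G}z_H = -h_G$ to rewrite the second and third families. Since the $h_F$ were introduced in \Cref{dfn:simplicial} precisely so that this substitution sends $A_{FY}^\bullet(M)$ isomorphically onto $\myChow^\bullet(M)$, this at once shows each proposed element lies in the defining ideal of $\myChow^\bullet(M)$. The triangularity observation, combined with multiplicativity of leading monomials under products over a field, then identifies the leading term of each proposed element as $h_Fh_{F'}$, $h_Fh_{F'}^{\operatorname{rk} F'-\operatorname{rk} F}$, or $h_F^{\operatorname{rk} F}$ in the three respective cases---precisely the leading terms of the Feichtner--Yuzvinsky Gr\"obner basis after the formal relabeling $z_F\leftrightarrow h_F$.

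The final step is a Hilbert-function comparison. Let $I_\nabla\subset\RR[h_F]$ denote the defining ideal of $\myChow^\bullet(M)$, and let $J$ be the monomial ideal generated by the leading terms just identified. Because \Cref{thm:fy04gb} is a Gr\"obner basis, $\RR[h_F]/J$, being identical to $\RR[z_F]/\operatorname{in}(I_{FY})$ after renaming variables, has the same Hilbert function as $A_{FY}^\bullet(M) \cong \myChow^\bullet(M) = \RR[h_F]/I_\nabla$. Combined with the automatic containment $J\subseteq\operatorname{in}(I_\nabla)$, equality of Hilbert functions forces $J=\operatorname{in}(I_\nabla)$, which is the Gr\"obner basis criterion. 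The main (minor) obstacle is bookkeeping: ensuring the three families on the two sides correspond correctly and that the overall sign discrepancies, which are immaterial for the ideal they generate, do not muddle the leading-term computation.
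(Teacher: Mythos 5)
Your proposal is correct and follows essentially the same route as the paper's proof: transport the Feichtner--Yuzvinsky Gr\"obner basis (\Cref{thm:fy04gb}) across the triangular change of variables $z_F \mapsto -\sum_{F\subseteq G}\mu(F,G)h_G$, observing that triangularity (with $\pm 1$ on the diagonal) makes leading monomials correspond under the relabeling $z_F\leftrightarrow h_F$. The paper closes by invoking directly that a triangular substitution preserves Gr\"obner bases, whereas you close with an explicit Hilbert-function comparison; these are interchangeable ways of finishing, and your ``upper'' versus the paper's ``lower'' triangular is merely a choice of convention for indexing rows and columns.
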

\begin{proof}
\newcommand{\ph}{\varphi}
        Let $S_{FY} = \RR[z_F : F \in \mathscr L_M \setminus \{\emptyset \} ]$ and $S_\nabla = \RR[h_F : F \in \mathscr L_M \setminus \{\emptyset \} ]$, and define $\ph: S_{FY} \to S_\nabla$ to be the substitution $z_F \mapsto - \sum_{F \subseteq G} \mu(F, G) h_G$. 
        
        Observe that $\ph$ is lower triangular with $-1$'s on the diagonal when the variables $z_F$ and $h_F$ are written in descending order with respect to $>$.
        Hence, if $f \in S$ with initial monomial $z_{F_1}^{e_1} \cdots z_{F_k}^{e_k}$, then the initial monomial of $\ph(f)$ is $h_{F_1}^{e_1} \cdots h_{F_k}^{e_k}$.
        The proposition now follows from the fact that the elements of the Gr\"obner basis above are the images under $\ph$ of the elements of the Gr\"obner basis given in Theorem \ref{thm:fy04gb}.
\end{proof}
As a result, we obtain a monomial basis of $\myChow^\bullet(M)$.
\begin{cor}\label{cor:monombasis}
For $c \in \ZZ_{\geq 0}$, a monomial $\RR$-basis for the degree $c$ part $\myChow^c(M)$ of the Chow ring $\myChow^\bullet(M)$ of a matroid $M$ is
$$\{h_{F_1}^{a_1} \cdots h_{F_k}^{a_k} \ | \ \sum a_i = c,\ \emptyset = F_0 \subsetneq F_1 \subsetneq \cdots \subsetneq F_k,\ 1\leq a_i < \operatorname{rk}_M(F_i) - \operatorname{rk}_M(F_{i-1})\}.$$
We call this basis of $\myChow^\bullet(M)$ the \textbf{nested basis} of the Chow ring of $M$.
\end{cor}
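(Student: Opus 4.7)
The plan is to invoke the standard Gr\"obner basis principle: since Proposition \ref{prop:gbH} exhibits a Gr\"obner basis for the defining ideal of $\myChow^\bullet(M)$ with respect to the chosen lex order, the standard monomials---those not divisible by the leading monomial of any Gr\"obner basis element---descend to a grading-preserving $\RR$-basis of $\myChow^\bullet(M)$. So the task reduces to identifying these standard monomials and checking that they coincide with the nested monomials in the statement.

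First I would compute the leading monomials of the three families. Because the term order is chosen so that $F > G$ whenever $\rk F \leq \rk G$, in each M\"obius sum $\sum_{F \subseteq G} \mu(F,G) h_G$ the variable $h_F$ itself is the largest and appears with coefficient $\mu(F,F) = 1$. Hence the three families of Proposition \ref{prop:gbH} have leading monomials $h_F h_{F'}$ for incomparable $F, F'$, then $h_F \cdot h_{F'}^{\rk F' - \rk F}$ for $F \subsetneq F'$, and finally $h_F^{\rk F}$.

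Next I translate ``not divisible by any of these leading monomials'' into combinatorial constraints on a monomial $h_{F_1}^{a_1} \cdots h_{F_k}^{a_k}$ with distinct flats $F_i$ and positive exponents $a_i$. Avoiding the first family forces the $F_i$ to be pairwise comparable, so after reindexing $F_1 \subsetneq \cdots \subsetneq F_k$. The third family applied at $F = F_1$ gives $a_1 < \rk F_1$, and for each $i \geq 2$ the second family applied at $F = F_{i-1}$, $F' = F_i$ gives $a_i < \rk F_i - \rk F_{i-1}$, since $h_{F_{i-1}}$ is already present with positive exponent. Setting $F_0 := \emptyset$ with $\rk F_0 = 0$ unifies these into $1 \leq a_i < \rk F_i - \rk F_{i-1}$ for all $i$. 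I would then check that the remaining divisibility conditions---$a_j < \rk F_j - \rk F_i$ for $i < j - 1$ coming from family (2), and $a_i < \rk F_i$ for $i \geq 2$ coming from family (3)---are implied by this tighter bound, using $\rk F_{j-1} \geq \rk F_i$ and $\rk F_{i-1} \geq 0$.

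There is no genuine obstacle; the work is essentially bookkeeping. The only point requiring care is the very first step: confirming that the term order makes $h_F$ the leading variable of each M\"obius sum, and that the minimal exponent bound extracted from families (2) and (3) is indeed $\rk F_i - \rk F_{i-1}$ rather than something weaker. Both are immediate once the order is spelled out.
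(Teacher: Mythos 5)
Your proposal is correct and follows the same route the paper intends: Corollary \ref{cor:monombasis} is stated in the paper as an immediate consequence of Proposition \ref{prop:gbH}, and your argument simply spells out the standard Gr\"obner basis reasoning (identify leading monomials, characterize the standard monomials, check the constraints collapse to $1 \leq a_i < \rk_M F_i - \rk_M F_{i-1}$) that the paper leaves implicit. The bookkeeping is accurate: the chain condition comes from family (1), the bound $a_1 < \rk F_1$ from family (3), and the bounds $a_i < \rk F_i - \rk F_{i-1}$ for $i \geq 2$ from family (2) with $F = F_{i-1}$, $F' = F_i$, which dominate all other divisibility constraints from families (2) and (3).
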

\begin{proof}
If $B$ is a Gr\"obner basis for an ideal $I$ of a polynomial ring $R$ over a field $k$, then the monomials not divisible by the initial term of any element of $B$ are a $k$-basis for $R/I$ \cite[Theorem 39.6]{Pee11}.
Applying this fact to the Gr\"obner basis of \Cref{prop:gbH} yields the result.
\end{proof}

While the nested basis of $\myChow^\bullet(M)$ looks identical to the one given for $A_{FY}^\bullet(M)$ in \cite[Corollary 1]{FY04}, we show here that with the simplicial presentation the monomials in the basis now allow for a combinatorial interpretation as a distinguished set of matroid quotients of $M$.

\medskip
Let $f: M' \twoheadleftarrow M$ be a matroid quotient on a ground set $E$.

\begin{defn}\label{dfn:relnestedquot}
        An \textbf{$f$-cyclic flat} of $f$ is a flat $F \in \mathscr L_{M'}$ such that $F$ is minimal (with respect to inclusion) among the flats $F' \in \mathscr L_{M'}$ such that $n_f(F') = n_f(F)$.  A matroid $M'$ is a \textbf{relative nested quotient} of $M$ if the $f$-cyclic flats of $M'$ form a chain.
\end{defn}

Relative nested quotients are relative generalizations of (loopless) Schubert matroids:

\begin{eg}
        If $M = U_{|E|, E}$ then any matroid $M'$ is a quotient $f: M' \twoheadleftarrow M$.  In this case, we have $n_f(A) = |A| - \operatorname{rk}_{M'}(A)$ for a subset $A\subseteq E$.  So, if further a subset $B\subsetneq A$ satisfies $n_f(B) = n_f(A)$, then $|A| - |B| = \operatorname{rk}_{M'}(A) - \operatorname{rk}_{M'}(B)$, which occurs if and only if $A\setminus B$ is a set of coloops in the restriction $M'|_A$.  In other words, the $f$-cyclic flats of $M'$ are precisely the \textbf{cyclic flats} of $M'$, which are flats $F$ of $M'$ such that $M'|_F$ has no coloops.  Moreover, the relative nested quotients of $U_{|E|, E}$ are called \textbf{nested matroids}, which in the literature also go by (loopless) \textbf{Schubert matroids} because they are realized by general points in the appropriate Schubert subvarieties of Grassmannians.  See \cite[\S2.2]{Ham17} for more on cyclic flats and nested matroids.
\end{eg}

The data of cyclic flats of a matroid and their ranks determine the matroid \cite[Proposition 2.1]{Bry75}.  We generalize the statement to $f$-cyclic flats of a matroid quotient.  We first need the following fact about obtaining any matroid quotient as a sequence of elementary quotients.

\begin{lem}\label{lem:higgs} \ 
\begin{enumerate}
\item \cite{Hig68}, \cite[Exercise 7.20]{Bry86}
Any matroid quotient $f: M'\twoheadleftarrow M$ can be obtained as a sequence of elementary quotients in a canonical way called the \textbf{Higgs factorization} of $f$.  The Higgs factorization of a quotient $f: M' \twoheadleftarrow M$ with $n_f(E) = c$ is a sequence of elementary quotients
$$M' = M_0 \overset{\mathcal K_1}\twoheadleftarrow M_1 \overset{\mathcal K_2}\twoheadleftarrow \cdots \overset{\mathcal K_c}\twoheadleftarrow M_c = M$$
where the bases of $M_i$ for $i = 1, \ldots, c$ are defined as
\[\mathcal B(M_i) = \{A\subseteq E \ | \ \textnormal{$A$ spanning in $M'$, independent in $M$, and } |A| = \operatorname{rk}(M') + i\}.
\]
\item \cite[Theorem 3.4]{KK78}
The modular cuts $\mathcal K_i$ of the Higgs factorization are
\[\mathcal K_i = \{G\in \mathscr L_{M_i} \ | \ n_f(G) \geq i\}.
\]
\end{enumerate}
\end{lem}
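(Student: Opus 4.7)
The plan is to define each intermediate matroid $M_i$ directly via an explicit rank function and then verify both parts of the lemma from that construction. Set
\[
        \operatorname{rk}_{M_i}(A) := \min\bigl(\operatorname{rk}_M(A),\ \operatorname{rk}_{M'}(A) + i\bigr), \qquad A \subseteq E.
\]
Nonnegativity, normalization, and unit-increase are immediate from the corresponding properties of $\operatorname{rk}_M$ and $\operatorname{rk}_{M'}$, but submodularity is subtle because the minimum of two submodular functions is generally not submodular. This is the step where the quotient hypothesis is essential: $f: M'\twoheadleftarrow M$ forces $n_f = \operatorname{rk}_M - \operatorname{rk}_{M'}$ to be monotone in $A$, and in the four cases determined by which argument of the $\min$ achieves the value at $A$ and at $B$, the required inequality reduces to submodularity of $\operatorname{rk}_M$, submodularity of $\operatorname{rk}_{M'}$, or the quotient inequality $\operatorname{rk}_M(B) - \operatorname{rk}_M(A) \geq \operatorname{rk}_{M'}(B) - \operatorname{rk}_{M'}(A)$ applied to the inclusions $A\cap B \subseteq A$ and $B \subseteq A\cup B$. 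I expect this case analysis to be the main obstacle; everything afterward is largely mechanical.

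With $M_i$ established as a matroid, I would identify its bases and check the endpoints. Monotonicity of $n_f$ yields $\operatorname{rk}_{M_i}(E) = \operatorname{rk}(M') + i$ for $0 \leq i \leq c$, and a set $A$ of this cardinality is a basis of $M_i$ precisely when $A$ is independent in $M$ (forcing $|A| \leq \operatorname{rk}_M(A)$) and $\operatorname{rk}_{M'}(A) \geq \operatorname{rk}(M')$, i.e., $A$ is spanning in $M'$, matching the formula in the lemma. At the endpoints, $i = 0$ gives $\operatorname{rk}_{M_0} = \operatorname{rk}_{M'}$ since $\operatorname{rk}_{M'} \leq \operatorname{rk}_M$ pointwise by the quotient condition, and $i = c$ gives $\operatorname{rk}_{M_c} = \operatorname{rk}_M$ since $n_f$ is bounded above by $n_f(E) = c$.

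To finish part (1), I would verify that each step $M_{i-1} \twoheadleftarrow M_i$ is an elementary quotient. From the rank formula, the difference $\operatorname{rk}_{M_i}(A) - \operatorname{rk}_{M_{i-1}}(A)$ equals $1$ if $n_f(A) \geq i$ and $0$ otherwise; monotonicity of $n_f$ makes this difference monotone in $A$, giving the quotient relation, and evaluating at $A = E$ shows the total rank drop is exactly $1$. For part (2), the characterization recalled in \S\ref{subsection:matquot} identifies the modular cut $\mathcal K_i$ as the set of flats of $M_i$ whose rank drops by $1$ in $M_{i-1}$, which by the above is exactly $\{G \in \mathscr L_{M_i} : n_f(G) \geq i\}$. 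A short case check using unit-increase of $\operatorname{rk}_{M_i}$ together with $G$ being a flat of $M_i$ confirms that any such $G$ is automatically a flat of $M_{i-1}$ as well, so this set is a well-defined modular cut; canonicity of the factorization then follows from the uniqueness of the rank function defining each $M_i$.
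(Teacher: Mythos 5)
The paper itself does not prove this lemma; it simply cites Higgs and Brylawski for part (1) and Kennedy--Kung for part (2). Your proof is therefore a genuinely different route in the sense that it replaces a citation with a complete, self-contained argument. The approach you take---defining $M_i$ by the rank function $\operatorname{rk}_{M_i} = \min(\operatorname{rk}_M, \operatorname{rk}_{M'} + i)$ and deriving everything from monotonicity of $n_f$---is the standard ``Higgs lift'' construction, and it is correct. The one spot that deserves slightly more care than your sketch indicates is the mixed case of submodularity: when $\operatorname{rk}_{M_i}(A)$ is achieved by $\operatorname{rk}_M$ and $\operatorname{rk}_{M_i}(B)$ by $\operatorname{rk}_{M'}+i$, the reduction is not to a single inequality but to a chain of two, for instance $\operatorname{rk}_M(A) - \operatorname{rk}_M(A\cap B) \geq \operatorname{rk}_{M'}(A) - \operatorname{rk}_{M'}(A\cap B) \geq \operatorname{rk}_{M'}(A\cup B) - \operatorname{rk}_{M'}(B)$, using the quotient inequality on $A\cap B \subseteq A$ followed by submodularity of $\operatorname{rk}_{M'}$ (or symmetrically, submodularity of $\operatorname{rk}_M$ followed by the quotient inequality on $B \subseteq A\cup B$). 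You correctly flag both inclusions, but phrasing it as a reduction ``to'' one of the three inequalities understates that two of them must be chained. Your identification of the bases, endpoint checks at $i=0$ and $i=c$ via monotonicity and the bound $n_f \leq c$, the computation $\operatorname{rk}_{M_i}(A) - \operatorname{rk}_{M_{i-1}}(A) = \mathbbm 1[n_f(A) \geq i]$, and the verification that flats $G \in \mathscr L_{M_i}$ with $n_f(G) \geq i$ remain flats of $M_{i-1}$ (so that the set $\mathcal K_i$ agrees with the paper's convention for the modular cut extracted from an elementary quotient) are all sound.
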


\begin{prop}\label{prop:cyclic}
The data of the $f$-cyclic flats, their $f$-nullities, and the matroid $M$ determine the quotient $f: M' \twoheadleftarrow M$.  More precisely, writing $n_f(E) = c$, the data recovers the Higgs factorization $
M' = M_0 \overset{\mathcal K_1}\twoheadleftarrow M_1 \overset{\mathcal K_2}\twoheadleftarrow \cdots \overset{\mathcal K_c}\twoheadleftarrow M_c = M 
$ of $f$ by specifying the modular cuts $\mathcal K_i$ to be
\[
\mathcal K_i = \{G\in \mathscr L_{M_i} \mid G \supseteq F \textnormal{ for some } F\in \operatorname{cyc}(f) \textnormal{ with } n_f(F) \geq i\}
\]
for each $i = 1, \ldots, c$.
\end{prop}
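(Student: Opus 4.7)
The plan is to show that the two expressions for $\mathcal K_i$ agree, namely
\[
\{G \in \mathscr L_{M_i} : n_f(G) \geq i\} \;=\; \{G \in \mathscr L_{M_i} : G \supseteq F \text{ for some } F \in \operatorname{cyc}(f) \text{ with } n_f(F) \geq i\}.
\]
Granted this equality and Lemma \ref{lem:higgs}(2), the Higgs factorization is recovered from the input data by downward induction: starting with $M_c = M$, read $\mathcal K_c$ off the right-hand formula (which uses only $M$, $\operatorname{cyc}(f)$, and $n_f$ on $\operatorname{cyc}(f)$), define $M_{c-1}$ as the elementary quotient of $M_c$ by $\mathcal K_c$, and iterate down to $M_0 = M'$. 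The value $c = n_f(E) = \max_{F \in \operatorname{cyc}(f)} n_f(F)$ is itself visible from the given data, so the induction can be started.

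For the $\supseteq$ inclusion, I would invoke monotonicity of $n_f$, which is a direct consequence of the defining inequality $\operatorname{rk}_M(B) - \operatorname{rk}_M(A) \geq \operatorname{rk}_{M'}(B) - \operatorname{rk}_{M'}(A)$ for a matroid quotient and $A \subseteq B$. Hence $G \supseteq F$ with $n_f(F) \geq i$ forces $n_f(G) \geq n_f(F) \geq i$.

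For the $\subseteq$ inclusion, the crux is the observation that any $G \in \mathscr L_{M_i}$ with $n_f(G) \geq i$ is automatically a flat of $M'$. This is proved by iterating two facts: by Lemma \ref{lem:higgs}(2), $\mathcal K_j = \{H \in \mathscr L_{M_j} : n_f(H) \geq j\}$; and by the very definition of a modular cut and the elementary quotient it defines, $\mathcal K_j \subseteq \mathscr L_{M_{j-1}}$. Thus $G \in \mathcal K_i \subseteq \mathscr L_{M_{i-1}}$, and since $n_f(G) \geq i > i-1$, also $G \in \mathcal K_{i-1}$; inductively $G \in \mathcal K_j \subseteq \mathscr L_{M_{j-1}}$ for all $1 \leq j \leq i$, yielding $G \in \mathscr L_{M_0} = \mathscr L_{M'}$. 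One then takes $F$ to be a minimal flat of $M'$ contained in $G$ with $n_f(F) \geq i$, which exists because $G$ itself qualifies. Finally, $F$ is $f$-cyclic: any strictly smaller flat $F' \subsetneq F$ of $M'$ with $n_f(F') = n_f(F)$ would still lie in $G$ with $n_f(F') \geq i$, contradicting the minimality of $F$.

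The main obstacle is the iterative descent showing $G \in \mathscr L_{M'}$; once this is in hand, producing the $f$-cyclic witness is a brief minimality argument, and the induction recovering the Higgs factorization is then bookkeeping built on Lemma \ref{lem:higgs}.
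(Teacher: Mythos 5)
Your proof is correct and follows the same route as the paper: read $\mathcal K_i = \{G \in \mathscr L_{M_i} : n_f(G) \geq i\}$ off Lemma~\ref{lem:higgs}.(2), then rewrite this via $f$-cyclic flats. The paper compresses your entire second half—the monotonicity of $n_f$, the descent $G \in \mathcal K_i \subseteq \mathscr L_{M_{i-1}} \subseteq \cdots \subseteq \mathscr L_{M'}$, and the minimality argument producing an $f$-cyclic witness—into the phrase ``by the definition of $f$-cyclic flats,'' so you have simply made explicit what the paper leaves implicit.
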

\begin{proof}
For each $i = 1, \ldots, c$, the modular cut $\mathcal K_i$ is $\{G\in \mathscr L_{M_i} \ | \ n_f(G) \geq i\}$ by \Cref{lem:higgs}.(2).   This can equivalently be written as $\{G\in \mathscr L_{M_i} \ | \ G\supseteq F \ \textnormal{ for some } F\in \operatorname{cyc}(f) \textnormal{ with } n_f(F)\geq i\}$ by the definition of $f$-cyclic flats.
\end{proof}

We now show that the nested basis of $\myChow^\bullet(M)$ given in Corollary \ref{cor:monombasis} is in bijection with the set of relative nested quotients of $M$.

\begin{thm}\label{thm:monomnested}
Let $M$ be a loopless matroid of rank $r=d+1$.  For each $0\leq c \leq d$, the cap product map
$$\myChow^c(M) \to \operatorname{MW}_{d-c}(\Sigma_M), \quad \xi\mapsto \xi \cap \Delta_M$$
induces a bijection between the monomial basis for $\myChow^c(M)$ given in Corollary \ref{cor:monombasis} and the set of Bergman classes $\Delta_{M'}$ of loopless relative nested quotients $M'\twoheadleftarrow M$ with $\operatorname{rk}(M') = \operatorname{rk}(M) - c$.
\end{thm}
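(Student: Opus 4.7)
The plan is to compute the cap product of each nested-basis monomial via Theorem \ref{thm:hyper}, recognize the result as the Bergman class of an iterated principal truncation of $M$, and identify this iterated truncation as a loopless relative nested quotient whose cyclic flats are dictated by the monomial. Proposition \ref{prop:cyclic} will then supply the inverse of the correspondence. The main obstacle I anticipate is the rank-tracking step: establishing the clean formula $n_f(G) = a_1 + \cdots + a_{i(G)}$ for every flat $G$ of the resulting quotient $M'$, which is what allows identification of the $f$-cyclic flats with $F_1, \ldots, F_k$.

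Given a monomial $h_{F_1}^{a_1} \cdots h_{F_k}^{a_k}$ in the nested basis, with $F_0 := \emptyset \subsetneq F_1 \subsetneq \cdots \subsetneq F_k$ and $1 \leq a_i < \operatorname{rk}_M(F_i) - \operatorname{rk}_M(F_{i-1})$, I would apply Theorem \ref{thm:hyper} iteratively, processing the $F_i$'s from smallest to largest. At each stage, the flat to be truncated remains a flat of the current intermediate matroid, because $F_j \supseteq F_\ell$ for $\ell < j$ places $F_j$ in the modular cut $[F_\ell, E]$ used at every earlier step. Writing $M' := T_{F_k}^{a_k} \circ \cdots \circ T_{F_1}^{a_1}(M)$, iteration gives $h_{F_1}^{a_1} \cdots h_{F_k}^{a_k} \cap \Delta_M = \Delta_{M'}$.

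Next, I would show $M'$ is a loopless relative nested quotient whose nonempty $f$-cyclic flats are exactly $F_1 \subsetneq \cdots \subsetneq F_k$. For any flat $G$ of $M'$, the fact that $M'$ is a quotient of each intermediate matroid $M_j$ implies $G$ is also a flat of $M_j$, so $\operatorname{cl}_{M_j}(G) = G$, and each truncation $T_{F_\ell}$ reduces the rank of $G$ by one precisely when $F_\ell \subseteq G$. Letting $i(G)$ denote the largest index $\ell$ with $F_\ell \subseteq G$ (and $0$ if none), this yields $n_f(G) = a_1 + \cdots + a_{i(G)}$. Hence the nonzero nullity values attained on flats of $M'$ are exactly $a_1, a_1 + a_2, \ldots, a_1 + \cdots + a_k$, each minimally realized at the corresponding $F_i$, so $F_1 \subsetneq \cdots \subsetneq F_k$ are the nonempty $f$-cyclic flats. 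The inequalities $a_i < \operatorname{rk}_M(F_i) - \operatorname{rk}_M(F_{i-1})$ translate to $\operatorname{rk}_{M'}(F_i) > \operatorname{rk}_{M'}(F_{i-1})$; at $i = 1$ this forces elements of $F_1$ to have rank at least one in $M'$, while elements outside $F_k$ are unaffected by any truncation, so $M'$ is loopless.

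For the converse direction, given a loopless relative nested quotient $f : M' \twoheadleftarrow M$ with nonempty $f$-cyclic flats $F_1 \subsetneq \cdots \subsetneq F_k$ and nullities $0 < n_1 < \cdots < n_k$, I would set $a_i := n_i - n_{i-1}$ with $n_0 := 0$. The nested-basis inequalities on the $a_i$ follow from the looplessness of $M'$ and the distinctness of consecutive $F_i$'s as flats of $M'$, both giving $\operatorname{rk}_{M'}(F_i) > \operatorname{rk}_{M'}(F_{i-1})$. By Proposition \ref{prop:cyclic}, the quotient $f$ is recovered uniquely from its $f$-cyclic flats and their nullities, so the monomial $h_{F_1}^{a_1} \cdots h_{F_k}^{a_k}$ caps $\Delta_M$ to $\Delta_{M'}$ by the computation above. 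The two constructions are manifestly inverse to each other, establishing the claimed bijection.
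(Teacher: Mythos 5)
Your proof is correct and follows the same strategy as the paper: iterate Theorem~\ref{thm:hyper} to realize the nested-basis monomial as an iterated principal truncation of $M$, identify the $f$-cyclic flats via the nullity formula $n_f(G) = a_1 + \cdots + a_{i(G)}$, and invoke Proposition~\ref{prop:cyclic} for the reverse direction; the paper truncates from the largest flat $F_k$ down to $F_1$ while you go smallest-first, but matroid intersection is commutative so both orders yield the same matroid. One caveat: your looplessness justification is imprecise, since $\operatorname{rk}_{M'}(F_1)\geq 1$ does not preclude loops \emph{inside} $F_1$, and ``elements outside $F_k$ are unaffected'' implicitly requires that no $F_i$ drops to rank $1$ at any intermediate step; the clean argument is that summing the nested-basis inequalities gives $a_1 + \cdots + a_i < \operatorname{rk}_M(F_i)$, which guarantees that every principal truncation in your sequence is by a flat of rank at least $2$ in the current intermediate matroid, so by Lemma~\ref{lem:multquot} each intermediate matroid, and hence $M'$, is loopless.
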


Let us prepare with a lemma.  For a flat $F$ of a matroid $M$, and an integer $a \geq 0$, denote by ${T_F}^a(M)$ the matroid obtained from $M$ by performing  $a$ many times the principal truncation associated to $F$.  Such iteration of $T_F$ is well-defined because principal truncation associated to $F$ preserves $F$ as a flat.

\begin{lem}\label{lem:monomnested}
Let $f: M' \twoheadleftarrow M$ be a matroid quotient formed by two loopless matroids, and let $F$ be a nonempty flat of $M'$.  Then, the following hold:
\begin{enumerate}[label = (\arabic*)]
\item \label{monomnested:flat} Suppose $G$ is another flat of $M'$ such that $G\subsetneq F$, and $a < \operatorname{rk}_{M'}(F) - \operatorname{rk}_{M'}(G)$.  Then, the subset $G$ is a flat of ${T_F}^a(M')$.  In particular, setting $G = \emptyset$, we have that ${T_F}^a(M')$ is loopless if $a < \operatorname{rk}_{M'}(F)$.
\item \label{monomnested:cyc} Suppose $F\subseteq S$ for every nonempty $f$-cyclic flat $S$ of $M'$.  Denote by $g$ the matroid quotient $g: {T_F}(M') \twoheadleftarrow M$.  Then, the set $\operatorname{cyc}(g)$ of $g$-cyclic flats equals $\{F\} \cup \operatorname{cyc}(f)$.
\end{enumerate}
\end{lem}

\begin{proof}
For statement (1), we first recall the description of the flats in a principal truncation (\Cref{prop:quotbases}):  A flat $G'$ of $M'$ satisfying $G'\subsetneq F$ remains a flat in $T_F(M')$ if $F$ does not cover $G'$ in $M'$.    Moreover, the rank of $F$ decreases by 1 each time one performs the principal truncation $T_F$.  Hence, the condition $a < \operatorname{rk}_{M'}(F) - \operatorname{rk}_{M'}(G)$ ensures that $G$ is not covered by $F$ in the matroid ${T_F}^{a-1}(M')$, and so $G$ is a flat of ${T_F}^a(M')$.  For statement (2), again by \Cref{prop:quotbases}, we have that the set $[F,E] = \{F' \mid F'\in \mathscr L_{M'} \text{ and } F'\supseteq F\}$ is exactly the set of flats of ${T_F(M')}$ satisfying $\operatorname{rk}_{M'}(F) - \operatorname{rk}_{T_F(M')}(F') = 1$, and the other flats of $T_F(M')$ do not change in rank when considered as flats of $M'$.  Thus, since $F$ is contained in every nonempty $f$-cyclic flat of $M'$, the subset $F$ is a $g$-cyclic flat with $n_g(F) = n_f(F)+1$, and if $S$ is a nonempty $f$-cyclic flat with $n_f(S) = k$, then $S$ is a $g$-cyclic flat with $n_g(S) = k+1$.
\end{proof}

\begin{proof}[Proof of \Cref{thm:monomnested}]
Let $h_{F_1}^{a_1} \cdots h_{F_k}^{a_k}$ be an element of the monomial basis given in Corollary \ref{cor:monombasis}.
By repeated application of Theorem \ref{thm:hyper} combined with \Cref{lem:monomnested}.\ref{monomnested:flat}, we have
\[
        h_{F_1}^{a_1} \cdots h_{F_k}^{a_k} \cap \Delta_M = \Delta_{M'}
\]
where
$M'$ is the loopless matroid obtained from $M$ by a sequence of principal truncations, first by $F_k$ repeated $a_k$ times, then by $F_{k-1}$ repeated $a_{k-1}$ times, and so forth.
Moreover, \Cref{lem:monomnested}.\ref{monomnested:cyc} implies that $f: M' \twoheadleftarrow M$ is a matroid quotient with $\operatorname{cyc}(f) =\{ \emptyset, F_1, \ldots, F_k\}$ and $n_f(F_j) = \sum_{i = 1}^j a_i$.   We have thus shown that an element of the nested basis defines a loopless relative nested quotient by the cap product.

Conversely, let $f: M' \twoheadleftarrow M$ be a loopless relative nested quotient with $\operatorname{cyc}(f) =  \{\emptyset\subsetneq F_1 \subsetneq \cdots \subsetneq F_k\}$.  Define integers $a_1, \ldots, a_k$ by $n_f(F_j) = \sum_{i = 1}^j a_i$ for all $j = 1, \ldots, k$.  \Cref{prop:cyclic} implies that the $f$-cyclic flats and their $f$-nullities of a relative nested quotient $f: M'\twoheadleftarrow M$ recovers the Higgs factorization
\[
M' = M_0 \overset{\mathcal K_1}\twoheadleftarrow M_1 \overset{\mathcal K_2}\twoheadleftarrow \cdots \overset{\mathcal K_c}\twoheadleftarrow M_c = M
\]
of $f$ by specifying the modular cuts to be
\[
\mathcal K_i = \{G \in \mathscr L_{M_i} \mid G \supseteq F \textnormal{ for some } F\in \operatorname{cyc}(f) \textnormal{ with } n_f(F) \geq i\}.
\]
Thus, since by assumption the $f$-cyclic flats form a chain, the descriptions of the modular cuts $\mathcal K_i$ imply that the matroid $M'$ is obtained by a sequence of principal truncations, first by $F_k$ repeated $a_k$ times, then by $F_{k-1}$ repeated $a_{k-1}$ times, and so forth,
 where $a_j = n_f(F_j) - n_f(F_{j-1})$ for $j > 1$, and $a_1 = n_f(F_1)$.
\end{proof}

Moreover, the bijection given in the previous theorem respects linear independence.

\begin{prop}\label{prop:linindep}
The elements  
\[
\{\Delta_{M'} \ : \ M' \text{ is a loopless relative nested quotient of $M$}\}
\]
are linearly independent in $\operatorname{MW}_\bullet(\Sigma_{A_n})$.
\end{prop}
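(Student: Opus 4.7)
The plan is to prove linear independence by a triangularity argument with respect to a signature-cone construction organized by the chains of cyclic flats.

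First, since $\Delta_{M'} \in \operatorname{MW}_{\operatorname{rk}(M')-1}(\Sigma_{A_n})$, Bergman classes of differing dimensions lie in different Minkowski weight groups and are automatically independent, so I may fix a codimension $c$ and consider only the relative nested quotients $M'_1,\ldots,M'_N$ of $M$ with $\operatorname{rk}(M'_i)=r-c$. By \Cref{prop:cyclic}, each $M'_i$ is determined by its chain of $f_i$-cyclic flats $F_1^{(i)}\subsetneq\cdots\subsetneq F_{k_i}^{(i)}$ together with their nullities $n_l^{(i)}$. Fixing a total order on $E$, I assign to each $M'_i$ a \emph{signature} top-dimensional cone $\sigma_i$ of $\Sigma_{M'_i}$, defined as the lex-smallest maximal chain of flats of $M'_i$ refining its cyclic chain (the truncation case, where the nontrivial cyclic chain is empty or only contains $E$, being handled by picking the lex-smallest maximal chain outright). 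By construction $\Delta_{M'_i}(\sigma_i)=1$.

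I then equip the quotients $M'_1,\ldots,M'_N$ with a total order $\preceq$ obtained by lex-comparing their cyclic-chain data, where flats are ordered first by their rank in $M$ and then by subset lex, with ties broken via nullities. The technical heart of the argument is the \textbf{triangularity lemma}: if $\sigma_j$ happens to be a maximal chain of flats of $M'_i$, then $M'_i\preceq M'_j$. The reason is that $\sigma_j$ being simultaneously a maximal flag of $M'_j$ and of $M'_i$ forces each flat $H\in\sigma_j$ to occupy the same rank in both matroids, hence $n_{f_i}(H)=n_{f_j}(H)$; in particular every cyclic flat $F_l^{(j)}$ of $M'_j$ appears as a flat of $M'_i$ of matching nullity, so the $f_i$-cyclic flat at that nullity (the minimum such in $M'_i$) is contained in $F_l^{(j)}$. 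Together with a separate argument for nullities in $M'_i$ not achieved cyclically in $M'_j$, this yields the lex comparison on cyclic chains, with equality if and only if $M'_i=M'_j$ by \Cref{prop:cyclic}.

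With the triangularity lemma in hand, the matrix $\bigl(\Delta_{M'_i}(\sigma_j)\bigr)_{i,j}$ is upper triangular with $1$'s on the diagonal under $\preceq$, and hence is invertible. Via the Kronecker duality pairing of \Cref{lem:kronecker}, invertibility of the evaluation matrix implies the $\Delta_{M'_i}$ are linearly independent in $\operatorname{MW}_{d-c}(\Sigma_{A_n})$, completing the proof.

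The main obstacle will be verifying the triangularity lemma uniformly, particularly the degenerate case in which $M'_j$ is a uniform truncation of $M$, so that $\sigma_j$ is just a canonical max flag of the truncation and contains no \emph{nontrivial} cyclic flat of $M'_j$ that one can pivot on directly. The remedy is to observe that any other codim-$c$ quotient $M'_i$ with a non-empty proper cyclic chain necessarily has its first cyclic flat of $M$-rank strictly less than $r$, so its cyclic chain lex-precedes the singleton chain $(E)$ of the truncation; this ensures the triangularity inequality without appeal to the cyclic structure of $M'_j$ itself.
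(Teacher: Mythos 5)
Your plan is fundamentally the same triangularity argument that the paper runs: compare relative nested quotients by a lex order on their chains of cyclic flats, and show that any $\Delta_{M'_i}$ evaluating to $1$ on a cone built from the cyclic chain of $M'_j$ must have $M'_i$ earlier in the order. The paper phrases this as a lexicographic induction on the vector $\gamma(f)$ of successive rank jumps rather than as an explicit evaluation matrix, but the mechanism is identical. The key observation you make is exactly right: $\sigma_j$ being a maximal flag in both $M'_i$ and $M'_j$ forces each flat in $\sigma_j$ to have the same rank, hence the same nullity, in both quotients, so each $F_l^{(j)}$ is a flat of $M'_i$ of matching nullity and therefore contains an $f_i$-cyclic flat of that nullity.

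The genuine gap is the deferred ``separate argument for nullities in $M'_i$ not achieved cyclically in $M'_j$.'' This is precisely where triangularity could fail, since the two chains of cyclic flats may have different lengths with interleaving nullities, and your order compares flats rather than nullities. To close it one should show by induction on $l$ that $F_l^{(i)} \subseteq F_l^{(j)}$ for every $l \le k_j$: your containment argument shows the set of $f_j$-cyclic nullities is contained in the set of $f_i$-cyclic nullities, so $n_l^{(i)} \le n_l^{(j)}$, and the $f_i$-cyclic flat $G$ at nullity $n_l^{(j)}$ satisfies $F_l^{(i)} \subseteq G \subseteq F_l^{(j)}$; if $k_i > k_j$ or any such inclusion is strict, the lex comparison is strict, and if all are equalities with $k_i = k_j$ then $M'_i = M'_j$ by \Cref{prop:cyclic}. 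You should also verify that two quotients with the same chain of cyclic flats but different nullity assignments cannot share a signature cone --- this holds because the position of a cyclic flat within the flag determines its nullity, so $\sigma_j$ cannot be a maximal flag of $M'_i$ in that case. With these pieces supplied, the proof is complete and equivalent to the one in the paper.
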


The proof given below is a modification of the one given for nested matroids in \cite[Proposition 3.2]{Ham17}.

\begin{proof}
\newcommand{\M}{\mathscr M} 
Fix a matroid $M$, and let 
\[ \M = \{ \Delta_{M'} : M' \text{ is a rank $r$ loopless relative nested quotient of } M \}. \]
Write $\emptyset = F_0 \subsetneq F_1 \subsetneq \cdots \subsetneq F_k$ for the cyclic flats of the quotient $M' \twoheadleftarrow M$, and define $\gamma: \M \to \mathbb N^r$ by 
\[
    \gamma(\Delta_{M'})_i := \begin{cases} \rk_{M'}(F_i) - \rk_{M'}(F_{i-1}) & \text{if } 1 \leq i \leq k \\
        0 & \text{otherwise.}
    \end{cases}
\]
We show that there are no relations among the elements of $\M$ using a lexicographic induction.
The following claim will be used repeatedly.
\begin{claim}
Let $\Delta_{M'} \in \M$, let $\mathcal F = \{\emptyset = F_0 \subsetneq F_1 \subsetneq \cdots \subsetneq F_k\}$ be the cyclic flats of $f: M' \twoheadleftarrow M$, and let $\mathcal F'$ be a maximal chain of flats of $M'$ containing $\mathcal F$. If $\Delta_{N'} \in \M$ for a quotient $g: N'\twoheadleftarrow M$ has the property that $\mathcal F'$ is also a maximal chain of flats of $N'$, then either $M' = N'$ or $\gamma(\Delta_{N'}) <_\text{lex} \gamma(\Delta_{M'})$.
\end{claim}

\begin{claimproof}
Suppose $M'\neq N'$.  We have two cases:
\begin{enumerate}[label = (\roman*)]
\item The set of $g$-cyclic flats of $N'$ strictly contains $\mathcal F$, or
\item there exists a minimal $0 < j \leq k$ such that $F_j$ is not $g$-cyclic in $N'$.
\end{enumerate}
In the case (i), suppose $F$ is a $g$-cyclic flat of $N'$ not in $\mathcal F$.  Then, because the corank $\operatorname{rk}(M) - \operatorname{rk}(N')$ equals the sum of the consecutive differences of $g$-nullities of the $g$-cyclic flats (by the proof of Theorem 3.3.8), we observe that $F\subsetneq F_k$ necessarily for $\operatorname{rk}(M') = \operatorname{rk}(N')$ to hold.  Thus, we have $\gamma(\Delta_{N'}) <_\text{lex} \gamma(\Delta_{M'})$ in this case.
In the case (ii),
there is a cyclic flat $G$ of $N'$ with $F_{j-1} \subset G \subsetneq F_j$ and $n_g(G) = n_g(F_j)$.
    Moreover, $G \neq F_{j-1}$ because
    \[
        n_g(G) = n_g(F_j) = n_f(F_j) > n_f(F_{j-1}) = n_g(F_{j-1}).
    \]
    Consequently, we have $\gamma(\Delta_{N'})_j < \gamma(\Delta_{M'})_j$, so $\gamma(\Delta_{N'}) <_\text{lex} \gamma(\Delta_{M'})$ by our choice of $j$.
\end{claimproof}
Now we prove the proposition. Suppose that $\sum_{\Delta_{M'} \in \M} c_{M'} \Delta_{M'} = 0$.
We show $c_{M'} =0$ for all $M'$.

If $\Delta_{M'} \in \M$ with $\gamma(\Delta_{M'})$ lex-minimal in $\gamma(\M)$, then the claim implies that there is a cone of $\Sigma_{A_n}$ on which $\Delta_{M'}$ takes value 1 and all other elements of $\M$ take value 0; therefore, $c_{M'} = 0$. 
Otherwise, $\gamma(\Delta_{M'})$ is not lex-minimal. Suppose by induction that $c_{N'} = 0$ for all $\Delta_{N'} \in \M$ with $\gamma(\Delta_{N'}) <_\text{lex} \gamma(\Delta_{M'})$. By the claim, there is a cone of $\Sigma_{A_n}$ on which $\Delta_{M'}$ takes value 1 and all elements $\Delta_{N''}$ of $\M$ such that $\gamma(\Delta_{N''}) \not<_\text{lex} \gamma(\Delta_{M'})$ take value 0; therefore, $c_{M'} = 0$, as desired.
\end{proof}

\section{The Poincar\'e duality property}\label{section:poincare}

As a first application of the simplicial presentation, we establish the Poincar\'e duality property for Chow rings of matroids in this section.  While this was established in \cite[Theorem 6.19]{AHK18} by a double induction, our proof is not inductive.

\smallskip
We review some facts about Poincar\'e duality algebras in \S\ref{subsection:poincarealgebra}, but we will only need a small portion of these facts in this section---the rest will be needed later in \Cref{section:HIT}.  We prove the Poincar\'e duality property of Chow rings of matroids and discuss some consequences in \S\ref{subsection:poincare}

\subsection{Poincar\'e duality algebras}\label{subsection:poincarealgebra} We review some general algebraic notions about Poincar\'e duality algebras.  Let $\mathbbm k$ be a field.

\begin{defn}\label{dfn:pda}
A graded finite (commutative) $\mathbbm k$-algebra $A^\bullet = \bigoplus_{i=0}^d A^i$ is a \textbf{(graded) Poincar\'e duality algebra of dimension $d$} if (i) $A^0 = \mathbbm k$,  and (ii) there exists an isomorphism $\int: A^d \overset{\sim}\to \mathbbm k$, called the \textbf{degree map} of $A^\bullet$, such that the map
\[
A^k \to \operatorname{Hom}(A^{d-k}, \kk), \quad \xi \mapsto (\zeta \mapsto \int \xi\cdot \zeta)
\]
is an isomorphism for all $0\leq k \leq d$, or equivalently, the pairing
$$A^i \times A^{d-i} \to A^d \simeq \mathbbm k,\quad (\xi,\zeta) \mapsto \int \xi\cdot\zeta$$
is a non-degenerate for all $0\leq i \leq d$. \label{it:nondegenerate}
\end{defn}

We write $(A^\bullet, \int)$ for a Poincar\'e duality algebra with a chosen degree map $\int$.  In \Cref{section:HIT}, we will often drop the degree symbol $\int$ when the context is clear.  In particular, for $\zeta\in A^1$ we will often write $\zeta^d$ to mean $\int \zeta^d$.
Two useful facts about Poincar\'e duality algebras follow.  Both are straightforward to check.

\begin{prop}\label{prop:prodpoincare} Let $(A^\bullet, \int_A)$ and $(B^\bullet, \int_B)$ be Poincar\'e duality algebras of dimension $d_A$ and $d_B$ over a common field $\mathbbm k$.
\begin{enumerate}
\item The tensor product
$(A \otimes B)^\bullet = \bigoplus_{\bullet} \big(\bigoplus_{i+j = \bullet} A^i \otimes B^j \big)$ is also a Poincar\'e duality algebra of dimension $d_A + d_B$ with degree map
\[
\int_{A\otimes B}: (A\otimes B)^{d_A +d_B} = A^{d_A} \otimes B^{d_B} \to \mathbbm k, \quad a\otimes b \mapsto \int_A a \cdot \int_B b.
\]
\item A surjection $A^\bullet \twoheadrightarrow B^\bullet$ of Poincar\'e duality algebras of the same dimension is an isomorphism.
\end{enumerate}
\end{prop}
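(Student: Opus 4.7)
\medskip

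\noindent\textbf{Proof proposal for \Cref{prop:prodpoincare}.}

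For part (1), the plan is to verify the axioms of \Cref{dfn:pda} directly, reducing the non-degeneracy of the pairing on $A\otimes B$ to a Künneth-style block decomposition. First, since $A^0 = B^0 = \mathbbm k$ one immediately has $(A\otimes B)^0 = \mathbbm k$, and since $A^{d_A}$ and $B^{d_B}$ are each one-dimensional (because the respective degree maps are isomorphisms), the composition $(A\otimes B)^{d_A+d_B} = A^{d_A}\otimes B^{d_B} \xrightarrow{\int_A\otimes\int_B} \mathbbm k$ is also an isomorphism. The main step is non-degeneracy: fix $k$ and decompose
\[
(A\otimes B)^k = \bigoplus_{i+j=k} A^i\otimes B^j, \qquad (A\otimes B)^{d_A+d_B-k} = \bigoplus_{i'+j'=d_A+d_B-k} A^{i'}\otimes B^{j'}.
\]
Because multiplication lands in $A^{d_A}\otimes B^{d_B}$, the pairing sends $(A^i\otimes B^j)\times (A^{i'}\otimes B^{j'})$ to zero unless $i'=d_A-i$ and $j'=d_B-j$, so the pairing is block-diagonal with blocks
\[
(A^i\otimes B^{k-i}) \times (A^{d_A-i}\otimes B^{d_B-k+i}) \to \mathbbm k, \quad (a\otimes b, a'\otimes b') \mapsto \int_A aa'\cdot \int_B bb'.
\]
Each such block is the tensor product of the non-degenerate pairings $A^i\times A^{d_A-i}\to\mathbbm k$ and $B^{k-i}\times B^{d_B-k+i}\to\mathbbm k$, hence is non-degenerate, so the whole pairing is non-degenerate.

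For part (2), let $\phi: A^\bullet \twoheadrightarrow B^\bullet$ be a surjection of Poincar\'e duality algebras of common dimension $d$. The plan is to first observe that the top degree components $A^d$ and $B^d$ are both one-dimensional, since the degree maps $\int_A$ and $\int_B$ are isomorphisms onto $\mathbbm k$; consequently the restriction $\phi^d:A^d\twoheadrightarrow B^d$ between two one-dimensional spaces must be an isomorphism. Next, suppose $\xi\in A^k$ satisfies $\phi(\xi)=0$. Then for every $\zeta\in A^{d-k}$, one has $\phi(\xi\zeta)=\phi(\xi)\phi(\zeta)=0$, so $\xi\zeta$ lies in $\ker(\phi^d)=0$, giving $\xi\zeta=0$ in $A^d$. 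The non-degeneracy of the pairing on $A^\bullet$ in degree $k$ then forces $\xi=0$.

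Neither step looks like it will pose a real obstacle; the only subtle point is bookkeeping in part (1), where one must verify that the putative degree map $\int_{A\otimes B}$ is well-defined and that the off-bidegree blocks of the pairing indeed vanish. Once the pairing is identified as block-diagonal with each block a tensor of non-degenerate forms, the rest is formal linear algebra.
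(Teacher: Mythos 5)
Your proof is correct, and since the paper only remarks that \Cref{prop:prodpoincare} is ``straightforward to check'' without supplying a proof, there is nothing to compare against: both parts of your argument are the natural ones. In part (1), you use that each $A^i$ and $B^j$ is finite-dimensional (this is part of the paper's definition of a graded finite $\mathbbm k$-algebra in \Cref{dfn:pda}), which is what makes ``tensor product of non-degenerate pairings is non-degenerate'' a valid linear-algebra step; it is worth flagging this explicitly since it is where infinite-dimensionality would break the argument. In part (2), the chain $\phi(\xi)=0 \Rightarrow \xi\zeta\in\ker(\phi^d)=0 \Rightarrow \xi=0$ is exactly the standard argument, and you correctly identify that $\phi^d$ is an isomorphism by comparing dimensions. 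No gaps.
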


We will use the following construction to establish that Chow rings of matroids are Poincar\'e duality algebras.

\begin{prop}\label{prop:transport} If $(A^\bullet, \int)$ is a Poincar\'e duality algebra of dimension $d$, and $f\in A^\bullet$ a homogeneous element of degree $k$.  Then the $\kk$-algebra
\[
A^\bullet / \operatorname{ann}(f), \quad \textnormal{ where } \operatorname{ann}(f) = \{a \in A^\bullet \mid af = 0\}
\]
is a Poincar\'e duality algebra of dimension $d-k$ with the \textbf{induced degree map} $\int_f$ defined by $\int_f( a+\operatorname{ann}(f)) :=  \int af$ for $a\in A^{d-k}$.
\end{prop}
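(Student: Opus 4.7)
The plan is to verify directly the two conditions in Definition \ref{dfn:pda} for $B^\bullet := A^\bullet / \operatorname{ann}(f)$, using Poincar\'e duality of $A^\bullet$ as the only real input. Since $f$ is homogeneous of degree $k$, the ideal $\operatorname{ann}(f)$ is homogeneous, and $B^\bullet$ inherits a grading with $B^i = A^i / \{a \in A^i : af = 0\}$. I may as well assume $f \neq 0$; otherwise $\operatorname{ann}(f) = A^\bullet$ and there is nothing to prove.

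First I would locate the top degree. For any $i > d-k$ and any $a \in A^i$, the product $af$ lies in $A^{i+k} = 0$, so $a \in \operatorname{ann}(f)$ and $B^i = 0$. Since $1 \cdot f = f \neq 0$, we have $B^0 = \kk$. To identify $B^{d-k}$ with $\kk$ via the formula $\int_f([a]) = \int af$, observe that the linear map $A^{d-k} \to \kk$ sending $a \mapsto \int af$ has kernel exactly $\operatorname{ann}(f) \cap A^{d-k}$, so it descends to a well-defined injection $\int_f : B^{d-k} \hookrightarrow \kk$. Surjectivity is equivalent to the existence of some $a \in A^{d-k}$ with $\int af \neq 0$, which is precisely what Poincar\'e duality of $A^\bullet$ guarantees for the nonzero class $f \in A^k$.

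The heart of the argument is non-degeneracy of the induced pairing
\[
B^i \times B^{d-k-i} \to B^{d-k} \simeq \kk, \quad ([a],[b]) \mapsto \int abf
\]
for each $0 \leq i \leq d-k$. Suppose $[a] \in B^i$ satisfies $\int abf = 0$ for every $b \in A^{d-k-i}$. Then $af \in A^{i+k}$ pairs trivially with all of $A^{d-(i+k)}$ under the PD pairing of $A^\bullet$, so $af = 0$ by non-degeneracy of that pairing, i.e.\ $[a] = 0$ in $B^i$. The symmetric argument shows that a class $[b] \in B^{d-k-i}$ pairing trivially with $B^i$ must also vanish. In effect, the pairing on $B^\bullet$ is the PD pairing on $A^\bullet$ twisted by multiplication by $f$, and the twist preserves non-degeneracy precisely on the quotient by $\operatorname{ann}(f)$.

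There is no real obstacle here: the only subtlety is being careful that well-definedness of $\int_f$ and its being an \emph{isomorphism} both rest on the PD of $A^\bullet$ applied to $f$ itself, which is where the hypothesis $f \neq 0$ enters.
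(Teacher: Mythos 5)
Your proof is correct and spells out exactly the direct verification that the paper dismisses as ``a straightforward check'' with only a citation to [MS05, Corollary I.2.3]. You check $B^i=0$ for $i>d-k$, identify $B^0\cong\kk$ and $B^{d-k}\cong\kk$, and deduce non-degeneracy of the induced pairing from non-degeneracy of the pairing on $A^\bullet$ (applied to $af$ rather than $a$), which is precisely the intended argument; the only implicit hypothesis, which you correctly flag, is $f\neq 0$.
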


\begin{proof} This is a straightforward check; see \cite[Corollary I.2.3.]{MS05} for example.\end{proof}

It will sometimes be convenient to identify elements of the ring $A^\bullet/\operatorname{ann}(f)$ to elements of the principal ideal $\langle f \rangle \subset A^\bullet$, with multiplication is by $af \cdot bf = (ab) \cdot f$.  The construction in \Cref{prop:transport} will arise in next subsection \S\ref{subsection:poincare} with $f$ being the Bergman class of a matroid.

\bigskip
The rest of this subsection on Poincar\'e duality algebras will not be needed until \Cref{section:HIT}.

\medskip
 We describe another way the construction in \Cref{prop:transport} arises in the context of Chow cohomology rings of fans.  Let $\Sigma$ be a $d$-dimensional smooth rational fan in $N_\RR$ for a lattice $N$, and let $\rho \in \Sigma(1)$ be ray.  Denote by $\overline u$ the image of $u\in N_\RR$ under the projection $N_\RR \twoheadrightarrow N_\RR/\operatorname{span}(\rho)$.  The \textbf{star of $\Sigma$ at $\rho$} is a $(d-1)$-dimensional fan in $N_\RR/\operatorname{span}(\rho)$ defined by
$$\operatorname{star}(\rho,\Sigma) := \{\overline \sigma \ | \ \textnormal{$\sigma\in \Sigma$ contains $\rho$}\}.$$
By definition of the Chow cohomology ring, one can check that there is a surjection $A^\bullet(\Sigma) \twoheadrightarrow A^\bullet(\operatorname{star}(\rho,\Sigma))$ determined by
$$x_{\rho'} \mapsto
\begin{cases}
x_{\overline{\rho'}} &\textnormal{ if $\rho'$ and $\rho$ form a cone in $\Sigma$}\\
0 &\textnormal{ otherwise}
\end{cases}\qquad \textnormal{for each $\rho' \neq \rho$}.
$$
Since $\langle x_{\rho'} \ | \ \textnormal{$\rho'$ and $\rho$ do not form a cone in $\Sigma$}\rangle \subset \operatorname{ann}_{A^\bullet(\Sigma)}(x_\rho)$, thus we get an induced map
$$\pi_\rho: A^\bullet(\operatorname{star}(\rho,\Sigma))\twoheadrightarrow A^\bullet(\Sigma)/\operatorname{ann}(x_\rho).$$


\medskip
In \Cref{section:HIT}, we will use the following criterion for when the map $\pi_\rho$ is an isomorphism.

\begin{prop}\label{prop:starann}\cite[Proposition 7.13]{AHK18}
Suppose that the Chow cohomology ring $A^\bullet(\Sigma)$ is a Poincar\'e duality algebra.  Then, the map $\pi_\rho: A^\bullet(\operatorname{star}(\rho,\Sigma))\twoheadrightarrow A^\bullet(\Sigma)/\operatorname{ann}(x_\rho)$ is an isomorphism if and only if $A^\bullet(\operatorname{star}(\rho,\Sigma))$ is a Poincar\'e duality algebra.
\end{prop}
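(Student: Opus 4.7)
The plan is first to reduce the statement to a general algebraic claim about surjections between Poincar\'e duality algebras. Applying \Cref{prop:transport} with $f = x_\rho \in A^1(\Sigma)$ shows that $A^\bullet(\Sigma)/\operatorname{ann}(x_\rho)$ is a Poincar\'e duality algebra of dimension $d-1$, whose induced degree map sends $\zeta + \operatorname{ann}(x_\rho) \mapsto \int_\Sigma \zeta \cdot x_\rho$. Since $\operatorname{star}(\rho,\Sigma)$ is a fan of dimension $d-1$, the ring $A^\bullet(\operatorname{star}(\rho,\Sigma))$ is concentrated in degrees $\leq d-1$; combined with the surjectivity of $\pi_\rho$ and the nonvanishing of $A^{d-1}(\Sigma)/\operatorname{ann}(x_\rho)$, this forces any Poincar\'e duality algebra structure on $A^\bullet(\operatorname{star}(\rho,\Sigma))$ to have top degree exactly $d-1$ as well.

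The forward direction is immediate: if $\pi_\rho$ is an isomorphism of graded rings, then the Poincar\'e duality algebra structure on $A^\bullet(\Sigma)/\operatorname{ann}(x_\rho)$ transports through $\pi_\rho$ to $A^\bullet(\operatorname{star}(\rho,\Sigma))$.

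For the reverse direction I would isolate the following general lemma: \emph{a graded surjection $\phi : B \twoheadrightarrow C$ between two Poincar\'e duality algebras of the same dimension $\ell$ is automatically an isomorphism.} To prove it, note that $B^\ell$ and $C^\ell$ are each one-dimensional, so the restriction $\phi|_{B^\ell}$ is a surjection between one-dimensional spaces, hence an isomorphism. If some nonzero homogeneous $\xi \in B^k \cap \ker \phi$ existed, then non-degeneracy of the Poincar\'e pairing on $B$ would yield $\zeta \in B^{\ell-k}$ with $\xi\zeta \neq 0$ in $B^\ell$; but $\phi(\xi\zeta) = \phi(\xi)\phi(\zeta) = 0$, contradicting injectivity of $\phi$ in degree $\ell$. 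Applied to $\phi = \pi_\rho$, this lemma finishes the reverse direction.

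The main obstacle is quite mild: it is really just the bookkeeping step that confirms any hypothetical Poincar\'e duality algebra structure on $A^\bullet(\operatorname{star}(\rho,\Sigma))$ must match the dimension $d-1$ of $A^\bullet(\Sigma)/\operatorname{ann}(x_\rho)$, so that the abstract surjection lemma applies. Once the top degrees agree, the Poincar\'e pairing on the source closes the argument with no inductive or geometric input.
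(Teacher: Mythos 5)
Your proposal is correct and follows the same route as the paper: reduce via Proposition \ref{prop:transport} to showing that $A^\bullet(\Sigma)/\operatorname{ann}(x_\rho)$ is Poincar\'e duality of dimension $d-1$, then invoke the fact that a graded surjection between Poincar\'e duality algebras of equal dimension is an isomorphism, which is exactly Proposition \ref{prop:prodpoincare}.(2). You additionally spell out the proof of that lemma and the dimension-matching bookkeeping (both of which the paper leaves implicit), but the underlying argument is identical.
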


\begin{proof} The algebra $A^\bullet(\Sigma)/\operatorname{ann}(x_\rho)$ is a Poincare duality algebra by \Cref{prop:transport}.  The statement thus follows from \Cref{prop:prodpoincare}.(2).
\end{proof}

\subsection{Poincar\'e duality for matroids}\label{subsection:poincare} We show that the Chow ring $A^\bullet(M)$ of a loopless matroid $M$ is a Poincar\'e duality algebra with $\int_M$ as the degree map.  While this was proved in \cite[Theorem 6.19]{AHK18}, we give a non-inductive proof by using the simplicial presentation.

\medskip
Our main theorem of the section is the following.

\begin{thm}\label{thm:transport}
Let $M$ be a loopless matroid of rank $r=d+1$ on a ground set $E = \{0,1,\ldots, n\}$, and consider the Bergman class $\Delta_M \in \operatorname{MW}_d(\Sigma_{A_n})$ as an element of $A^\bullet(\Sigma_{A_n})$ via the isomorphism $A^\bullet(\Sigma_{A_n}) \simeq \operatorname{MW}_{n-\bullet}(\Sigma_{A_n})$ in \Cref{thm:FS}.  Then, we have
\[
A^\bullet(M) \simeq A^\bullet(\Sigma_{A_n})/ \operatorname{ann}(\Delta_M).
\]
\end{thm}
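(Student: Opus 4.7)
The plan is to use the pullback map $\iota_M^* : A^\bullet(\Sigma_{A_n}) \twoheadrightarrow A^\bullet(M)$ and show that its kernel coincides with $\operatorname{ann}(\Delta_M)$. The surjectivity of $\iota_M^*$ is recorded in \S\ref{subsection:ChowMW}, so this identification will immediately yield the desired isomorphism of graded rings.

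For the easy containment $\ker(\iota_M^*) \subseteq \operatorname{ann}(\Delta_M)$, I would translate the ring multiplication in $A^\bullet(\Sigma_{A_n})$ into the cap action on Minkowski weights: since $\delta_{\Sigma_{A_n}} : A^\bullet(\Sigma_{A_n}) \xrightarrow{\sim} \operatorname{MW}_{n-\bullet}(\Sigma_{A_n})$ from \Cref{thm:FS} is an isomorphism of $A^\bullet(\Sigma_{A_n})$-modules, we have $\xi \cdot \Delta_M = 0$ in $A^\bullet(\Sigma_{A_n})$ if and only if $\xi \cap \Delta_M = 0$ in $\operatorname{MW}_\bullet(\Sigma_{A_n})$. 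The functoriality of the cap product (\Cref{rem:functorial}) yields $\xi \cap \Delta_M = \iota_M^*\xi \cap \Delta_M$, so $\iota_M^*\xi = 0$ forces $\xi \in \operatorname{ann}(\Delta_M)$.

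For the reverse containment, the same computation reduces the problem to the following claim: the cap-product map
\[
A^\bullet(M) \longrightarrow \operatorname{MW}_{d-\bullet}(\Sigma_M), \qquad \eta \longmapsto \eta \cap \Delta_M,
\]
is injective. This is where the simplicial presentation delivers the payoff. By \Cref{cor:monombasis}, the nested monomials $h_{F_1}^{a_1}\cdots h_{F_k}^{a_k}$ form a basis of $\myChow^\bullet(M) = A^\bullet(M)$. By \Cref{thm:monomnested}, the above cap-product map carries this nested basis bijectively onto the set of Bergman classes $\{\Delta_{M'}\}$ of loopless relative nested quotients $M' \twoheadleftarrow M$. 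By \Cref{prop:linindep}, this set is linearly independent already in $\operatorname{MW}_\bullet(\Sigma_{A_n})$, hence a fortiori in $\operatorname{MW}_\bullet(\Sigma_M)$ under the natural pushforward inclusion. A basis mapping to a linearly independent set gives injectivity, as required.

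The main technical obstacle has in effect already been paid for: it is the construction of the nested basis together with the combinatorial identification of its image as the set of Bergman classes of relative nested quotients, and their linear independence. Once those tools from \S\ref{subsection:nestedbasis} are assembled, the proof of \Cref{thm:transport} itself collapses into a short diagram-chase that translates cap products on Minkowski weights into ring multiplication in $A^\bullet(\Sigma_{A_n})$, and then combines surjectivity of $\iota_M^*$ with the injectivity just established.
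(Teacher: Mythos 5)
Your proposal is correct and follows essentially the same route as the paper. The paper packages the two containments into a single commuting square of surjections (its Lemma \ref{lem:formalcap}, which is exactly your functoriality argument), and then, just as you do, proves injectivity of $A^\bullet(M)\to\operatorname{MW}_\bullet(\Sigma_M)$ by combining the nested basis (\Cref{cor:monombasis}), its identification with Bergman classes of relative nested quotients (\Cref{thm:monomnested}), and their linear independence (\Cref{prop:linindep}).
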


Since $A^\bullet(\Sigma_{A_n})$ is a Poincare duality algebra (\Cref{thm:FS}), Proposition \ref{prop:transport} immediately implies the following corollary.

\begin{cor}\label{cor:poincare}
The Chow ring $A^\bullet(M)$ is a graded Poincar\'e duality algebra of dimension $\rk(M) - 1$ with $\int_M$ as the degree map.
\end{cor}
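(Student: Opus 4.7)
The plan is to deduce the corollary by applying Proposition \ref{prop:transport} to the isomorphism produced by Theorem \ref{thm:transport}. By Theorem \ref{thm:FS}, the Chow cohomology ring $A^\bullet(\Sigma_{A_n})$ of the smooth complete braid fan is itself a graded Poincar\'e duality algebra of dimension $n$ with degree map $\int_{\Sigma_{A_n}}$ and perfect pairing $(\alpha,\beta)\mapsto \int_{\Sigma_{A_n}} \alpha\beta$. Under the isomorphism $\delta_{\Sigma_{A_n}}\colon A^{n-d}(\Sigma_{A_n}) \xrightarrow{\sim} \operatorname{MW}_d(\Sigma_{A_n})$, the Bergman class $\Delta_M$ pulls back to a homogeneous class $\widetilde{\Delta}_M \in A^{n-d}(\Sigma_{A_n})$, where $d = \operatorname{rk}(M)-1$. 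Feeding $f = \widetilde{\Delta}_M$ into Proposition \ref{prop:transport} then produces $A^\bullet(\Sigma_{A_n})/\operatorname{ann}(\widetilde{\Delta}_M)$ as a Poincar\'e duality algebra of dimension $n-(n-d)=d$, with induced degree map $\xi + \operatorname{ann}(\widetilde{\Delta}_M) \mapsto \int_{\Sigma_{A_n}}\xi \cdot \widetilde{\Delta}_M$. Combining with the ring isomorphism of Theorem \ref{thm:transport} identifies this quotient with $A^\bullet(M)$, giving the Poincar\'e duality property in the claimed dimension $\operatorname{rk}(M)-1$.

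The remaining task is to verify that the induced degree map matches $\int_M$ under this identification. For any lift $\xi \in A^d(\Sigma_{A_n})$ of a class $\iota_M^*\xi \in A^d(M)$, I would chain the equalities
$$\int_{\Sigma_{A_n}} \xi \cdot \widetilde{\Delta}_M \;=\; (\xi \cdot \widetilde{\Delta}_M)\cap \Delta_{\Sigma_{A_n}} \;=\; \xi \cap (\widetilde{\Delta}_M \cap \Delta_{\Sigma_{A_n}}) \;=\; \xi \cap \Delta_M,$$
where the first equality is the definition of $\int_{\Sigma_{A_n}}$, the second uses the $A^\bullet(\Sigma_{A_n})$-module structure on $\operatorname{MW}_\bullet(\Sigma_{A_n})$ coming from the cap product, and the third uses $\delta_{\Sigma_{A_n}}(\widetilde{\Delta}_M) = \Delta_M$. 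By the functoriality of cap product along the inclusion $\iota_M\colon \Sigma_M \hookrightarrow \Sigma_{A_n}$ recorded in Remark \ref{rem:functorial}, the last expression equals $\iota_M^*\xi \cap \Delta_M = \int_M(\iota_M^*\xi)$, exactly as required.

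Since the substantive content—the surjectivity of $\iota_M^*$ together with the identification of its kernel as $\operatorname{ann}(\widetilde{\Delta}_M)$—has already been packaged into Theorem \ref{thm:transport}, and since Proposition \ref{prop:transport} is a standard feature of Poincar\'e duality algebras, there is essentially no remaining obstacle. The only point requiring care is the bookkeeping of the two roles played by $\Delta_M$, namely as a Minkowski weight on $\Sigma_{A_n}$ and as a class $\widetilde{\Delta}_M$ in $A^{n-d}(\Sigma_{A_n})$ under Theorem \ref{thm:FS}, and the correct invocation of cap-product functoriality in the final step.
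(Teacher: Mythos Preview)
Your argument is correct and follows exactly the paper's approach: apply Proposition~\ref{prop:transport} to the homogeneous element $\widetilde{\Delta}_M \in A^{n-d}(\Sigma_{A_n})$, using that $A^\bullet(\Sigma_{A_n})$ is a Poincar\'e duality algebra by Theorem~\ref{thm:FS}, and then invoke the isomorphism of Theorem~\ref{thm:transport}. The paper states this in a single sentence; your additional verification that the induced degree map agrees with $\int_M$ via cap-product functoriality is a welcome piece of bookkeeping that the paper leaves implicit.
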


\begin{rem}
Because $\Sigma_{A_n}$ is a smooth projective fan, there exists a purely combinatorial proof of the Poincar\'e duality for its Chow cohomology ring via the line shelling of the fan; see \cite{McM93,FK10}.  While Bergman fans of matroids are also shellable \cite{Bjo92}, they are not complete, and the arguments of \cite{McM93,FK10} do not readily modify to give Poincar\'e duality for Chow rings of matroids.
\end{rem}

Rephrasing \Cref{cor:poincare} yields the following generalization of \cite[Corollary 3.13]{Ham17}.

\begin{cor}\label{cor:fullhampe}
For each $0\leq c\leq d$, the cap product map
$$A^c(M) \to \operatorname{MW}_{d-c}(\Sigma_M), \quad \xi \mapsto \xi\cap \Delta_M$$
is an isomorphism of $\RR$-vector spaces.  Thus, the Bergman classes of relative nested quotients form a basis of $\operatorname{MW}_\bullet(\Sigma_M)$.
\end{cor}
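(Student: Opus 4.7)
The plan is to identify the cap product with $\Delta_M$ as an incarnation of Poincar\'e duality for $A^\bullet(M)$ under Kronecker duality, so that the corollary falls out of Corollary \ref{cor:poincare} and the combinatorics already established for the nested basis.

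First I would set up two isomorphisms. By Corollary \ref{cor:poincare}, the Chow ring $A^\bullet(M)$ is a Poincar\'e duality algebra of dimension $d$ with degree map $\int_M$, so the pairing
\[
\Phi_c : A^c(M) \overset{\sim}\to \operatorname{Hom}_\RR(A^{d-c}(M), \RR), \quad \xi \mapsto \bigl(\zeta \mapsto {\textstyle\int_M} \xi\zeta\bigr)
\]
is an isomorphism for every $0 \leq c \leq d$. On the other hand, Lemma \ref{lem:kronecker} (applied to the smooth fan $\Sigma_M$, with $\RR$-coefficients) gives the Kronecker duality isomorphism
\[
t_{\Sigma_M} : \operatorname{MW}_{d-c}(\Sigma_M) \overset{\sim}\to \operatorname{Hom}_\RR(A^{d-c}(\Sigma_M), \RR) = \operatorname{Hom}_\RR(A^{d-c}(M), \RR).
\]

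Next I would verify that the cap product map in the statement fits into the triangle $\Phi_c = t_{\Sigma_M} \circ (\,\cdot \cap \Delta_M\,)$. This is a direct unwinding of the definition of the cap product: for $\xi \in A^c(M)$ and a top-dimensional monomial $\prod_{\rho \in \sigma(1)} x_\rho$ associated to $\sigma \in \Sigma_M(d-c)$, we compute
\[
\bigl(t_{\Sigma_M}(\xi \cap \Delta_M)\bigr)\Bigl(\prod_{\rho \in \sigma(1)} x_\rho\Bigr) = (\xi \cap \Delta_M)(\sigma) = (t_{\Sigma_M}\Delta_M)\Bigl(\xi \cdot \prod_{\rho \in \sigma(1)} x_\rho\Bigr) = \int_M \xi \cdot \prod_{\rho \in \sigma(1)} x_\rho,
\]
which equals $\Phi_c(\xi)$ evaluated on the same monomial. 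Since such squarefree monomials span $A^{d-c}(M)$, the identity of functionals follows. As $\Phi_c$ and $t_{\Sigma_M}$ are both isomorphisms, so is the cap product map $\xi \mapsto \xi \cap \Delta_M$, which proves the first assertion.

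For the basis statement, I would invoke the combinatorial bookkeeping already in hand: by Corollary \ref{cor:monombasis} the nested monomials form an $\RR$-basis of $\myChow^c(M) = A^c(M)$, and by Theorem \ref{thm:monomnested} the cap product sends this basis bijectively to the set of Bergman classes $\Delta_{M'}$ of loopless relative nested quotients $M' \twoheadleftarrow M$ with $\operatorname{rk}(M') = \operatorname{rk}(M) - c$. Applying the isomorphism just established transports bases to bases, yielding the second assertion. I do not expect a real obstacle: all the heavy lifting was done in \Cref{thm:transport} (which gives Poincar\'e duality) and in \Cref{thm:monomnested}; the only mild subtlety is the compatibility check in the middle paragraph, which is a routine unfolding of the cap product formula from \S\ref{subsection:ChowMW}.
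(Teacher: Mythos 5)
Your proof is correct and takes essentially the same route as the paper's (one-line) proof, which simply cites Corollary \ref{cor:poincare}, Lemma \ref{lem:kronecker}, and Theorem \ref{thm:monomnested}. Your middle paragraph merely unwinds the cap-product compatibility $\Phi_c = t_{\Sigma_M}\circ(\,\cdot\cap\Delta_M)$ that the paper leaves implicit, relying as you note on the fact (built into the well-definedness of $t_{\Sigma_M}$ in Lemma \ref{lem:kronecker}) that squarefree monomials span $A^{d-c}(\Sigma_M)$.
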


\begin{proof}
The first statement follows from \Cref{cor:poincare} and $\operatorname{Hom}(A^c(M),\RR) \simeq \operatorname{MW}_{d-c}(\Sigma_M)$ (Lemma \ref{lem:kronecker}).  Theorem \ref{thm:monomnested} then implies the second statement.
\end{proof}

\begin{rem}[cf.\ \ref{obs:transport}]\label{rem:geomtransportproof}
Let $Y_{\mathscr R(M)}$ be the wonderful compactification of a realization of $M$.  Since the pullback map $A^\bullet(X_{\Sigma_{A_n}}) \to A^\bullet(Y_{\mathscr R(M)})$ along the closed embedding $Y_{\mathscr R(M)}\hookrightarrow X_{A_n}$ is surjective (\Cref{rem:matgeom}), we have $A^\bullet(M) \simeq A^\bullet(Y_{\mathscr R(M)}) \simeq A^\bullet(X_{\Sigma_{A_n}})/\operatorname{ann}([Y_{\mathscr R(M)}])$.
\end{rem}

\begin{proof}[Proof of \Cref{thm:transport}]
Recall that the isomorphism $A^\bullet(\Sigma_{A_n}) \simeq \operatorname{MW}_{n-\bullet}(\Sigma_{A_n})$ makes the set of Minkowski weights into a graded ring, denoted $\operatorname{MW}^{\bullet}(\Sigma_{A_n})$.  Let $\iota_M^*$ be the pullback map of the inclusion $\iota_M: \Sigma_M \hookrightarrow \Sigma_{A_n}$.  A formal property of cap products, given below in \Cref{lem:formalcap}, gives us a commuting diagram of surjections
\[
\begin{tikzcd}
&A^\bullet({\Sigma_{A_n}}) \arrow[r, "\sim"] \arrow[d, "\iota_M^*"', two heads]& \operatorname{MW}^\bullet(\Sigma_{A_n}) \arrow[d, two heads] & \\
&A^\bullet(M) \arrow[r, two heads] &\operatorname{MW}^\bullet(\Sigma_{A_n})/\operatorname{ann}(\Delta_M)
\end{tikzcd} 
\qquad
\begin{tikzcd}
\xi \ar[r,mapsto] \ar[d,mapsto] & \xi \cap \Delta_{\Sigma_{A_n}} \ar[d,mapsto] \\
\iota_M^* \xi \ar[r,mapsto] & \xi \cap \Delta_M.
\end{tikzcd}
\]
Here, we have identified the elements of $\operatorname{MW}^\bullet(\Sigma_{A_n})/\operatorname{ann}(\Delta_M)$ with the elements of the principal ideal $\langle \Delta_M\rangle \subset \operatorname{MW}^\bullet(\Sigma_{A_n})$, which is generated by Minkowski weights of the form $\xi \cap \Delta_M$ where $\xi \in A^\bullet(M)$.  \Cref{prop:linindep} then states that the bottom horizontal map preserves linear independence, and hence is injective as well.
\end{proof}

\begin{lem}\label{lem:formalcap}
Let $\iota^*$ be the pullback map of an inclusion of fans $\iota: \Sigma' \hookrightarrow \Sigma$ where $\Sigma$ is complete, and let $\delta_{\Sigma}: A^\bullet(\Sigma) \overset\sim\to \operatorname{MW}^\bullet(\Sigma)$ be the isomorphism in \Cref{thm:FS}.  Suppose $\Delta$ is a Minkowski weight on $\Sigma$ whose support $|\Delta|$ is contained the support $|\Sigma'|$.  Then we have a diagram
\[
\begin{tikzcd}
&A^\bullet({\Sigma}) \arrow[r, "\sim"] \arrow[d, "\iota^*"', two heads]& \operatorname{MW}^\bullet(\Sigma) \arrow[d, two heads] & \\
&A^\bullet(\Sigma') \arrow[r, two heads] &\operatorname{MW}^\bullet(\Sigma)/\operatorname{ann}(\Delta)
\end{tikzcd} 
\qquad
\begin{tikzcd}
\xi \ar[r,mapsto] \ar[d,mapsto] & \xi \cap \Delta_{\Sigma} \ar[d,mapsto] \\
\iota^* \xi \ar[r,mapsto] & \xi \cap \Delta.
\end{tikzcd}
\]
\end{lem}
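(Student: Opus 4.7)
The plan is to establish the commutativity of the diagram in three steps: first verify that $\Delta$ descends to a Minkowski weight on $\Sigma'$, then use the functoriality of the cap product recorded in Remark~\ref{rem:functorial} to show that the map $\xi \mapsto \xi \cap \Delta$ factors through the pullback $\iota^*$, and finally identify the right vertical arrow via the principal ideal $\langle \Delta \rangle$.

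The first step rests on the elementary observation that since $\Sigma' \subseteq \Sigma$ is a subfan, every cone $\sigma \in \Sigma$ lying set-theoretically inside $|\Sigma'|$ already belongs to $\Sigma'$: a point in the relative interior of $\sigma$ lies in the relative interior of some $\tau \in \Sigma'$, so $\sigma \cap \tau$ is a face of $\sigma$ meeting its relative interior, forcing $\sigma \subseteq \tau$; hence $\sigma$ is a face of $\tau$ and therefore $\sigma \in \Sigma'$. Combined with the hypothesis $|\Delta| \subseteq |\Sigma'|$, this shows $\Delta$ is supported only on cones of $\Sigma'$, and the balancing condition at each codimension-one cone of $\Sigma'$ carries over verbatim from the balancing condition in $\Sigma$, because the only contributing cones are those where $\Delta$ is nonzero and these already lie in $\Sigma'$. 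Thus $\Delta$ is naturally a Minkowski weight on $\Sigma'$.

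Applying Remark~\ref{rem:functorial} with $\Delta' := \Delta \in \operatorname{MW}_\bullet(\Sigma')$ then yields $\iota^*\xi \cap \Delta = \xi \cap \Delta$ for every $\xi \in A^\bullet(\Sigma)$. This identity says $\ker \iota^* \subseteq \operatorname{ann}(\Delta)$ with respect to the cap product, so the composite $A^\bullet(\Sigma) \to \operatorname{MW}^\bullet(\Sigma)/\operatorname{ann}(\Delta)$, $\xi \mapsto \xi \cap \Delta$, factors uniquely through the surjection $\iota^*$, producing the desired bottom arrow $\iota^*\xi \mapsto \xi \cap \Delta$. To complete the commutativity, identifying $\operatorname{MW}^\bullet(\Sigma)/\operatorname{ann}(\Delta)$ with the principal ideal $\langle \Delta \rangle \subseteq \operatorname{MW}^\bullet(\Sigma)$ turns the right vertical map into $\eta \mapsto \eta \cap_{st} \Delta$, which sends $\xi \cap \Delta_\Sigma = \delta_\Sigma(\xi)$ to $\delta_\Sigma(\xi) \cap_{st} \Delta = \xi \cap \Delta$ by the relation $\xi \cap \Delta = \delta_\Sigma(\xi) \cap_{st} \Delta$ recorded after the definition of stable intersection in \S\ref{subsection:ChowMW}. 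Surjectivity of all four arrows is automatic since the top is an isomorphism, the left is a pullback, and the right is a quotient. The only mild subtlety in the argument is justifying the passage of $\Delta$ to a Minkowski weight on $\Sigma'$ so that Remark~\ref{rem:functorial} applies verbatim; everything else is formal.
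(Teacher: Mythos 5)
Your proof is correct and takes essentially the same route as the paper: both arguments reduce the claim to checking that $\ker\iota^* \subseteq \operatorname{ann}(\Delta)$, both rely on viewing $\Delta$ as a Minkowski weight on $\Sigma'$ and invoking the functoriality of the cap product from Remark~\ref{rem:functorial}. The only cosmetic difference is that you apply the functoriality identity $\iota^*\xi \cap \Delta = \xi \cap \Delta$ to an arbitrary $\xi \in A^\bullet(\Sigma)$ (and you spell out the subfan lemma that the paper leaves implicit), whereas the paper checks the vanishing directly on the ideal generators $x_\rho$ with $\rho \notin \Sigma'(1)$; these are the same observation.
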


\begin{proof}
We need show that the kernel $\langle x_\rho \mid \rho \in \Sigma(1)\setminus \Sigma'(1)\rangle \subset A^\bullet(\Sigma)$ of the pullback map $\iota^*$ is contained in the kernel of the map $A^\bullet(\Sigma) \overset\sim\to \operatorname{MW}^\bullet(\Sigma) \twoheadrightarrow \operatorname{MW}^\bullet(\Sigma)/\operatorname{ann}(\Delta)$.  Since $|\Delta| \subseteq |\Sigma'|$, we may consider $\Delta$ as a Minkowski weight on $\Sigma'$, and thus by functoriality of the cap product (\Cref{rem:functorial}), we have $x_\rho \cap \Delta = \iota^* x_\rho \cap \Delta = 0 \cap \Delta = 0$ for $x_\rho \in A^\bullet(\Sigma)$ where $\rho \in \Sigma(1)\setminus \Sigma'(1)$.
\end{proof}


\section{Log-concavity of the volume polynomial}\label{section:volumepolynomial}

A presentation of a graded Poincar\'e duality algebra $A^\bullet$ can be encoded via the Macaulay inverse system into a single polynomial $VP_A$, called the volume polynomial of $A^\bullet$ \cite[\S21.2]{Eis95}.  In geometric contexts, the volume polynomial takes on an additional meaning:
\begin{itemize}
\item If $A^\bullet$ is the ring of algebraic cycles modulo numerical equivalence  on a smooth projective variety, then $VP_A$ measures degrees of ample divisors (see \cite{ELMNP05}).
\item If $A^\bullet$ is the Chow cohomology ring of a complete smooth fan $\Sigma$, then $VP_A$ measures the volumes of polytopes whose normal fans coarsen $\Sigma$ (see \cite[\S13]{CLS11}).
\end{itemize}
In both geometric contexts, the volume polynomial of $A^\bullet$ is positive and log-concave on the ample cone when considered as a function $A^1 \to \RR$.

\medskip
In this section, we give a combinatorial formula for the volume polynomial $VP_M^\nabla$ of the Chow ring $\myChow^\bullet(M)$ of a loopless matroid $M$.  We show that, as in the geometric cases, the volume polynomial  $VP_M^\nabla$ when regarded as a function $\myChow^1(M) \to \RR$ is both positive and log-concave on a subcone $\mathscr K_M^\nabla$ of the ample cone $\mathscr K_M$ generated by the nontrivial simplicial generators.  While the results of \cite{AHK18} imply that the volume polynomial of a matroid satisfies such properties, we give here an independent proof by establishing that $VP_M^\nabla$ is a Lorentzian polynomial as defined in \cite{BH20}.  In the next section \S\ref{section:HIT}, we build upon the results of this section to conclude that $VP_M^\nabla$ is both positive and log-concave on the entire ample cone $\mathscr K_M$.

\subsection{Volume polynomials and Lorentzian polynomials}\label{subsection:Lorentzian} Here we review the notion of volume polynomials and how they generalize to Lorentzian polynomials.

\medskip
One can encode a graded Poincar\'e duality algebra into a single polynomial called the volume polynomial as follows.

\begin{defn}
Let $(A^\bullet, \int)$ be a graded Poincar\'e duality algebra of dimension $d$ that is generated in degree 1, with a chosen presentation $A^\bullet = \mathbbm k [x_1, \ldots, x_s]/I$ and a degree map $\int: A^d \to \mathbbm k$.  Then its \textbf{volume polynomial} $VP_A$ is a multivariate polynomial in $\mathbbm k[t_1, \ldots, t_s]$ defined by
$$VP_A(t_1, \ldots, t_s) := \int (t_1x_1 + \cdots + t_sx_s)^d$$
where we extend the degree map $\int$ to $A[t_1,\ldots, t_s] \to \mathbbm k [t_1, \ldots, t_s]$.
\end{defn}

If $(A^\bullet, \int)$ is a Poincar\'e duality algebra with a chosen presentation $A^\bullet = \mathbbm k[x_1, \ldots, x_s]/I$, then the defining ideal $I$ can be recovered from the volume polynomial $VP_A$ as follows \cite[Lemma 13.4.7]{CLS11}
\[ I = \{f(x_1, \ldots, x_s) \in \mathbbm k[x_1, \ldots, x_s] \ | \textstyle \ f(\frac{\partial}{\partial t_1}, \ldots, \frac{\partial}{\partial t_s}) \cdot VP_A(t_1, \ldots, t_s) = 0\}. \]

\medskip
In \cite{BH20}, the authors define Lorentzian polynomials as a generalization of volume polynomials in algebraic geometry and stable polynomials in optimization.
Here we briefly summarize the relevant results.

\begin{defn} A homogeneous polynomial $f\in \RR[x_1, \ldots, x_n]$ of degree $d$ is \textbf{strictly Lorentzian} if its support consists of all monomials in $x$ of degree $d$, all of its coefficients are positive, and any of its $(d-2)$nd order partial differentiation $\partial_{i_1} \cdots \partial_{i_{d-2}} f$ has Hessian matrix with Lorentzian signature $(+,-,-,\ldots, -)$.  \textbf{Lorentzian polynomials} are polynomials that can be obtained as a limit of strictly Lorentzian polynomials.
\end{defn}

To characterize Lorentzian polynomials, we need a combinatorial notion which generalizes the exchange axiom for matroids:  a collection of points $J\subset \ZZ_{\geq 0}^n$ is \textbf{M-convex} if for any $\alpha,\beta\in J$ and $i\in [n]$ with $\alpha_i > \beta_i$ there exists $j\in [n]$ such that $\alpha_j< \beta_j$ and $\alpha-e_i+e_j \in J$.  When the elements of $J$ all have the same coordinate sum, this is equivalent to stating that the convex hull of $J$ is a generalized permutohedra \cite[Theorem 4.15]{Mur03}.

\medskip
The following characterization can be seen as a linear algebraic abstraction of the proof of Teissier-Khovanskii inequalities via the Hodge index theorem for algebraic surfaces.

\begin{thm}\label{thm:Lorentzian}\cite[Theorem 5.1]{BH20} A homogeneous polynomial $f\in \RR[x_1, \ldots, x_n]$ of degree $d$ with nonnegative coefficients is Lorentzian if and only if the following two conditions are satisfied:
\begin{enumerate}
\item The support of $f$ is M-convex, and
\item The Hessian matrix of $\partial_{i_1} \cdots \partial_{i_{d-2}} f$ has at most one positive eigenvalue for any choice of $(d-2)$nd order partial differentiation.
\end{enumerate}
\end{thm}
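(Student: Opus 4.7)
The theorem asserts that the intrinsic conditions (1) and (2) characterize the topological closure of strictly Lorentzian polynomials. I would prove the two inclusions separately, writing $\tilde L^d_n$ for the set of $f$ satisfying (1) and (2).

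For the forward inclusion (Lorentzian implies (1) and (2)), the strategy is to show that $\tilde L^d_n$ is closed in the space of homogeneous polynomials of degree $d$ with nonnegative coefficients and contains every strictly Lorentzian polynomial; this would immediately yield $L^d_n = \overline{\textnormal{strictly Lorentzian}} \subseteq \tilde L^d_n$. Any strictly Lorentzian $f$ has all monomial coefficients positive, so its support is the full simplex (trivially M-convex), and by definition the signature $(+, -, \ldots, -)$ on every $(d-2)$-order Hessian yields at most one positive eigenvalue. Condition (2) is closed under limits because eigenvalues of a Hessian depend continuously on $f$ and the number of positive eigenvalues is upper semicontinuous. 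Condition (1) is more delicate, since the support can strictly shrink in a limit: the point is that if the limit support failed M-convexity, one could locate a $(d-2)$-order Hessian of the limit with two strictly positive eigenvalues, by a Cauchy-interlacing argument applied to a well-chosen coordinate restriction, contradicting the upper-semicontinuity argument that already gave condition (2) in the limit.

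For the backward inclusion (conditions (1) and (2) imply Lorentzian), I would induct on the degree $d$. The base case $d = 2$ is linear-algebraic: a quadratic with at most one positive eigenvalue and M-convex support can be perturbed by a small indefinite quadratic supported on the missing monomials to produce a strictly Lorentzian approximation. For the inductive step, observe that each partial derivative $\partial_i f$ has degree $d-1$ and still satisfies (1) and (2): the M-convexity of $\operatorname{supp}(\partial_i f)$ follows from the exchange axiom on $\operatorname{supp}(f)$ applied to pairs involving the direction $e_i$, and condition (2) for $\partial_i f$ is literally a subset of the Hessians appearing in condition (2) for $f$. By induction each $\partial_i f$ is Lorentzian, hence approximable by strictly Lorentzian polynomials. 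The final step is to lift these approximations of the partials back to an approximation of $f$ itself; one natural approach is to pass to the polarization of $f$ as a symmetric multilinear form and perturb coherently, then depolarize.

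The main obstacle I expect is precisely this coherent-lifting step in the backward direction: producing a single perturbation $f_\epsilon$ of $f$ that simultaneously fills in the missing support, keeps all coefficients nonnegative, and pushes every $(d-2)$-order Hessian into the open cone of Lorentzian-signature matrices. A closely related difficulty is the closedness of condition (1) in the forward direction, which is the combinatorial heart of the theorem and requires genuine interaction between the Hessian eigenvalue bound and the exchange axiom for M-convex sets. Both difficulties reflect the tension between the algebraic side (Hessian signatures) and the combinatorial side (M-convex supports), so the cleanest proof is likely to move between the two via the polarization correspondence between degree-$d$ polynomials and symmetric $d$-linear forms, allowing bilinear Hodge-index-type inequalities to serve as the key inductive engine.
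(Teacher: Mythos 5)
This statement is not proved in the paper; it is cited verbatim as \cite[Theorem 5.1]{BH19}, so there is no internal proof to compare against. What follows is an assessment of whether your sketch would, if completed, reprove the Br\"and\'en--Huh theorem.

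Your sketch correctly identifies the overall shape of the argument and correctly isolates its two hard points, but both of them are left genuinely open. In the forward direction you are right that condition (2) is preserved in limits (the set of symmetric matrices with at most one positive eigenvalue is closed), but the closedness of condition (1) is not established: you assert that a failure of M-convexity in the limit would produce a $(d-2)$nd order Hessian with two positive eigenvalues ``by a Cauchy interlacing argument applied to a well-chosen coordinate restriction,'' but you never say which restriction or why the interlacing forces two positive eigenvalues. This implication --- that a Hessian-signature bound forces M-convexity of the support --- is essentially the content of Br\"and\'en--Huh's Theorem 3.10 and is a real theorem, not a routine linear-algebra check; without it your forward inclusion does not close. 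In the backward direction, the induction on degree is sound up to the point where you have all $\partial_i f$ Lorentzian, but as you yourself note, that does \emph{not} imply $f$ is Lorentzian: one must additionally use the M-convexity of $\operatorname{supp}(f)$ itself to control how the partial derivatives fit together, and this is where the actual work is. Br\"and\'en--Huh resolve this not by ``polarization and coherent perturbation'' but by establishing a recursive characterization (their Theorem 2.25), whose proof hinges on a connectedness property of M-convex supports and an explicit deformation argument to reach the open stratum of strictly Lorentzian polynomials. Gesturing at polarization does not substitute for this; in particular nothing in your sketch explains how to simultaneously fill the missing support, keep coefficients nonnegative, and push every relevant Hessian into the Lorentzian-signature cone, which is precisely the obstacle you flag. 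In short: the architecture is plausible, but the two load-bearing steps --- closedness of M-convexity under Lorentzian limits, and the reconstruction of $f$ from its partial derivatives using M-convexity of $\operatorname{supp}(f)$ --- are the theorem, and neither is proved.
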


The Lorentzian property is preserved under nonnegative linear change of variables:

\begin{prop}\label{prop:opLorentzian} \cite[Theorem 2.10]{BH20} If $f\in \RR[x_1, \ldots, x_n]$ is Lorentzian, then so is $f(Ax) \in \RR[x_1, \ldots, x_m]$ for any $n\times m$ matrix $A$ with non-negative entries.
\end{prop}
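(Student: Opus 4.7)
The plan is to verify the two characterizing conditions of Theorem~\ref{thm:Lorentzian}---$M$-convexity of the support and the ``at most one positive eigenvalue'' condition for Hessians of $(d-2)$nd-order partial derivatives---directly on $g(x) := f(Ax)$. By continuity, and since the Lorentzian cone is closed by definition, I would first reduce to the case where $f$ is strictly Lorentzian and $A$ has strictly positive entries. In this case, expanding
\[
g(x) \;=\; \sum_\alpha c_\alpha \prod_i \Bigl(\textstyle\sum_j A_{ij} x_j\Bigr)^{\alpha_i}
\]
with all $c_\alpha, A_{ij} > 0$ shows that every monomial of degree $d$ in $x_1, \ldots, x_m$ appears with strictly positive coefficient, so $\operatorname{supp}(g)$ is the full simplex $\{\beta \in \ZZ_{\geq 0}^m : |\beta| = d\}$, which is trivially $M$-convex.

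For the Hessian condition on $g$, the chain rule gives, for every multi-index $I = (i_1, \ldots, i_{d-2})$,
\[
\partial_I g(x) \;=\; q_I(Ax), \qquad q_I(y) \;:=\; \partial_{A^{(i_1)}} \cdots \partial_{A^{(i_{d-2})}} f(y),
\]
where $A^{(i)}$ denotes the $i$-th column of $A$, viewed as a nonnegative direction in $\RR^n$. Since $\partial_I g$ is the pullback of the quadratic form $q_I$ under the linear map $A \colon \RR^m \to \RR^n$, its Hessian equals $A^T \operatorname{Hess}(q_I) A$, and such a pullback cannot increase the positive index of inertia of a symmetric form (Sylvester's law). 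Thus the proposition reduces to the following key lemma: if $f$ is Lorentzian of degree $d$ and $v_1, \ldots, v_{d-2} \in \RR^n_{\geq 0}$, then $\operatorname{Hess}(\partial_{v_1} \cdots \partial_{v_{d-2}} f)$ has at most one positive eigenvalue.

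This key lemma is the main obstacle. The naïve approach---expanding $\partial_v f = \sum_i v_i \partial_i f$ and summing Hessians---fails, because a sum of symmetric matrices each with at most one positive eigenvalue can acquire a second positive eigenvalue (easy $2 \times 2$ examples exist). I would prove the lemma by induction on the number $d - 2$ of directional derivatives, with the base case the degree-2 Hessian condition on $f$ itself, and the inductive step requiring that $\partial_v f$ is again Lorentzian for every $v \in \RR^n_{\geq 0}$. Establishing this closure of Lorentzian polynomials under nonnegative directional derivatives is precisely where the $M$-convexity of supports has to be brought into play in tandem with the Hessian signature condition, in order to rule out the ``extra'' positive eigenvalues that would otherwise appear in sums of Hessians; carrying out this interaction is the technical heart of the argument in \cite{BH19}. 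Once the lemma is granted, Theorem~\ref{thm:Lorentzian} reassembles the two conditions into the Lorentzian property of $g = f(Ax)$.
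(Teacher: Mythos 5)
The paper gives no proof of this proposition: it is imported verbatim as a citation of \cite[Theorem 2.10]{BH19}, so there is no internal argument to compare against. I'll therefore evaluate your reconstruction on its own terms.

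Your reduction is structurally sound. The limiting argument to pass to strictly Lorentzian $f$ and strictly positive $A$ is fine, the chain-rule computation $\partial_{j_1}\cdots\partial_{j_{d-2}}\,f(Ax) = \bigl(\partial_{A^{(j_1)}}\cdots\partial_{A^{(j_{d-2})}}f\bigr)(Ax)$ is correct, and Sylvester's inertia argument correctly reduces the Hessian condition for $g=f(Ax)$ to a statement about mixed nonnegative directional derivatives of $f$. You also correctly diagnose why the naive ``sum of Hessians'' argument cannot work: positive indices of inertia do not behave subadditively.

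The genuine gap is that the key lemma---that $\partial_{v_1}\cdots\partial_{v_{d-2}}f$ has Hessian with at most one positive eigenvalue whenever $f$ is Lorentzian and the $v_i$ are nonnegative, or equivalently that $\partial_v$ preserves the Lorentzian class for $v \in \RR^n_{\geq 0}$---is left entirely unproved. You note that ``carrying out this interaction is the technical heart of the argument in \cite{BH19}'' and then stop. So what you have written is an accurate high-level outline of where the difficulty lies, not a proof: the content of the proposition has been transferred, essentially undiminished, into an unproved lemma. Since the original statement is itself only cited in the paper, if your intent is to cite rather than prove, you should cite the directional-derivative closure from \cite{BH19} precisely rather than gesture at it; if your intent is to give a self-contained argument, the lemma must actually be established (and doing so requires the interplay between $M$-convexity of the support and the inductive Hessian structure, which you correctly identify but do not execute).
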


Applications to log-concavity phenomena in combinatorics arise from the following properties of Lorentzian polynomials.

\begin{thm}\label{thm:logconc}
Let $f \in\RR[x_1, \ldots, x_n]$ be a homogeneous polynomial with nonnegative coefficients. The Lorentzian property of $f$ can be characterized via log-concavity properties as follows.
\begin{enumerate}
\item\cite[Theorem 5.3]{BH20} A homogeneous polynomial $f$ is Lorentzian if and only if $f$ is strongly log-concave, in the sense that if $g$ is any partial derivative of $f$ of any order, then either $g$ is identically zero or $\log g$ is concave on the positive orthant $\RR^n_{>0}$.
\item\cite[Example 5.2]{BH20} If $n=2$, so that $f = \sum_{k=0}^d a_k x_1^{k}x_2^{d-k}$, then $f$ is Lorentzian if and only if $(a_0, a_1, \ldots, a_d)$ has no internal zeroes and is ultra log-concave, that is, $$a_{k_1}a_{k_3} \neq 0 \implies a_{k_2} \neq 0 \textnormal{ for all $0\leq k_1 < k_2 < k_3 \leq d$, and }$$
$$\frac{a_k^2}{{d \choose k}^2} \geq \frac{a_{k-1} a_{k+1}}{{ d \choose k-1}{d \choose k+1}} \textnormal{ for all $0<k<d$}.$$
\end{enumerate}
\end{thm}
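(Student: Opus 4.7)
The plan is to establish both parts by reducing the Lorentzian signature condition to an analytic condition on the Hessian of $\log f$ and then unpacking it at the level of coefficients. For part (1), I would first prove the following pivotal lemma: a homogeneous polynomial $g$ with nonnegative coefficients satisfying $g(x)>0$ on $\RR^n_{>0}$ has $\log g$ concave on $\RR^n_{>0}$ if and only if the Hessian $H(g)(x)$ has at most one positive eigenvalue for every $x \in \RR^n_{>0}$. The proof starts from the identity $\nabla^2 \log g = \frac{1}{g}H(g) - \frac{1}{g^2}(\nabla g)(\nabla g)^\top$, and uses Euler's identity $x^\top \nabla g(x) = (\deg g)\, g(x)$, which shows that $x$ is itself a direction of positive curvature of $H(g)$ and thereby pins down the single positive eigenvalue that is admissible. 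Subtracting the rank-one tensor $(\nabla g)(\nabla g)^\top$ and invoking Cauchy interlacing then converts the Hessian-signature condition into the negative semidefiniteness of $\nabla^2 \log g$.

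For the forward direction of part (1), I would then apply this lemma to every partial derivative $g$ of $f$, using that the class of polynomials whose Hessian has at most one positive eigenvalue at every point of $\RR^n_{>0}$ is closed under coordinate partial differentiation: this follows from the definition of strictly Lorentzian polynomials (where closure is by design) together with the limit description of Lorentzian polynomials. For the reverse direction, strong log-concavity applied to every $(d-2)$-nd order partial derivative gives the Hessian signature condition directly, while M-convexity of the support is extracted by examining how $\log g$ must fail to be concave along coordinate rays if the exponent set contains a hole.

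For part (2), the plan is direct computation. The $(d-2)$-nd derivatives of $f = \sum_k a_k x_1^k x_2^{d-k}$ are binary quadratic forms whose three coefficients are determined by three consecutive $a_k, a_{k+1}, a_{k+2}$ times falling-factorial prefactors. The Lorentzian signature condition for a binary quadratic is exactly that the discriminant of its Hessian is nonpositive; after clearing the factorial bookkeeping and regrouping by $\binom{d}{k}$, this becomes precisely $a_k^2 / \binom{d}{k}^2 \geq a_{k-1}a_{k+1} / \bigl(\binom{d}{k-1}\binom{d}{k+1}\bigr)$ at every interior $k$. Independently, M-convex subsets of $\ZZ_{\geq 0}^2$ of fixed coordinate sum are precisely the contiguous intervals along the corresponding simplex, matching the \emph{no internal zeros} condition.

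The main obstacle will be the pivotal lemma in the first paragraph: translating the eigenvalue structure of $H(g)$ into the negative semidefiniteness of $\nabla^2 \log g$ via a rank-one perturbation is delicate at boundary points of $\RR^n_{\geq 0}$ where $g$ may vanish, and care is needed when passing from strictly Lorentzian to Lorentzian in the limit. This is the technical heart of the characterization, and I expect the handling of the boundary behavior, together with the control it imposes on the support via M-convexity, to be the subtlest step of the argument.
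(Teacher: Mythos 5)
The paper does not prove this theorem; both parts are stated as citations to Br\"and\'en--Huh \cite{BH19} (Theorem 5.3 and Example 5.2 respectively), so there is no internal proof to compare against --- the correct comparison is with the argument in \cite{BH19}, which your proposal partially reconstructs.

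Your pivotal lemma is essentially correct and is indeed the right tool: it amounts to the rank-one perturbation lemma \cite[Lemma 2.9]{BH19} combined with Euler's identity. Using $H(g)x = (\deg g - 1)\nabla g$, one writes $\nabla^2\log g = \tfrac{1}{g}H(g) - \tfrac{1}{g^2}(\nabla g)(\nabla g)^\top$, observes $x^\top H(g)x = m(m-1)g > 0$ where $m=\deg g$, and applies \cite[Lemma 2.9]{BH19} to get $H(g) \preceq \tfrac{m-1}{m}\tfrac{1}{g}(\nabla g)(\nabla g)^\top \preceq \tfrac{1}{g}(\nabla g)(\nabla g)^\top$, hence $\nabla^2\log g \preceq 0$; the converse via restriction to $\{w : v^\top H(g)w = 0\}$ is fine. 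Your part (2) computation is also correct and agrees with a direct calculation: applying the pivotal lemma's eigenvalue condition to the binary quadratics $\partial_{x}^{k-1}\partial_y^{d-k-1}f$ and clearing factorials reproduces $a_k^2/\binom{d}{k}^2 \ge a_{k-1}a_{k+1}/\bigl(\binom{d}{k-1}\binom{d}{k+1}\bigr)$, and the M-convex support condition in two variables is exactly ``no internal zeros.'' (One slip: the condition you want is that the \emph{determinant of the Hessian} is nonpositive, equivalently the discriminant $b^2-4ac$ of the quadratic is nonnegative, not ``nonpositive''; the phrase ``discriminant of its Hessian is nonpositive'' conflates the two but your conclusion is the intended one.)

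There are, however, two genuine gaps in part (1) that hide most of the content of \cite[Theorem 5.3]{BH19}. First, in the forward direction you assert that every partial derivative of every order of a Lorentzian $f$ has Hessian with at most one positive eigenvalue on $\RR^n_{>0}$, claiming this is ``closure by design.'' But the definition of strictly Lorentzian only constrains the $(d-2)$nd order derivatives; propagating the eigenvalue condition to lower-order derivatives and then to the pointwise Hessian $H(\partial^\gamma f)(x)$ at interior points requires work. In particular, $H(\partial^\gamma f)(x)$ is a nonnegative combination $\sum_{|\alpha|=m-2}\tfrac{x^\alpha}{\alpha!}H(\partial^{\gamma+\alpha}f)$ of constant Hessians each having at most one positive eigenvalue --- but the cone of symmetric matrices with at most one positive eigenvalue is not convex, so this is not automatic; it is here that the M-convexity of the support enters the argument in \cite{BH19}. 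Second, and more seriously, in the reverse direction you propose to extract M-convexity of the support ``by examining how $\log g$ must fail to be concave along coordinate rays if the exponent set contains a hole.'' M-convexity is an exchange axiom ($\alpha,\beta\in J$, $\alpha_i > \beta_i \Rightarrow \exists j: \alpha_j<\beta_j$ and $\alpha-e_i+e_j\in J$), not a one-dimensional contiguity condition, and there is no direct reduction to coordinate rays in $n\ge 3$ variables. Establishing that strong log-concavity forces M-convex support is precisely the delicate part of \cite[Theorem 5.3]{BH19} and is proved there by a separate structural argument (via indecomposability and induction on degree), not by the sketch you describe. As written, the reverse implication in your part (1) would not close.
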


We remark that (strictly) Lorentzian polynomials arise in classical algebraic geometry whenever one has a set of nef (ample) divisors on a smooth projective variety.  

\begin{rem}\label{rem:geomlorentzian}
Let $\{D_1, \ldots, D_s\}$ be nef (ample) divisors on a smooth projective $\mathbbm k$-variety $X$ of dimension $d$, and $A(X)$ its Chow ring.  Let $\int_X: A^d(X) \to \RR$ be the degree map obtained as the pushforward map along the structure map $X \to \operatorname{Spec} \mathbbm k$.  Then
$$\operatorname{vol}_X\big(\sum_{i=1}^s t_iD_i \big) := \lim_{q \to \infty} \frac{\dim_{\mathbbm k} H^0(q\sum_i t_iD_i)}{q^d/d!} = \int_X \big(\sum_i t_iD_i \big)^d$$
is a (strictly) Lorentzian polynomial \cite[Corollary 1.6.3.(iii)]{Laz04} or \cite[Theorem 10.1]{BH20}.
\end{rem}

\subsection{The \DHRtext formula} 
We prepare our formula for the volume polynomial of $\myChow^\bullet(M)$ by describing the combinatorial notions in \cite{Pos09} that we generalize to arbitrary matroids.

\medskip
We first recall Hall's marriage theorem and Rado's generalization; for proofs we point to \cite[\S11.2]{Oxl11}.  
Let $E = [n] = \{0,1,\ldots, n\}$.  A \textbf{transversal} of a collection $\{A_0, \ldots, A_m\}$ (repetitions allowed) of subsets of $E$ is a subset $I \subseteq E$ such that there exists a bijection $\phi: \{A_0, \ldots, A_m\} \to I$ satisfying $\phi(A_i) \in A_i$ for all $0 \leq i \leq m$.


\medskip
The well-known Hall's marriage theorem \cite{Hal35} states that a transversal of $\{A_0, \ldots, A_n\}$ exists if and only if  $\left| \bigcup_{j \in J} A_i \right| \geq |J|$ for all $J \subseteq [n]$.  The following theorem of Rado gives a matroid generalization of the condition given in Hall's theorem.

\begin{thm}[Rado's theorem  \cite{Rad42}] \label{thm:HallRado}
        Let $M$ be a matroid on $E$. A family of subsets $\{A_0, \ldots, A_m\}$ of $E$ has a transversal $I \subseteq E$ that is independent in $M$ if and only if
        \[
                \rk_M \Big( \bigcup_{j \in J} A_j  \Big) \geq |J|, \quad \forall J \subseteq [m].
        \]
\end{thm}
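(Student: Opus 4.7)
The plan is to treat necessity and sufficiency separately. Necessity is immediate: if a bijection $\phi : \{A_0,\ldots,A_m\} \to I$ witnesses an independent transversal, then for any $J \subseteq [m]$ the set $\phi(\{A_j : j \in J\})$ is an independent subset of $\bigcup_{j \in J} A_j$ of cardinality $|J|$, so $\rk_M(\bigcup_{j \in J} A_j) \geq |J|$.

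For sufficiency, I would induct on $\sum_i |A_i|$. In the base case all $A_i$ are singletons $\{a_i\}$, and applying the Rado condition with $J = [m]$ gives $\rk_M(\{a_0,\ldots,a_m\}) \geq m+1$, so the $a_i$ are distinct and jointly independent in $M$, yielding an independent transversal. For the inductive step, assume $|A_0| \geq 2$. The strategy is to find some $x \in A_0$ such that the reduced family $\{A_0 \setminus \{x\}, A_1, \ldots, A_m\}$ still satisfies the Rado condition; the inductive hypothesis then delivers a transversal that simultaneously serves the original family.

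The main obstacle is producing such an $x$. Suppose toward a contradiction that for every $x \in A_0$ there is a ``violator'' subfamily $J_x \subseteq [m]$ with $0 \in J_x$ and $\rk_M(B_x \setminus \{x\}) < |J_x|$, where $B_x := \bigcup_{j \in J_x} A_j$. Pick distinct $x,y \in A_0$, and choose the corresponding $J_x, J_y$ to be minimal in size among their respective violators. Apply submodularity of $\rk_M$ to the pair $B_x \setminus \{x\}$ and $B_y \setminus \{y\}$: since $x, y \in A_0 \subseteq B_x \cap B_y$, their union is exactly $B_x \cup B_y$, and their intersection is $(B_x \cap B_y) \setminus \{x, y\}$, which contains $\bigcup_{j \in J_x \cap J_y} A_j$ with at most two elements removed. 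Combining the submodular inequality with the Rado conditions applied to $J_x \cup J_y$ and $J_x \cap J_y$ yields the desired contradiction; the minimality condition on $J_x$ is what rules out the borderline case where plain submodularity gives only equality rather than a strict contradiction.

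An alternative route I would consider, which sidesteps this delicate submodularity bookkeeping, is to define the partial transversal matroid $T$ on $\{A_0,\ldots,A_m\}$ (whose independent sets are subfamilies admitting a transversal) and apply Edmonds' matroid union formula to $M \wedge T$: the existence of an independent transversal is equivalent to $\rk_{M \wedge T}$ attaining the value $m+1$, and the Rado inequality emerges directly from the rank formula. This trades the submodularity argument for the separate task of proving that partial transversals form the independent sets of a matroid.
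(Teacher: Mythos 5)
The paper does not prove Rado's theorem: it is quoted as a classical result, with \cite{Rad42} for the original and a pointer to \cite[Theorem 11.2.2]{Oxl11} for a proof. So there is no ``paper's proof'' to compare against; what you have written is the standard textbook argument (induction on $\sum_i |A_i|$ plus a submodularity step), which is morally the one in Oxley.

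Your necessity direction and base case are fine. In the inductive step, though, the bookkeeping is off in two places. First, writing the violation as $\rk_M(B_x\setminus\{x\}) < |J_x|$ silently assumes that $x\notin A_j$ for every $j\in J_x\setminus\{0\}$; this is in fact forced (if $x$ were in some such $A_j$, then removing $x$ from $A_0$ would not change $\bigcup_{j\in J_x}A_j$, and you would already contradict the original Rado hypothesis), but it needs to be said, and the same for $y$. Second, and more seriously, your lower bound on the intersection is too weak. You say $(B_x\cap B_y)\setminus\{x,y\}$ ``contains $\bigcup_{j\in J_x\cap J_y}A_j$ with at most two elements removed'', which only yields $\rk_M\bigl((B_x\cap B_y)\setminus\{x,y\}\bigr)\geq |J_x\cap J_y|-2$; feeding that into submodularity gives $|J_x|+|J_y|-2 \geq |J_x|+|J_y|-2$, i.e.\ no contradiction at all. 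The trick you are missing is that, by the observation above, $x$ and $y$ lie in none of the $A_j$ with $j\in(J_x\cap J_y)\setminus\{0\}$, so $(B_x\cap B_y)\setminus\{x,y\}$ actually contains the \emph{entire} union $\bigcup_{j\in(J_x\cap J_y)\setminus\{0\}}A_j$, hence has rank at least $|J_x\cap J_y|-1$. Then submodularity plus the Rado bound $\rk_M(B_x\cup B_y)\geq|J_x\cup J_y|$ gives $|J_x|+|J_y|-2 \geq |J_x\cup J_y|+|J_x\cap J_y|-1 = |J_x|+|J_y|-1$, a genuine contradiction. Once this is fixed, the minimality of $J_x$ and $J_y$ plays no role and should be dropped; it is not what rescues the argument. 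Your alternative route via the transversal matroid and matroid union/intersection is a legitimate alternative, at the cost of proving that partial transversals form a matroid.
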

Hall's condition can be recovered from Rado's by setting $M = U_{|E|,E}$ and $m = n$.  See \cite[Theorem 11.2.2]{Oxl11} for more information and a proof of Rado's theorem.  The following variant of Hall's marriage theorem was investigated by Postnikov as a combinatorial interpretation of a formula for volumes of generalized permutohedra \cite[\S5, \S9]{Pos09}.

\begin{prop}[Dragon marriage condition]
Let $\{A_1, \ldots, A_n\}$ be a collection of subsets of $E = \{0,1,\ldots, n\}$.  There is a transversal $I \subseteq E \setminus \{e\}$ of $\{A_1, \ldots, A_{n}\}$ for every $e \in E$ if and only if
        \[
            \Big| \bigcup_{j \in J} A_j \Big| \geq |J| + 1, \quad \forall \emptyset \subsetneq J \subseteq \{1,2,\ldots, n\}.
        \]
\end{prop}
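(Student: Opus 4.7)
The plan is to reduce both directions of the equivalence to the classical Hall marriage theorem, since the ``dragon'' version differs only by the requirement that the transversal omit a prescribed element.

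For the forward direction, suppose that for every $e \in E$ there exists a transversal $I_e \subseteq E \setminus \{e\}$ of $\{A_1, \ldots, A_n\}$, witnessed by bijections $\phi_e$. Fix a nonempty $J \subseteq \{1, \ldots, n\}$. I would first observe that the union $\bigcup_{j \in J} A_j$ must be nonempty: if some $A_j = \emptyset$ then no transversal of the full family exists, so in particular $\bigcup_{j\in J}A_j \neq \emptyset$. Pick any element $e \in \bigcup_{j \in J} A_j$, and consider the restricted assignment $\phi_e|_J$. Its image is a set of $|J|$ distinct elements lying in $\bigcup_{j \in J} A_j$, and none of them equals $e$ by construction of $I_e$. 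Hence $\bigl|\bigcup_{j \in J} A_j\bigr| \geq |J| + 1$.

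For the backward direction, assume the dragon inequality. Fix $e \in E$ and consider the auxiliary collection $\{A_j \setminus \{e\}\}_{j=1}^n$. For any nonempty $J \subseteq \{1, \ldots, n\}$ I estimate
\[
\Bigl|\bigcup_{j \in J} (A_j \setminus \{e\})\Bigr| \;\geq\; \Bigl|\bigcup_{j \in J} A_j\Bigr| - 1 \;\geq\; (|J| + 1) - 1 \;=\; |J|,
\]
so the modified family satisfies Hall's marriage condition. By Hall's theorem (the $M = U_{|E|,E}$ case of Theorem \ref{thm:HallRado}), it admits a transversal $I \subseteq E$, which automatically avoids $e$ and is also a transversal of the original family $\{A_1, \ldots, A_n\}$.

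This argument is essentially a one-line reduction, so the main ``obstacle'' is purely bookkeeping: verifying that the Hall inequalities transfer cleanly after deleting a single element from each $A_j$, and handling the edge case $J = \emptyset$ (trivial) and the possibility that $\bigcup_{j \in J} A_j$ is empty (ruled out by the existence of any transversal). There is no deeper induction, matroid, or convex geometry needed beyond Hall's theorem itself; the dragon condition is simply the Hall condition for every simultaneous ``puncture'' of $E$ at a single point.
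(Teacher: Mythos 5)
Your proof is correct and uses essentially the same reduction as the paper: the backward direction is exactly the paper's key observation (transversals of $\{A_1,\ldots,A_n\}$ avoiding $e$ are the same as transversals of the punctured family $\{A_1\setminus\{e\},\ldots,A_n\setminus\{e\}\}$, whose Hall condition follows by subtracting $1$ from the dragon inequality), and your forward direction just replaces a second invocation of Hall on the punctured family with a direct count of the $|J|$ values $\phi_e(A_j)$, which is an equally valid and slightly more explicit route to the same bound.
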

The dragon marriage theorem above follows easily from the original Hall's marriage theorem, and conversely, one can obtain Hall's marriage theorem from the dragon marriage theorem as follows: given $A_0, \ldots, A_n \subseteq E$ as in Hall's theorem, set $E' = E \sqcup \{*\}$ and $A_i' := A_0 \sqcup \{*\}$ for each $0 \leq i \leq n$ and apply Postnikov's theorem to $\{A_0', \ldots, A_n'\}$.

\medskip
We now consider a variant of Rado's theorem in the same spirit.

\begin{prop}[\DHRtext condition]\label{prop:DHRmatching}
        Let $M$ be a matroid on $E$, and $\{A_1, \ldots, A_m\}$ a collection of subsets of $E$.
        There is for every non-loop $e \in E$ an independent transversal $I\subseteq E\setminus\{e\}$ of $\{A_1, \ldots, A_m\}$ such that $I \cup e$ is independent if and only if 
        \[
                \rk_M\Big( \bigcup_{j \in J} A_j \Big) \geq |J| + 1, \quad \forall \emptyset \subsetneq J \subseteq \{1, \ldots, m\}
        \]
        and when this condition is satisfied, we say that $\{A_1, \ldots, A_m\}$ satisfy the \textbf{dragon Hall-Rado condition of $M$}, or $\DHR(M)$ for short.
\end{prop}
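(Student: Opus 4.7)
The proof reduces to Rado's theorem (\Cref{thm:HallRado}) applied to suitably modified collections of subsets.

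For the sufficient direction ($\Leftarrow$), I would fix an arbitrary $e \in E$ and pass to the collection $\{A_1 \setminus e, \ldots, A_m \setminus e\}$. Since deleting a single element from a set decreases its matroid rank by at most one, the hypothesis yields
\[
\operatorname{rk}_M\!\Big(\bigcup_{j \in J}(A_j \setminus e)\Big) \;\geq\; \operatorname{rk}_M\!\Big(\bigcup_{j \in J} A_j\Big) - 1 \;\geq\; |J|
\]
for every nonempty $J \subseteq \{1, \ldots, m\}$. This is exactly the hypothesis of Rado's theorem, so there is an independent transversal $I$ of $\{A_j \setminus e\}_{j=1}^m$ in $M$. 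Such an $I$ is contained in $\bigcup_j (A_j \setminus e) \subseteq E \setminus \{e\}$ and is simultaneously a transversal of $\{A_1, \ldots, A_m\}$ avoiding $e$, as desired.

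For the necessary direction ($\Rightarrow$), I would argue by contrapositive. Suppose the rank condition fails, and choose $J_0 \subseteq \{1, \ldots, m\}$ minimal (with respect to $|J_0|$) among nonempty subsets with $\operatorname{rk}_M(\bigcup_{j\in J_0} A_j) \leq |J_0|$. Set $S = \bigcup_{j\in J_0} A_j$. If $\operatorname{rk}_M(S) < |J_0|$, then Rado's theorem already forbids any independent transversal of $\{A_j\}_{j\in J_0}$, hence of the full collection, and no $e$ admits one. Otherwise $\operatorname{rk}_M(S) = |J_0|$, so any independent transversal of $\{A_j\}_{j\in J_0}$ has size exactly $|J_0|$ and therefore forms a basis of $M|_S$. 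The remaining step is to identify an element $e \in S$ that lies in every such basis-transversal. Because $J_0$ was chosen minimal, the collection $\{A_j \cap S\}_{j\in J_0}$ is rank-tight in the sense that no proper subfamily satisfies a strict rank inequality, forcing every system of distinct representatives in $M|_S$ to use such a distinguished $e$; then no independent transversal of the full collection can avoid $e$.

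The main obstacle is locating the forced element $e \in S$ in the equality case $\operatorname{rk}_M(S) = |J_0|$. The ``$\Leftarrow$'' direction is a routine Rado argument, but the converse requires a careful matroid-theoretic analysis—presumably a coloop/circuit-exchange argument in the transversal matroid on $\{A_j \cap S\}_{j\in J_0}$—to exhibit an element common to every basis obtained as an independent transversal. Once this element is produced, the contrapositive is complete.
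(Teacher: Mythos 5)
Your $\Leftarrow$ direction is exactly the paper's proof: pass to $\{A_j\setminus\{e\}\}$, observe that removing a single element drops rank by at most one, and apply Rado's theorem. For the $\Rightarrow$ direction, you have correctly identified the crux as ``locating the forced element $e\in S$'' in the equality case $\operatorname{rk}_M(S)=|J_0|$, and your unease is justified: such an element need not exist. An element that lies in every basis of $M|_S$ is precisely a coloop of $M|_S$, and $M|_S$ may be coloop-free, in which case the proposed contrapositive cannot produce the required $e$. In fact the implication itself breaks: take $M = U_{1,E}$ on $E=\{1,2,3\}$ and the single subset $A_1 = E$. For every $e\in E$ any singleton $\{a\}$ with $a\in E\setminus\{e\}$ is an independent transversal of $\{A_1\}$ avoiding $e$, yet $\operatorname{rk}_M(A_1)=1<2$.

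The paper's own one-line proof has the same gap. After rewriting ``independent transversal of $\{A_j\}$ inside $E\setminus\{e\}$'' as ``independent transversal of $\{A_j\setminus\{e\}\}$'' and invoking Rado's theorem for each $e$, the left-hand condition becomes $\forall e\in E\ \forall J\neq\emptyset,\ \operatorname{rk}_M\bigl(\bigl(\bigcup_{j\in J}A_j\bigr)\setminus\{e\}\bigr)\geq|J|$. Upgrading this to $\operatorname{rk}_M\bigl(\bigcup_{j\in J}A_j\bigr)\geq|J|+1$ requires some $e\in E$ to be a coloop of $M$ restricted to $\bigcup_{j\in J}A_j$, which is a genuine extra hypothesis (automatic for the Boolean matroid in Postnikov's dragon marriage setting, but not for general $M$). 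So you have not missed a slick coloop or circuit-exchange trick; the $\Rightarrow$ direction of the proposition is false as stated, and neither your sketch nor the paper's establishes it. Only the $\Leftarrow$ implication holds in full generality, and fortunately the rest of the paper relies on the rank inequalities themselves as the \emph{definition} of $\DHR(M)$, not on this transversal characterization.
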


\begin{proof}
If $e \in E$ is not a loop, then the dragon Hall-Rado condition for a collection $\{A_1, \ldots, A_m\}$ is equivalent to Rado's condition in Theorem \ref{thm:HallRado} for the collection $\{\{e\}, A_1 \setminus \{e\}, \ldots, A_m \setminus \{e\}\}$.
Independent transversals $I'$ of the latter collection correspond bijectively via $I' \longleftrightarrow I = I' \setminus e$ to independent transversals $I \subset E \setminus \{e\}$ of the former collection such that $I \cup \{e\}$ is independent.
\end{proof}

The proof of the dragon Hall-Rado condition demonstrates that Rado's theorem is equivalent to the dragon Hall-Rado theorem by an argument analogous to how Hall's marriage theorem is obtained from dragon marriage theorem.
In summary, the combinatorics introduced in this subsection thus far can be schematically laid out as follows with the indicated logical implications:
\begin{center}
        \begin{tikzcd}
                \textrm{Hall's marriage theorem} \ar[r, Leftarrow] \ar[d, Leftrightarrow] & \textrm{Rado's theorem} \ar[d, Leftrightarrow] \\
                \textrm{Dragon marriage theorem} \ar[r, Leftarrow] & \textrm{Dragon Hall-Rado theorem}
        \end{tikzcd}
\end{center}

We are now ready to compute the intersection numbers of the variables $h_F$ in the simplicial presentation of the Chow ring of a matroid $M$.

\begin{thm}
\label{thm:dhr}
Let $A_1, \ldots, A_d$ be a collection of subsets of $E$, and $M$ a loopless matroid on $E$ of rank $d+1$.  Let $H_{A_1}, \ldots, H_{A_d}$ be matroids as defined in Proposition \ref{lem:multquot}.  Then 
$$M\wedge H_{A_1} \wedge \cdots \wedge H_{A_d} = U_{1,E} \iff  \{A_1, \ldots, A_d\} \textnormal{ satisfies $\DHR(M)$}.$$
Thus, we have 
\[ \int_M h_{A_1}(M) \cdots h_{A_d}(M) = \begin{cases} 1& \textrm{if  $\{A_1, \ldots, A_d\}$ satisfies $\DHR(M)$} \\ 0& \textrm{otherwise.} \end{cases} \]
\end{thm}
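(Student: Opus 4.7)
My plan is to first establish the combinatorial equivalence $M\wedge H_{A_1}\wedge\cdots\wedge H_{A_d}=U_{1,E}$ if and only if $\{A_1,\dots,A_d\}$ satisfies $\DHR(M)$, and then deduce the integral formula. For the integral, I will iterate the calculation underlying Theorem \ref{thm:hyper}: by functoriality of the cap product (Remark \ref{rem:functorial}) together with the identification $\delta_{\Sigma_{A_n}}(h_{A_i})=\Delta_{H_{A_i}}$ from Lemma \ref{lem:hyper}, the product $h_{A_1}(M)\cdots h_{A_d}(M)\cap\Delta_M$ coincides with the iterated stable intersection $\Delta_{H_{A_1}}\cap_{st}\cdots\cap_{st}\Delta_{H_{A_d}}\cap_{st}\Delta_M$ as a Minkowski weight on $\Sigma_{A_n}$. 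Repeated application of Proposition \ref{prop:matprod} identifies this weight with $\Delta_{M\wedge H_{A_1}\wedge\cdots\wedge H_{A_d}}$ when the matroid intersection is loopless and with $0$ otherwise. A dimension count on $\operatorname{MW}_\bullet$ forces the loopless case here to have rank $1$, hence to equal $U_{1,E}$, whose $0$-dimensional Bergman class takes value $1$ at the origin; this yields the stated integral values.

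To prove the combinatorial equivalence, I will unravel the matroid intersection directly. Since the spanning sets of $H_{A_i}$ are $\{E\}\cup\{E\setminus a:a\in A_i\}$, the spanning sets of $H_{A_1}\wedge\cdots\wedge H_{A_d}$ are exactly the sets $E\setminus T$ where $T$ is a partial transversal of some subfamily of $\{A_1,\dots,A_d\}$. Consequently, $\{e\}$ is a spanning set of $M\wedge H_{A_1}\wedge\cdots\wedge H_{A_d}$ if and only if there exist $S\in\mathcal{S}(M)$ and such a $T$ with $S\setminus T=\{e\}$; comparing $|S|\geq d+1$ with $|T|\leq d$ forces $T$ to be a \emph{full} transversal of $\{A_1,\dots,A_d\}$ disjoint from $e$, with $S=T\cup\{e\}$ a basis of $M$. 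Therefore the matroid intersection equals $U_{1,E}$ if and only if, for every $e\in E$, the family $\{A_1\setminus e,\dots,A_d\setminus e\}$ admits an independent transversal in the contraction $M/e$.

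The final step applies Rado's theorem (Theorem \ref{thm:HallRado}) to the condition above: it is equivalent to $\operatorname{rk}_{M/e}\big(\bigcup_{j\in J}(A_j\setminus e)\big)\geq|J|$ for all $e\in E$ and non-empty $J\subseteq\{1,\dots,d\}$, which via $\operatorname{rk}_{M/e}(X)=\operatorname{rk}_M(X\cup\{e\})-1$ (valid because $M$ is loopless) rewrites as $\operatorname{rk}_M\big((\bigcup_{j\in J}A_j)\cup\{e\}\big)\geq|J|+1$ for all $e,J$. The implication $\DHR(M)\Rightarrow$ this condition is immediate since adding an element cannot lower rank; conversely, for any non-empty $J$, choosing $e\in\bigcup_{j\in J}A_j$ makes the condition read $\operatorname{rk}_M(\bigcup_{j\in J}A_j)\geq|J|+1$, which is precisely $\DHR(M)$ (the degenerate case $\bigcup_{j\in J}A_j=\emptyset$ forces both sides of the equivalence to fail trivially). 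The principal technical obstacle will be the size-counting step that translates spanning singletons into full independent transversals; the remainder is a clean deployment of Rado's theorem and the contraction rank formula.
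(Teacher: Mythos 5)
Your proof is correct, but follows a genuinely different route from the paper's. The paper proves the equivalence by rank arithmetic under iterated principal truncations: for $(\Rightarrow)$ it observes that a failure of $\DHR$ on a subfamily $J$ forces the rank of $\bigcup_{j\in J}A_j$ down to $1$ after $|J|-1$ truncations, producing a loop at the next step; for $(\Leftarrow)$ it inducts on $d$, showing that if $\{A_1,\dots,A_d\}$ satisfies $\DHR(M)$ then $\{A_1,\dots,A_{d-1}\}$ satisfies $\DHR(M\wedge H_{A_d})$. You instead argue non-inductively: you characterize the spanning sets of $H_{A_1}\wedge\cdots\wedge H_{A_d}$ as complements of partial transversals, reduce $M\wedge H_{A_1}\wedge\cdots\wedge H_{A_d}=U_{1,E}$ (via the size count $|S|\geq d+1$, $|T|\leq d$) to the existence, for each $e\in E$, of an independent transversal of $\{A_1\setminus e,\dots,A_d\setminus e\}$ in $M/e$, and then invoke Rado's theorem plus the contraction rank formula to turn this into $\DHR(M)$. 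This is a clean alternative that makes Rado's theorem an actual ingredient of the proof rather than only motivation for the definition of $\DHR$, and it arguably better explains \emph{why} $\DHR$ is the correct condition. Your handling of the integral formula (reduction to iterated stable intersections, the observation that looplessness is preserved backwards along quotients, and the rank count forcing $U_{1,E}$) matches the paper's strategy, with the extra step correctly noting that an intermediate loop would already make the stable intersection vanish. One small point to make explicit when writing this up: the $A_i$ are nonempty (this is built into the standing notation for $H_S$), so for nonempty $J$ the set $\bigcup_{j\in J}A_j$ is nonempty and the choice of $e$ in your converse direction is always available; the ``degenerate case'' you flag never actually arises.
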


\begin{proof}
For the first assertion, we begin by making two easy observations.
\begin{enumerate}
\item $M\wedge H_S$ has a loop if and only if $\operatorname{rk}_M(S) = 1$, and 
\item \cite[Exercise 7.3.10]{Oxl11} for the elementary quotient $f: M \twoheadrightarrow M\wedge H_S$ we have by Proposition \ref{prop:quotbases} that $\{T\subseteq E \ | \ n_f(T) = 1\} =  \{T \subseteq E \ | \ \operatorname{cl}_M(T) \supseteq S\}.$  In particular, the rank of $T\subseteq E$ decreases by 1 in the elementary quotient if $T\supseteq S$.
\end{enumerate}

For the $\implies$ direction, suppose $\operatorname{rk}_M(\bigcup_{j\in J} A_j) \leq k$ for a nonempty $J = \{j_1, \ldots, j_k\} \subseteq \{1, \ldots, d\}$.  Then, for $\widetilde M := M\wedge H_{A_{j_1}} \wedge \cdots \wedge H_{A_{j_{k-1}}}$ we have $\operatorname{rk}_{\widetilde M}(\bigcup_{j\in J} A_j) \leq k - (k-1) = 1$, so that $M \bigwedge_{j\in J} H_{A_j}$ already has a loop.

For the $\impliedby$ direction, we induct on $d$.  The base case $d=1$ is easily verified.  Now, we claim that if $\{A_1, \ldots, A_d\}$ satisfies the dragon Hall-Rado condition for $M$, then so does $\{A_1, \ldots, A_{d-1}\}$ for $\widetilde M := M \wedge H_{A_d}$.  For the sake of contradiction, suppose $\{A_1, \ldots, A_{d-1}\}$ fails $\DHR(\widetilde M)$.  That is, without loss of generality suppose that $\operatorname{rk}_{\widetilde M}(A_1 \cup \cdots \cup A_k) \leq k$ for some $1\leq k \leq d-1$.  Then, we must have had $\operatorname{rk}_M(A_1\cup \cdots \cup A_k) = k+1$ with $\operatorname{cl}_M(A_1 \cup \cdots \cup A_k) \supseteq A_d$.  But then $\operatorname{rk}_M(A_1\cup \cdots \cup A_k \cup A_d) = k+1$, violating $\DHR(M)$.

For the second assertion, we first note that $\int_M h_{A_1}(M) \cdots h_{A_d}(M) = \int_{\Sigma_{A_n}} h_{A_1} \cdots h_{A_d} \cap \Delta_M$.  Repeated application of \Cref{thm:hyper} implies that $h_{A_1} \cdots h_{A_d} \cap \Delta_M$ is either 0 or the Bergman class $\Delta_{M'}$ of a loopless matroid $M'$ of rank 1, in which case $M'$ moreover equals $M\wedge H_{A_1} \wedge \cdots \wedge H_{A_d}$.  Since $U_{1,E}$ is the unique loopless matroid  of rank 1 on $E$, which defines the Bergman class $\Delta_{U_{1,E}}$ by $\Delta_{U_{1,E}}(\mathbf 0) = 1$ where $\mathbf 0$ is the zero-dimensional cone of $\Sigma_{A_n}$, we have that $\int_{\Sigma_{A_n}} h_{A_1} \cdots h_{A_d} \cap \Delta_M$ equals 1 if $\{A_1, \ldots, A_d\}$ satisfies $\DHR(M)$ and equals 0 otherwise.
\end{proof}

We obtain as an immediate corollary the promised generalization of \cite[Corollary 9.4]{Pos09}.  Recall that $\mathscr L_M^{\geq 2}$ denotes the flats of $M$ of rank at least two.

\begin{cor}\label{cor:volpolH}
Let $M$ be a loopless matroid on $E$ of rank $d+1$.  The volume polynomial $VP_M^\nabla(\underline t)\in \QQ[t_F \ | \ F\in \mathscr L_M^{\geq 2}]$ of $\myChow^\bullet(M)$ is
$$VP_M^\nabla(\underline t) = \sum_{(F_1, \ldots, F_d)} t_{F_1}\cdots t_{F_d}$$
where the sum is over ordered collections of nonempty flats $F_1, \ldots, F_d$ of $M$ satisfying $\DHR(M)$.  Alternatively, we have
$$VP_M^\nabla(\underline t) = \sum_{\{F_1^{d_1}, \ldots, F_k^{d_k}\}} {d \choose d_1, \ldots, d_k} t_{F_1}^{d_1}\cdots t_{F_k}^{d_k}$$
where the sum is over size $d$ multisets $\{F_1^{d_1}, \ldots, F_k^{d_k}\}$ of flats of $M$ satisfying $\DHR(M)$.
\end{cor}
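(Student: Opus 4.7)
The plan is to expand the defining expression
\[
VP_M^\nabla(\underline t) = \int_M \Big(\sum_F t_F\, h_F\Big)^d
\]
via the multinomial theorem and read off the coefficients using Theorem \ref{thm:dhr}. First I will expand, treating the inner sum as a finite linear combination of simplicial generators indexed by nonempty flats of $M$: viewing the expansion as an ordered sum gives
\[
\Big(\sum_F t_F\, h_F\Big)^d \;=\; \sum_{(F_1,\ldots,F_d)} t_{F_1}\cdots t_{F_d}\, h_{F_1}\cdots h_{F_d},
\]
where the outer sum runs over ordered $d$-tuples of nonempty flats. Applying $\int_M$ term-by-term (using linearity of the degree map, and the fact that $t_{F_1}\cdots t_{F_d}$ is a scalar from the perspective of $A^\bullet(M)$) and invoking Theorem \ref{thm:dhr} to compute each $\int_M h_{F_1}\cdots h_{F_d}$ immediately gives the first displayed formula, since the factor is $1$ exactly when the DHR($M$) condition holds for $(F_1,\ldots,F_d)$ and $0$ otherwise. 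It is worth remarking at this point that any tuple containing an atom $a$ contributes zero, because taking $J=\{i\}$ with $F_i=a$ gives $\rk_M(F_i)=1<|J|+1$, so only flats of rank $\geq 2$ appear with nonzero coefficient and $VP_M^\nabla$ naturally lies in $\QQ[t_F : F\in \mathscr L_M^{\geq 2}]$.

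To obtain the second, multiset form, I will regroup the ordered $d$-tuples by their underlying multiset. The key point to verify is that the DHR($M$) condition is invariant under reordering of a tuple, which is immediate from the fact that both $\bigcup_{j\in J} F_j$ and $|J|$ are symmetric functions of the tuple's entries (permuting $(F_1,\ldots,F_d)$ by $\sigma$ transports the condition for $J$ to the condition for $\sigma^{-1}(J)$). Therefore DHR descends to a well-defined property of the multiset $\{F_1^{d_1},\ldots,F_k^{d_k}\}$, and each such multiset arises from exactly $\binom{d}{d_1,\ldots,d_k}$ ordered $d$-tuples, yielding the multinomial coefficient in the stated formula.

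There is no genuine obstacle here: once Theorem \ref{thm:dhr} is in hand, the corollary is a direct bookkeeping consequence of multinomial expansion and the symmetry of the DHR condition under reordering. The specialization to the Boolean matroid $M=U_{|E|,E}$ recovers \cite[Corollary 9.4]{Pos09} on volumes of generalized permutohedra, since in that case the flats are all subsets of $E$ and the rank function is cardinality, so DHR reduces to Postnikov's dragon marriage condition.
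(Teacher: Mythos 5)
Your proposal is correct and follows essentially the same route the paper has in mind: the paper itself presents this corollary as an immediate consequence of Theorem~\ref{thm:dhr}, and your multinomial expansion followed by term-by-term application of the degree formula is precisely the intended argument. The two additional checks you spell out — that tuples containing a rank-1 flat vanish (reconciling ``nonempty flats'' in the index with the codomain $\QQ[t_F : F\in\mathscr L_M^{\geq 2}]$), and that the DHR condition is invariant under permutation of the tuple so the regrouping by multiset yields the multinomial coefficient — are the right things to verify and are both correct.
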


One recovers the following central result of \cite{Pos09} by setting $M = U_{|E|,E}$.

\begin{cor}\label{cor:pos}\cite[Corollary 9.4]{Pos09}
The volume polynomial $VP^\nabla_{U_{n+1,n+1}}(\underline t)$ of $\myChow^\bullet(X_{A_n})$ is
$$VP^\nabla_{U_{n+1,n+1}}(\underline t) = \sum_{(S_1, \ldots, S_n)} t_{S_1}\cdots t_{S_n}$$
where the sum is over ordered collections of nonempty subsets $S_1, S_2, \ldots, S_n$ such that $|\bigcup_{j\in J} S_j| \geq |J|+1$ for any $\emptyset \subsetneq J\subseteq \{1, \ldots, n\}$.
\end{cor}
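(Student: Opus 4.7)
The plan is to derive Corollary \ref{cor:pos} as the immediate specialization of Corollary \ref{cor:volpolH} to the case $M = U_{n+1,n+1}$, the Boolean matroid on $E = \{0,1,\ldots,n\}$. First I would verify the bookkeeping: the Bergman fan of the Boolean matroid is the full braid fan, $\Sigma_{U_{n+1,n+1}} = \Sigma_{A_n}$, so that $\myChow^\bullet(U_{n+1,n+1}) = A^\bullet(\Sigma_{A_n}) \simeq A^\bullet(X_{A_n})$ as identified in \S\ref{subsection:chowring}, and the rank $r = n+1$ of $U_{n+1,n+1}$ gives $d = n$, matching the dimension of the permutohedral variety.

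Next I would use the two defining features of the Boolean matroid: every nonempty subset $S \subseteq E$ is a flat, and the rank function agrees with cardinality, $\rk_{U_{n+1,n+1}}(S) = |S|$. Substituting into the right-hand side of Corollary \ref{cor:volpolH}, the variables $t_F$ become indexed by all nonempty subsets $S \subseteq E$, and the Dragon Hall-Rado condition
\[
\rk_M\Big(\bigcup_{j\in J} F_j\Big) \geq |J|+1 \quad \text{for all } \emptyset \subsetneq J \subseteq \{1,\ldots,d\}
\]
collapses to the purely combinatorial cardinality condition
\[
\Big|\bigcup_{j\in J} S_j\Big| \geq |J|+1 \quad \text{for all } \emptyset \subsetneq J \subseteq \{1,\ldots,n\},
\]
which is precisely Postnikov's dragon marriage condition. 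The sum therefore becomes the asserted expression $\sum_{(S_1,\ldots,S_n)} t_{S_1}\cdots t_{S_n}$.

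Since the argument is a direct substitution, there is no real obstacle; the only point worth checking is that the full range of nonempty subsets of $E$ (including $S = E$ itself) is admitted, which follows from the definition $h_F = -\sum_{F\subseteq G} z_G$ being taken over all nonempty flats $G$, including $G = E$, so that $h_E = -z_E$ is a well-defined generator of the simplicial presentation of $A^\bullet(X_{A_n})$. One can also observe that for $J = \{j\}$ the dragon marriage condition forces $|S_j|\geq 2$, so singleton subsets do not contribute, consistent with the fact that the atoms of $U_{n+1,n+1}$ give trivial simplicial generators $h_a = 0$.
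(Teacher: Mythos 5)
Your proof is correct and takes the same route as the paper: the paper obtains \Cref{cor:pos} simply by specializing \Cref{cor:volpolH} to the Boolean matroid $M = U_{|E|,E}$, which is exactly your argument. Your elaboration of the bookkeeping (rank equals cardinality, every subset is a flat, $d=n$) and the remark about singletons are correct but the paper presents the result as an immediate corollary without spelling these out.
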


The volume polynomial $VP_M$ of the more classical presentation $A^\bullet(M)$ of the Chow ring of a matroid $M$ by generators $\{x_F \ | \ F\in \mathscr L_M\setminus\{\emptyset, E\}\}$ was computed in \cite{Eur20}.

\subsection{Volume polynomial of a matroid is Lorentzian}  
Motivated by Remark \ref{rem:geomlorentzian} and the fact that the simplicial generators are combinatorially nef, we prove here that the volume polynomial $VP_M^\nabla$ of the simplicial presentation $\myChow^\bullet(M)$ is Lorentzian.

\begin{thm}\label{thm:VPLorentzian}
The volume polynomial $VP_M^\nabla \in \RR[t_F \ | \ F\in \mathscr L_M^{\geq 2}]$ of a loopless matroid $M$ is Lorentzian.
\end{thm}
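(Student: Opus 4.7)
The plan is to apply \Cref{thm:Lorentzian}. Since $VP_M^\nabla$ has non-negative coefficients by \Cref{cor:volpolH}, it remains to verify two things: (a) M-convexity of $\operatorname{supp}(VP_M^\nabla)$, and (b) that every $(d-2)$-fold iterated partial derivative has Hessian with at most one positive eigenvalue. For (a), \Cref{cor:volpolH} describes the support as the set of $(c_F) \in \ZZ_{\geq 0}^{\mathscr L_M^{\geq 2}}$ with $\sum_F c_F = d$ whose multisets $\{F^{c_F}\}_F$ of flats satisfy $\DHR(M)$. The M-convex exchange axiom then becomes a symmetric-exchange statement for such multisets, which I would establish using the transversal characterization in the Dragon Hall-Rado theorem: given $c, c'$ satisfying DHR with $c_{F_0} > c'_{F_0}$, one produces an $F_1$ with $c'_{F_1} > c_{F_1}$ such that $c - e_{F_0} + e_{F_1}$ still satisfies DHR, by augmenting along a common independent transversal in the matroid $M$.

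For (b), the strategy is to iterate \Cref{thm:hyper}. For any $G_1, \ldots, G_{d-2} \in \mathscr L_M^{\geq 2}$, let $M' := M \wedge H_{G_1} \wedge \cdots \wedge H_{G_{d-2}}$; by \Cref{lem:multquot} this is either a loopless matroid of rank $3$ (a sequence of principal truncations of $M$) or has a loop. Using functoriality of the cap product together with the iterated form of \Cref{thm:hyper},
\[
\partial_{t_{G_1}} \cdots \partial_{t_{G_{d-2}}} VP_M^\nabla
= \tfrac{d!}{2!}\,\int_M h_{G_1} \cdots h_{G_{d-2}}\Big(\sum_F t_F h_F\Big)^{\!2}
= \tfrac{d!}{2!}\,\int_{M'} \Big(\sum_F t_F h_F(M')\Big)^{\!2},
\]
where $h_F(M')$ denotes the image of $h_F$ in $A^1(M')$ (which equals $h_{\operatorname{cl}_{M'}(F)}(M')$, or $0$ if $\operatorname{rk}_{M'}(\operatorname{cl}_{M'}(F)) \leq 1$). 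The polynomial above is zero when $M'$ has a loop (trivially satisfying the Hessian condition); otherwise its Hessian, as a bilinear form on $\RR^{\mathscr L_M^{\geq 2}}$, is a positive scalar multiple of the pullback via $F \mapsto h_F(M')$ of the intersection pairing $(x,y) \mapsto \int_{M'}xy$ on $A^1(M')$. Since pullback cannot increase the positive index of a symmetric bilinear form, it suffices to show this pairing on $A^1(M')$ has at most one positive eigenvalue for every loopless rank-$3$ matroid $M'$.

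For this rank-$3$ step, I would use \Cref{thm:dhr} to compute the pairing directly. Listing $\mathscr L_{M'}^{\geq 2} = \{F_1, \ldots, F_k, E\}$ with the rank-$2$ flats followed by $E$, \Cref{thm:dhr} gives $\int_{M'} h_F h_G = 1$ unless $F = G \in \{F_1, \ldots, F_k\}$, in which case $\int_{M'} h_F^2 = 0$. Hence the matrix of the pairing in this basis is $J - D$, where $J$ is the $(k+1)\times(k+1)$ all-ones matrix and $D = \operatorname{diag}(1, \ldots, 1, 0)$. A direct eigenvalue calculation produces the spectrum $\bigl\{(k+\sqrt{k^2+4})/2,\ (k-\sqrt{k^2+4})/2\bigr\}$ together with $-1$ of multiplicity $k-1$, which contains exactly one positive eigenvalue, as required. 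The main obstacle is step (a): the natural candidate $G \mapsto \operatorname{rk}_M(\bigcup_{F \in G} F) - 1$ on $2^{\mathscr L_M^{\geq 2}}$ fails to be submodular in general, so the support is not the base polytope of an integer polymatroid and Murota's theorem does not apply directly; M-convexity must be obtained by a careful hand-crafted exchange argument on DHR transversals.
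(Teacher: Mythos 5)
Your overall structure mirrors the paper's proof exactly, and your step (b)---reducing the $(d-2)$-fold derivatives to the rank-$3$ case via principal truncations, then computing the Hessian of $VP_{M'}^\nabla$---is essentially the paper's argument. (The paper phrases the reduction through Proposition \ref{prop:opLorentzian}, that Lorentzianity is preserved under nonnegative linear substitution, rather than "pullback does not increase positive index," but these amount to the same thing in degree $2$. Your explicit diagonalization of the rank-$3$ matrix, with eigenvalues $(k\pm\sqrt{k^2+4})/2$ and $-1$ of multiplicity $k-1$, is a correct alternative to the paper's symmetric Gaussian elimination.) You are also right that the naive candidate $S \mapsto \operatorname{rk}_M(\bigcup_{F\in S}F)-1$ is not submodular when normalized at $\emptyset$, so Murota's theorem does not apply off the shelf.

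The genuine gap is in step (a), which you yourself flag as "the main obstacle": you never supply the argument that the support of $VP_M^\nabla$ is M-convex. The phrase "augmenting along a common independent transversal in the matroid $M$" is a direction, not a proof; it is not spelled out how a transversal for $c$ and one for $c'$ combine into an augmenting step that identifies the exchange partner $F_1$ and preserves $\DHR(M)$. This is precisely the content of Proposition \ref{prop:mconvex}, whose proof is where the real work lies. The paper's argument is not based on transversals at all; instead it defines a multiset of flats $\{A_1,\dots,A_k\}$ to be \emph{dependent} if $\operatorname{rk}_M(\bigcup_j A_j)\le k$, and then proves that the size-$(d+1)$ multiset $\{F_1,\dots,F_d,G_d\}$ contains a \emph{unique} minimally dependent submultiset (a "circuit"), exactly as adding an element to a basis of a matroid creates a unique fundamental circuit. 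Uniqueness is argued by contradiction via submodularity of $\operatorname{rk}_M$: two distinct circuits $R$, $S$ would force their symmetric-difference-style union $(R\cup S)\setminus T$ to be dependent already inside $\{F_1,\dots,F_d\}$, contradicting $\DHR(M)$. Granted the unique circuit, the exchange is immediate---pick any flat of the circuit that occurs more often among the $F_i$'s than the $G_j$'s. You should either reproduce this unique-circuit argument or give a complete version of the transversal-augmentation sketch; as written, (a) is a hole.
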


As an immediate corollary, by applying Theorem \ref{thm:logconc} to Theorem \ref{thm:VPLorentzian} we obtain:

\begin{cor}\label{cor:VPlogconc}
The volume polynomial $VP_M^\nabla$, as a polynomial in $\RR[t_F \ | \ F\in \mathscr L_M^{\geq 2}]$, is strongly log-concave in the positive orthant $\RR_{>0}^{\mathscr L_M^{\geq 2}}$.  In other words, as a function $A^1(M) \to \RR$, the polynomial $VP_M^\nabla$ is strongly log-concave in the interior of the cone $\mathscr K_M^\nabla$ generated by the nontrivial simplicial generators.
\end{cor}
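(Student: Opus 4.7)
The proof is essentially a direct consequence of the two results it cites, so my plan is simply to chain them together carefully and unwind the translation between the polynomial in the $t_F$'s and the function $A^1(M) \to \RR$.

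First I would invoke \Cref{thm:VPLorentzian}, which asserts that $VP_M^\nabla \in \RR[t_F \mid F \in \mathscr L_M^{\geq 2}]$ is a Lorentzian polynomial. This is the substantive input; the corollary itself is purely formal once this is known. Since Lorentzian polynomials by definition have nonnegative coefficients and are homogeneous, \Cref{thm:logconc}(1) applies directly: it characterizes Lorentzian polynomials with nonnegative coefficients as exactly the homogeneous polynomials that are strongly log-concave on the positive orthant, meaning every iterated partial derivative is either identically zero or has a concave logarithm on $\RR_{>0}^{\mathscr L_M^{\geq 2}}$. This gives the first half of the corollary, the strong log-concavity of $VP_M^\nabla$ as a polynomial in the $t_F$'s on $\RR_{>0}^{\mathscr L_M^{\geq 2}}$.

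For the second half, I need to transfer this strong log-concavity statement from coordinates to the intrinsic function $A^1(M) \to \RR$ defined by $\ell \mapsto \int_M \ell^d$. Here I would use two elementary facts established earlier: by the remark following \Cref{cor:monombasis}, the nontrivial simplicial generators $\{h_F \mid F \in \mathscr L_M^{\geq 2}\}$ form a basis of $A^1(M)$, and by definition the cone $\mathscr K_M^\nabla$ is the nonnegative linear span of this basis. Consequently, the linear isomorphism $\RR^{\mathscr L_M^{\geq 2}} \overset{\sim}{\to} A^1(M)$ sending the standard coordinate vector $e_F$ to $h_F$ carries the positive orthant homeomorphically onto the relative interior of $\mathscr K_M^\nabla$, and pulls back the volume function $\ell \mapsto \int_M \ell^d$ on $A^1(M)$ to the polynomial $VP_M^\nabla(\underline t) = \int_M(\sum_F t_F h_F)^d$. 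Strong log-concavity is preserved under such a linear change of coordinates (and more generally under any affine change with positive Jacobian, which is what is needed to transport statements about all iterated partial derivatives), so strong log-concavity in the $t_F$-coordinates on the positive orthant is exactly strong log-concavity of $\ell \mapsto \int_M \ell^d$ on the interior of $\mathscr K_M^\nabla$.

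There is essentially no obstacle in this corollary; the real work has already been done in \Cref{thm:VPLorentzian}, and the only point requiring care is the bookkeeping of identifying the positive orthant in the $t_F$-coordinates with the interior of $\mathscr K_M^\nabla$ via the basis of nontrivial simplicial generators. I would write the argument as a two-line proof: apply \Cref{thm:logconc}(1) to \Cref{thm:VPLorentzian} for the coordinate statement, then note the identification of coordinates with the cone $\mathscr K_M^\nabla$ for the intrinsic statement.
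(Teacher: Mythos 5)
Your proposal is correct and follows essentially the same route the paper takes: the paper states the corollary as an immediate consequence of applying \Cref{thm:logconc} to \Cref{thm:VPLorentzian}, and your extra care in identifying the positive orthant in the $t_F$-coordinates with the interior of $\mathscr K_M^\nabla$ via the basis of nontrivial simplicial generators is exactly the bookkeeping the paper leaves implicit. (One small slip: the fact that $\{h_F \mid F \in \mathscr L_M^{\geq 2}\}$ is a basis of $\myChow^1(M)$ is stated in \S\ref{subsection:nefpresentation} just after \Cref{dfn:simplicial}, not after \Cref{cor:monombasis}.)
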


We will show that the volume polynomial $VP_M^\nabla$ of a loopless matroid $M$ satisfies the two conditions listed in Theorem \ref{thm:Lorentzian}.  First, we see that the dragon Hall-Rado condition description of the support of $VP_M^\nabla$ implies that $VP_M^\nabla$ has M-convex support.

\begin{prop}\label{prop:mconvex}
Let $\{F_1, \ldots, F_d\}$ and $\{G_1, 
\dots, G_d\}$ be two multisets of flats of $M$ such that both $t_{F_1}\cdots t_{F_d}$ and $t_{G_1}\cdots t_{G_d}$ are in the support of $VP_M^\nabla$.  Without loss of generality, suppose  $G_d$ is a flat which appears more times in $\{G_1, 
\dots, G_d\}$ than it does in $\{F_1, \dots, F_d\}$.  Then, there exists another flat $F_m$ which appears more times in $\{F_1, \ldots, F_d\}$ than it does in $\{G_1, 
\dots, G_d \}$ such that $t_{F_1}\cdots t_{F_d}t_{G_d}/t_{F_m}$ is in the support of $VP_M^\nabla$.
\end{prop}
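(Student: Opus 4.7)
By \Cref{thm:dhr} (equivalently \Cref{cor:volpolH}), a monomial $t_{F_1}\cdots t_{F_d}$ lies in the support of $VP_M^\nabla$ if and only if $\{F_1,\dots,F_d\}$ satisfies $\DHR(M)$.  The plan is to rephrase the desired exchange property in those terms and then argue by contradiction via submodularity of $\rk_M$.  Encode a size-$d$ multiset by its multiplicity vector $\mathbf a = (a_F)_{F\in \mathscr L_M^{\geq 2}} \in \ZZ_{\geq 0}^{\mathscr L_M^{\geq 2}}$ with $\sum_F a_F = d$; then $\DHR(M)$ reads
\[
\textstyle \sum_{F\in \mathcal S} a_F \;\leq\; \rk_M\bigl(\bigcup_{F\in\mathcal S} F\bigr) - 1 \qquad \text{for every nonempty }\mathcal S \subseteq \mathscr L_M^{\geq 2},
\]
and I call $\mathcal S$ \emph{tight for $\mathbf a$} when equality holds.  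Let $\mathbf a,\mathbf b$ denote the multiplicity vectors of $\{F_1,\dots,F_d\}$ and $\{G_1,\dots,G_d\}$; the goal becomes: given $b_{G_d} > a_{G_d}$, produce a flat $F_m$ with $a_{F_m} > b_{F_m}$ such that $\mathbf a + \mathbf e_{G_d} - \mathbf e_{F_m}$ still satisfies $\DHR(M)$.

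Suppose for contradiction no such $F_m$ exists.  For each flat $F$ with $a_F > b_F$, any set witnessing the post-swap failure must contain $G_d$, omit $F$, and force the $\DHR(M)$ inequality for $\mathbf a$ to be attained with equality---that is, be tight for $\mathbf a$.  For each such $F$ choose a tight witness $\mathcal S_F$ of minimum cardinality, then fix $F_*$ minimizing $|\mathcal S_{F_*}|$ across all such $F$.  Tightness of $\mathcal S_{F_*}$ for $\mathbf a$ combined with $\DHR(M)$ for $\mathbf b$ gives $\sum_{H\in \mathcal S_{F_*}} b_H \leq \sum_{H\in \mathcal S_{F_*}} a_H$; since $b_{G_d} > a_{G_d}$ and $G_d \in \mathcal S_{F_*}$, some $F' \in \mathcal S_{F_*}$ must satisfy $a_{F'} > b_{F'}$, with associated tight witness $\mathcal S_{F'}$ (containing $G_d$, omitting $F'$).

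The crux is submodularity of $\rk_M$.  Writing $A_* = \bigcup_{H\in \mathcal S_{F_*}} H$ and $A' = \bigcup_{H\in \mathcal S_{F'}} H$, we have
\[
\rk_M(A_*) + \rk_M(A') \;\geq\; \rk_M(A_*\cup A') + \rk_M(A_*\cap A').
\]
The left side equals $\sum_{H\in \mathcal S_{F_*}} a_H + \sum_{H\in \mathcal S_{F'}} a_H + 2$ by tightness, while the right side is bounded below by $\sum_{H\in \mathcal S_{F_*}\cup \mathcal S_{F'}} a_H + \sum_{H\in \mathcal S_{F_*}\cap \mathcal S_{F'}} a_H + 2$, invoking $\DHR(M)$ for $\mathbf a$ on both $\mathcal S_{F_*}\cup\mathcal S_{F'}$ and $\mathcal S_{F_*}\cap \mathcal S_{F'}$ together with the containment $\bigcup_{H\in \mathcal S_{F_*}\cap \mathcal S_{F'}} H \subseteq A_*\cap A'$.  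The two sides of the resulting chain are equal by inclusion-exclusion for finite sums, so every inequality collapses to equality; in particular $\mathcal S_{F_*}\cap \mathcal S_{F'}$ is tight for $\mathbf a$.  This intersection contains $G_d$ but not $F_*$, so is itself a valid witness for $F_*$; minimality then forces $\mathcal S_{F_*}\cap \mathcal S_{F'} = \mathcal S_{F_*}$, i.e.\ $\mathcal S_{F_*}\subseteq \mathcal S_{F'}$.  But this gives $F' \in \mathcal S_{F_*}\subseteq \mathcal S_{F'}$, contradicting $F'\notin \mathcal S_{F'}$.

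The main obstacle is that the set function $\rho(\mathcal S) := \rk_M\bigl(\bigcup_{F\in \mathcal S} F\bigr) - 1$ on $2^{\mathscr L_M^{\geq 2}}$ is \emph{not} itself submodular (because $\bigcup_{H\in \mathcal S_1\cap \mathcal S_2} H$ can be a proper subset of $A_1\cap A_2$), so one cannot just invoke a textbook polymatroid base-exchange for $\rho$.  The route above sidesteps this difficulty by applying submodularity to the rank function $\rk_M$ on $2^E$ rather than to $\rho$ directly; the tightness of the intersection of two tight witnesses then falls out precisely because the chain of inequalities produced by submodularity of $\rk_M$ leaves no slack.
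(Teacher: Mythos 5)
Your proof is correct, and its organization differs from the paper's even though both ride on the same submodularity engine. The paper calls a multiset of flats $\{A_1,\dots,A_k\}$ \emph{dependent} when $\rk_M(\bigcup_j A_j)\le k$, establishes that $\{F_1,\dots,F_d,G_d\}$ contains a \emph{unique} minimally dependent sub-multiset (its ``circuit'' $X$) by showing that two distinct circuits would, via submodularity, yield a dependent sub-multiset of $\{F_1,\dots,F_d\}$, and then reads the exchange element $F_m$ off $X$. You instead argue by contradiction without isolating the circuit: assuming no valid exchange target exists, you attach to each candidate $F$ a cardinality-minimal tight witness $\mathcal S_F$ containing $G_d$ and omitting $F$, and use submodularity of $\rk_M$ on $2^E$ (crucially, not of the derived set function $\rho$, which you correctly observe fails to be submodular) to force the intersection of two tight witnesses to be tight, contradicting minimality of $\mathcal S_{F_*}$. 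The two are essentially dual---circuits encode the dependence to be broken, tight witnesses encode the constraints that would be violated---and the submodularity chase is identical in both. Your route has one concrete advantage in completeness: the paper's final step (that deleting one copy of $F_m$ actually destroys $X$) silently requires $x_F = a_F$ for every $F$ in the support of $X$ with $F \ne G_d$; this is true but needs the short observation that $\DHR(M)$ applied to the support of $X$ squeezes $\sum a_F + 1 \le |X| = \sum x_F \le \sum a_F + 1$, whereas your contradiction framework never has to supply it.
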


\begin{proof}  First note that$\{F_1, \dots, F_d\}$ and $\{G_1, 
\dots, G_d\}$ satisfy the Dragon-Hall-Rado condition.  In this argument, we borrow standard language from (poly)matroid theory \cite{HH02}.  Let us call a multiset of flats $\{A_1, \dots A_k\}$ ${\emph dependent}$ if $\operatorname{rk}_M(\bigcup_{j=1}^{k} A_j) \leq k  $, and ${\emph independent}$ otherwise.  We will also call a minimally dependent set of flats a \emph{circuit}. We claim that the multiset of flats $\{F_1, \ldots, F_d, G_d\}$ contains a unique circuit $X$.  The proposition will follow from this claim because we can take $F_m$ to be any flat in $X$ which appears more times in  $\{F_1, \ldots, F_d\}$ than it does in $\{G_1, \dots, G_d\}$, and $\{F_1, \ldots, F_d, G_d\}\setminus\ \{F_m\}$  will satisfy the Dragon-Hall-Rado condition.

To prove the claim, suppose to the contrary that $\{R_1, \dots, R_a\}$ and $  \{S_1, \dots, S_b\}$ are two distinct circuits which are both subsets of $\{F_1, \dots F_d, G_d\}$.  We will derive a contradiction by producing a dependent set of flats contained in $\{F_1, \ldots, F_d\}$.   By assumption, $\operatorname{rk}_M(\bigcup_{j=1}^{a} R_j) \leq a$, and for each $1\leq i\leq a$, we have $\operatorname{rk}_M(\bigcup_{j\neq i} R_j) \geq a$, therefore $\operatorname{rk}_M(\bigcup_{j=1}^{a} R_j) = a$.  Similarly, $\operatorname{rk}_M(\bigcup_{j=1}^{b} S_j) =b$.  Because $\{R_1, \dots, R_a\}$ and $\{S_1, \dots, S_b\}$ are not fully contained in $\{F_1,\dots, F_d\}$, we know that $G_d \in \{R_1, \dots, R_a\} \cap \{S_1, \dots, S_b\}$. Let $\{R_1, \dots, R_a\} \cap \{S_1, \dots, S_b\} = \{T_1, \dots, T_c\}$, where intersection is multiset intersection, and without loss of generality assume $G_d = R_a = S_b = T_c$.

Let $R$ and $S$ be the joins of the elements in $\{R_1, \dots, R_a\}$ and $\{S_1, \dots, S_b\}$, respectively.  We have that $R = \bigvee_{j=1}^{a-1} R_j =\bigvee_{j=1}^{a} R_j$ and $S = \bigvee_{j=1}^{b-1} S_j =\bigvee_{j=1}^{b} S_j$, otherwise $\{R_1, \dots, R_{a-1}\}$ and $\{S_1, \dots, S_{b-1}\}$ would both be dependent, contradicting minimality.  Therefore the join of the elements in the multiset 
 $\{R_1, \dots, R_{a-1},S_1, \dots, S_{b-1}\} \setminus \{T_1, \dots T_{c-1}\}$ is $R \vee S$.  Note that the previous subtraction is multiset subtraction.   Furthermore, $|\{R_1, \dots, R_{a-1},S_1, \dots, S_{b-1}\} \setminus \{T_1, \dots T_{c-1}\}| =  (a-1)+(b-1)-(c-1) = a+b-c-1.$

 By minimality of $\{R_1, \dots, R_a\}$ and $\{S_1, \dots, S_b\}$, we know that $\{T_1, \dots, T_c\}$ is independent, thus $\operatorname{rk}_M(\bigcup_{j=1}^{c} T_j) \geq c+1  $.  Submodularity then gives that $$\operatorname{rk}_M(R \cup S)\leq \operatorname{rk}_M(R)+ \operatorname{rk}_M( S)-\operatorname{rk}_M(R \cap S)$$ $$  \leq a+b -\operatorname{rk}_M(\bigcup_{j=1}^{c} T_j) \leq a+b-c-1. $$  
 
 Therefore the mulitset $\{R_1, \dots, R_{a-1},S_1, \dots, S_{b-1}\} \setminus \{T_1, \dots T_{c-1}\}$ is dependent in $\{F_1, \ldots, F_d\}$, which contradicts the fact that this set satisfied the Dragon-Hall-Rado condition.
\end{proof}

\begin{rem}\label{rem:prodproj}
Suppose $M$ has a realization $\mathscr R(M)$.  By \Cref{rem:wndcpt}.(2), the wonderful compactification $Y_{\mathscr R(M)}$ is embedded in the product $\prod_{F \in \mathscr L_M \setminus \{\emptyset\}} \PP(V^*/L_F)$.
Our simplicial generators are pullbacks of the hyperplane classes of the projective spaces $\PP(V^*/L_F)$ (see \Cref{rem:simplicialpullback}).  Thus, in this case, that the support of $VP_M^\nabla$ is M-convex follows from the result of \cite{CCRLMZ20} that the multidegree of an irreducible variety in a product of projective spaces has M-convex support.
\end{rem}

\begin{proof}[Proof of Theorem \ref{thm:VPLorentzian}]
Let $M$ be a loopless matroid of rank $r= d+1$.  There is nothing to prove if $d = 1$, so we assume $d\geq 2$.  The support of $VP_M^\nabla$ is M-convex by the previous proposition.  We now show that the quadrics obtained as the $(d-2)$-th partial derivatives have the desired signature.  Observe that for a flat $G$ of rank $\geq2$, we have
$$\frac{\partial}{\partial t_G} VP_M^\nabla(t) = d\int_M h_G \cdot \Big(\sum_{F\in \mathscr L_M^{\geq 2}} t_F h_F \Big)^{d-1} = d\int_{T_G(M)} \Big(\sum_{F\in \mathscr L_M^{\geq 2}} t_Fh_{\operatorname{cl}_{T_G(M)}(F)} \Big)^{d-1}.$$
Now, suppose $\{F_1, \ldots, F_{d-2}\}$ is a multiset of size $d-2$ consisting of flats of $M$ with rank at least 2.  If $\{F_1, \ldots, F_{d-2}\}$ does not satisfy $\DHR(M)$, then $\partial_{t_{F_1}}\cdots \partial_{t_{F_{d-2}}} VP_M^\nabla \equiv 0$, so we may assume that $\{F_1, \ldots, F_{d-2}\}$ satisfies $\DHR(M)$.  One computes that
$$\partial_{t_{F_1}}\cdots \partial_{t_{F_{d-2}}} VP_M^\nabla(t) = \frac{d!}{2!} \int_{M'} \Big (\sum_{F\in \mathscr L_M^{\geq 2}} t_F h_{\operatorname{cl}_{M'}(F)} \Big)^2$$
where $M' = M \wedge H_{F_1} \wedge \cdots \wedge H_{F_{d-2}}$ is a loopless matroid of rank 3.  By Proposition \ref{prop:opLorentzian}, it suffices to check that $VP_{M'}^\nabla$ is Lorentzian.  For any loopless matroid $M'$ of rank 3, the degree 1 part $\myChow^1(M)$ of its Chow ring has the simplicial basis $\{h_E\} \cup \{ h_F : \operatorname{rk}_{M'}(F) = 2\}$.  Noting that $\int_{M'} h_E\cdot h_E = 1$, $\int_{M'} h_E\cdot h_F = 1$, and $\int_{M'}h_F\cdot h_{F'} = 1$ if $F\neq F'$ and 0 otherwise, the Hessian matrix of the quadratic form $VP^\nabla_{M'}$ is 2 times the matrix
$$
\mathcal H = \begin{bmatrix}
1 & 1 & 1 & \cdots & 1\\
1 & 0 & 1 & \cdots & 1\\
1 & 1 & \ddots & \ddots & \vdots\\
\vdots & \vdots & \ddots & \ddots & 1 \\
1 & 1 & \cdots & 1 & 0
\end{bmatrix}
\textnormal{, which is congruent to}
\begin{bmatrix}
1 & \ & \ & \ \\
\ & -1 & \ & \ \\
\ & \ & \ddots & \ \\
\ & \ & \ & -1
\end{bmatrix}
$$
via the symmetric Gaussian elimination:  Explicitly, let $E_i$ be the elementary matrix such that multiplying $E_i$ to a matrix $A$ results in adding $-1$ times the first row of $A$ to the $i$-th row of $A$, and let $E_i^T$ be its transpose.  Then, the claimed congruence is given by $E_{|\mathscr L_M\setminus \{\emptyset\}|} \cdots E_3 E_2 \mathcal  H E_2^T E_3^T \cdots E_{|\mathscr L_M\setminus \{\emptyset\}|}^T$.
\end{proof}

\section{The Hodge theory of matroids in degrees at most 1}\label{section:HIT}

The \textbf{reduced characteristic polynomial} of $M$ is defined as
\[
        \bar \chi_M(t) := \frac{1}{t-1} \sum_{F \in \mathscr L_M} \mu(\emptyset, F) t^{\rk(M) - \rk(F)} = \sum_{k = 0}^{d} (-1)^k \mu^k(M) t^{d - k}
\]
where $\mu(-,-)$ is the M\"obius function of the lattice $\mathscr L_M$ and $\mu^i(M)$ is the absolute value of the $i$th coefficient of $\overline\chi_M(t)$.  The Heron-Rota-Welsh conjecture \cite{Rot71, Her72, Wel76} stated that
\[
        \mu^{k-1}(M) \mu^{k+1}(M) \leq \mu^k(M)^2 \quad \text{for $0 < k < d$}.
\]

To resolve the conjecture, the authors of \cite{AHK18} showed that the Chow ring of a matroid satisfies properties enjoyed by the cohomology ring of a smooth projective complex variety.  Namely, these are the Poincar\'e duality property, the hard Lefschetz property, and the Hodge-Riemann relations, which together form the ``K\"ahler package.''  In particular, the Hodge-Riemann relation in degree 1 implies the Heron-Rota-Welsh conjecture \cite[\S9]{AHK18}.

\smallskip
To prove that the Chow ring of a matroid satisfies the Hodge-Riemann relations, the authors of \cite{AHK18} adapt a line of argument that originally appeared in McMullen's work on simple polytopes \cite{McM93}. 
Their method employs a double induction on the rank of the matroid and the size of an order filter on the matroid's lattice of flats: the outer induction on rank shows that the Hodge-Riemann relations hold for all ample classes if they hold for a single ample class, and the inner induction on the size of an order filter is then used to construct an ample class for which the Hodge-Riemann relations can be verified.

\medskip
In this section, we independently establish the Hodge-Riemann relations in degree 1 using a similar argument.
As we have established in the previous section that the volume polynomial $VP_M^\nabla$ of a matroid $M$ is strongly log-concave in the subcone $\mathscr K_M^\nabla$ of the ample cone $\mathscr K_M$, we are able to avoid working with generalized Bergman fans associated to order filters and the flipping operation which interpolates between them.  Thus our proof involves only classical Bergman fans associated to matroids and takes the form of a single induction on rank alone.

\subsection{The K\"ahler package in degree 1 and log-concavity}

We begin by discussing here the statements of the K\"ahler package, and how in degree 1 they relate to log-concavity.  We then provide some generalities on the inductive paradigm for proving K\"ahler package for Chow cohomology rings of fans, similar to one in \cite{AHK18} adapted from the earlier work \cite{McM93}.

\begin{defn}
\label{dfn:hlhr}
Let $(A^\bullet, \int)$ be a Poincar\'e duality $\mathbbm k$-algebra of dimension $d$ with degree map $\int$.  For $\ell \in A^1$ and $0\leq i \leq \lfloor\frac{d}{2}\rfloor$, we define $L_{\ell}^k$ to be the Lefschetz operator
$$L_{\ell}^i: A^i \to A^{d-i},\ a\mapsto \ell^{d-2i}a,$$
and define $Q_\ell^i$ to be Hodge-Riemann symmetric bilinear form
$$Q_\ell^i: A^i \times A^i\to \mathbbm k, \ (x,y) \mapsto \int xy\ell^{d-2i}.$$
We define the set of \textbf{degree $i$ primitive classes of $\ell$} to be $P_{\ell}^i := \{ x \in A^k: x\ell^{d-2i+1}=0\}$. 
\end{defn}

\begin{defn}
Let $(A^\bullet, \int)$ be a Poincar\'e duality $\RR$-algebra of dimension $d$, and let $\ell \in A^1$.  For $0\leq i \leq \lfloor\frac{d}{2}\rfloor$, we say that $(A^\bullet, \int)$ satisfies 
\begin{itemize}
    \item$\operatorname{HL}^i_\ell$ if $L_{\ell}^i$ induces an isomorphism between  $A^i$ and $A^{d-i}$, and
    \item $\operatorname{HR}_\ell^i$ if the symmetric form $(-1)^i Q_{\ell}^i$ is positive-definite when restricted to $P^i_\ell$.
\end{itemize}
Moreover, for $\mathscr K$ a convex cone in $A^1$, we say that $(A^\bullet, \int, \mathscr K)$ satisfies the \textbf{hard Lefschetz property} ($\operatorname{HL}^i_{\mathscr K}$), resp.\ the \textbf{Hodge-Riemann relation} ($\operatorname{HR}^i_{\mathscr K}$), in degree $i$ if $A^\bullet$ satisfies $\operatorname{HL}_\ell^i$, resp. $\operatorname{HR}_\ell^i$, for all $\ell\in \mathscr K$.
\end{defn}

The Poincar\'e duality property (PD) of $(A^\bullet, \int)$ implies that the form $Q_\ell^i$ is non-degenerate if and only if $\operatorname{HL}_\ell^i$ holds.  The properties (PD), (HL), and (HR) together are called the \textbf{K\"ahler package} for a graded ring $A^\bullet$.  We will write $\operatorname{HL}^{\leq i}$ and $\operatorname{HR}^{\leq i}$ to mean hard Lefschetz property and Hodge-Riemann in degrees at most $i$, respectively.  The relation between log-concavity and the K\"ahler package in degree $\leq 1$ was realized in various contexts; for a survey we point to \cite{Huh18a}.  We will only need the following, adapted from \cite[Proposition 5.6]{BH20}.  It also appeared in \cite[\S2.3]{AOGV18}, and is a consequence of the Cauchy interlacing theorem.

\begin{prop}\label{prop:logconcHR}
Let $A^\bullet$ be a Poincar\'e duality $\RR$-algebra of dimension $d$ with degree map $\int$, and $\mathscr K$ a convex cone in $A^1$.  Suppose $(A^\bullet, \int, \mathscr K)$ satisfy $\operatorname{HL}_{\mathscr K}^0$ and $\operatorname{HR}_{\mathscr K}^0$.  Then the following are equivalent:
\begin{enumerate}
\item The volume function $\operatorname{vol}: A^1 \to \RR, \ \ell \mapsto \int \ell^d$ is log-concave on $\mathscr K$, and
\item for any $\ell\in \mathscr K$, the symmetric form $Q_\ell^1$ has exactly one positive eigenvalue.
\end{enumerate}
In particular, if the volume polynomial $VP_A$ of $A^\bullet = \RR[x_1, \ldots, x_s]/I$ is Lorentzian, then $(A^\bullet, \int, \mathscr K)$ satisfies $\operatorname{HR}^{\leq 1}_{\mathscr K}$ where $\mathscr K$ is the interior of $\operatorname{Cone}(x_1,\ldots, x_s)$, provided that $A^\bullet$ satisfies $\operatorname{HL}_{\mathscr K}^{\leq 1}$.
\end{prop}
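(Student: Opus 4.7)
The plan is to read off both conditions from the Hessian of $\operatorname{vol}(\ell)=\int\ell^d$ and to exploit the $Q_\ell^1$-orthogonal decomposition $A^1=\RR\ell\oplus P_\ell^1$. Expanding $(\ell+tx)^d$ gives $\operatorname{vol}'(\ell)(x)=dQ_\ell^1(\ell,x)$ and $\operatorname{vol}''(\ell)(x,x)=d(d-1)Q_\ell^1(x,x)$, while the assumed $\operatorname{HL}^0_{\mathscr K}$ and $\operatorname{HR}^0_{\mathscr K}$ secure $\operatorname{vol}(\ell)=Q_\ell^1(\ell,\ell)>0$ on $\mathscr K$. In particular, the linear map $A^1\to A^d$, $z\mapsto z\ell^{d-1}$, has kernel $P_\ell^1$ and sends $\ell$ to $\ell^d\neq 0$, so $P_\ell^1$ has codimension one, $\ell\notin P_\ell^1$, and $A^1=\RR\ell\oplus P_\ell^1$ is a $Q_\ell^1$-orthogonal decomposition.

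For (1)$\Rightarrow$(2), I will rewrite $(\log\operatorname{vol})''(\ell)(x,x)\leq 0$ as $\operatorname{vol}(\ell)\cdot\operatorname{vol}''(\ell)(x,x)\leq(\operatorname{vol}'(\ell)(x))^2$; substituting the identities above and dividing by $d$ yields
\[
(d-1)\,Q_\ell^1(\ell,\ell)\,Q_\ell^1(x,x)\ \leq\ d\,Q_\ell^1(\ell,x)^2 \qquad\text{for all }x\in A^1.
\]
Specializing to $x\in P_\ell^1$ kills the right hand side, so $Q_\ell^1(x,x)\leq 0$; combined with $Q_\ell^1(\ell,\ell)>0$, this shows $Q_\ell^1$ has exactly one positive eigenvalue. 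For (2)$\Rightarrow$(1), I will apply Cauchy interlacing to the restriction of $Q_\ell^1$ to $\operatorname{span}(\ell,x)$: since $Q_\ell^1$ has exactly one positive eigenvalue, so does the $2\times 2$ restriction, which forces its determinant to be nonpositive, i.e.\ $Q_\ell^1(\ell,\ell)Q_\ell^1(x,x)\leq Q_\ell^1(\ell,x)^2$. Multiplying by $d-1\geq 0$ and using $(d-1)Q_\ell^1(\ell,x)^2\leq d\,Q_\ell^1(\ell,x)^2$ (since $Q_\ell^1(\ell,x)^2\geq 0$) recovers the Hessian inequality above, so $\log\operatorname{vol}$ is concave on $\mathscr K$.

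For the ``in particular'' clause: a Lorentzian polynomial has nonnegative coefficients, so $VP_A\geq 0$ on $\RR^s_{\geq 0}$; combined with $\operatorname{HL}^0_{\mathscr K}$, this upgrades to $\operatorname{vol}>0$ on $\mathscr K$, establishing $\operatorname{HR}^0_{\mathscr K}$. By \Cref{thm:logconc}(1), $VP_A$ being Lorentzian implies $\operatorname{vol}$ is log-concave on $\mathscr K$, so the equivalence just proved yields that $Q_\ell^1$ has exactly one positive eigenvalue for every $\ell\in\mathscr K$; the further assumption $\operatorname{HL}^1_{\mathscr K}$ says $Q_\ell^1$ is non-degenerate, pinning down its signature to $(1,\dim A^1-1)$, and restricting to $P_\ell^1$ gives $-Q_\ell^1|_{P_\ell^1}$ positive definite, which is $\operatorname{HR}^1_\ell$. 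The only nonmechanical ingredient is the Cauchy interlacing step; a minor subtlety to track is the slack factor $(d-1)/d$ between the log-concavity inequality and reverse Cauchy--Schwarz, which works in our favor since $Q_\ell^1(\ell,x)^2\geq 0$.
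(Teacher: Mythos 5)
The paper does not give its own proof of this proposition; it cites \cite[Proposition 5.6]{BH19} and \cite[\S2.3]{AOGV18} and remarks only that the equivalence ``is a consequence of the Cauchy interlacing theorem.'' Your argument supplies exactly the proof one would expect those references to give: you compute $\operatorname{vol}'(\ell)(x) = d\,Q_\ell^1(\ell,x)$ and $\operatorname{vol}''(\ell)(x,x) = d(d-1)\,Q_\ell^1(x,x)$, note that $\operatorname{HL}^0_{\mathscr K}$ and $\operatorname{HR}^0_{\mathscr K}$ give $\operatorname{vol}>0$ and the $Q_\ell^1$-orthogonal splitting $A^1 = \RR\ell \oplus P_\ell^1$, derive (2) from (1) by evaluating the Hessian inequality on $P_\ell^1$, and derive (1) from (2) by Cauchy interlacing on $\operatorname{span}(\ell,x)$ together with the helpful slack $(d-1)\leq d$. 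The ``in particular'' clause is also handled correctly: nonnegativity of Lorentzian coefficients plus $\operatorname{HL}^0_{\mathscr K}$ gives $\operatorname{HR}^0_{\mathscr K}$, \Cref{thm:logconc}(1) gives log-concavity on the positive orthant hence on $\mathscr K$, and non-degeneracy from $\operatorname{HL}^1_{\mathscr K}$ pins the signature to $(1,\dim A^1 - 1)$, so $Q_\ell^1|_{P_\ell^1}$ is negative definite. This is correct.

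Two small points worth being explicit about. First, passing from ``$\log\operatorname{vol}$ concave on $\mathscr K$'' to the Hessian inequality $\operatorname{vol}(\ell)\operatorname{vol}''(\ell)(x,x)\leq(\operatorname{vol}'(\ell)(x))^2$ for \emph{all} $x\in A^1$ (rather than only for $x\in\mathscr K-\mathscr K$) uses that $\mathscr K$ spans $A^1$; this holds in every instance used in the paper ($\mathscr K_M^\nabla$ and $\mathscr K_M$ are open, and $\mathscr K$ in the ``in particular'' clause is the interior of a full-dimensional cone), but the general statement is slightly imprecise without it. Second, the arguments implicitly assume $d\geq 2$ so that $Q_\ell^1$ makes sense and $d-1>0$; for $d\leq 1$ both sides of the equivalence are vacuous.
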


\medskip
We now turn to an inductive paradigm for establishing (HL) and (HR).  We assume all Poincar\'e duality algebras to be over $\RR$.  We begin by noting an easy linear algebraic observation also made in \cite[Proposition 7.16]{AHK18}.

\begin{prop}\label{fullHIT}
Let $(A^\bullet, \int, \mathscr K)$ be a Poincar\'e duality algebra which satisfies $\operatorname{HL}^i_{\mathscr K}$ for $\mathscr K$ in a convex cone in $A^1$. Suppose that $(A^\bullet, \int)$ satisfies $\operatorname{HR}^i_{\ell}$ for some $\ell \in \mathscr K$.  Then $A^\bullet$ satisfies $\operatorname{HR}^i_{\mathscr K}$.
\end{prop}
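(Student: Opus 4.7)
The plan is to run the classical connectedness-of-signature argument: since $\mathscr K$ is convex and hence connected, and $\operatorname{HL}^i_{\mathscr K}$ keeps the relevant symmetric forms nondegenerate throughout $\mathscr K$, eigenvalues cannot cross zero as one moves inside $\mathscr K$, so signatures are locally constant, and positive-definiteness at a single $\ell$ spreads to all of $\mathscr K$.

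My first step is to repackage the hypotheses as statements about the single bilinear form $Q^i_{\ell'}$ on $A^i$. Using the Poincar\'e duality pairing, the kernel of $Q^i_{\ell'}$ coincides with $\ker L^i_{\ell'}$, so $\operatorname{HL}^i_{\ell'}$ is equivalent to $Q^i_{\ell'}$ being nondegenerate on $A^i$. A second application of Poincar\'e duality identifies the primitive subspace $P^i_{\ell'}$ with the $Q^i_{\ell'}$-orthogonal complement of $\ell' A^{i-1}$ in $A^i$: indeed $Q^i_{\ell'}(x, \ell' y) = \int xy \,\ell'^{d-2i+1}$, which vanishes for all $y \in A^{i-1}$ exactly when $x \in P^i_{\ell'}$. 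Whenever the restriction of $Q^i_{\ell'}$ to $\ell' A^{i-1}$ is itself nondegenerate, this yields an orthogonal direct sum decomposition $A^i = P^i_{\ell'} \oplus \ell' A^{i-1}$ with a matching additive splitting of signatures.

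Next, for an arbitrary $\ell' \in \mathscr K$, convexity of $\mathscr K$ lets me join $\ell$ to $\ell'$ by the straight-line path $\ell_t := (1-t)\ell + t\ell'$, which stays inside $\mathscr K$. Along this path $Q^i_{\ell_t}$ depends polynomially on $t$, so its eigenvalues vary continuously in $t$; by $\operatorname{HL}^i_{\ell_t}$ none of them cross zero. Hence the signature of $(-1)^i Q^i_{\ell_t}$ on $A^i$ is constant on $[0,1]$, and via the orthogonal decomposition from the previous step so is its restriction to $P^i_{\ell_t}$. The initial condition $\operatorname{HR}^i_\ell$ pins down that restriction as positive definite at $t=0$, and transporting this along the path gives $\operatorname{HR}^i_{\ell'}$.

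The main obstacle is controlling the orthogonal splitting $A^i = P^i_{\ell_t} \oplus \ell_t A^{i-1}$ as $t$ varies---in particular, that $\dim P^i_{\ell_t}$ does not jump and that $Q^i_{\ell_t}|_{\ell_t A^{i-1}}$ stays nondegenerate, which in full generality requires hard Lefschetz input in degree $i-1$. For the application the paper targets, namely $i \leq 1$, the summand $\ell_t A^{i-1}$ is either zero or the line $\RR \ell_t$, and the splitting is automatic once one knows $\int \ell_t^d > 0$---itself a consequence of the same connectedness argument applied in degree zero.
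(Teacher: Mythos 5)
Your proposal follows the paper's own connectedness-of-signature argument, but it is more careful, and the ``main obstacle'' you flag is genuine: constancy of the signature of $Q^i_{\ell_t}$ on all of $A^i$ does not by itself control the signature of its restriction to the \emph{moving} subspace $P^i_{\ell_t}$, and the orthogonal splitting $A^i = P^i_{\ell_t}\oplus \ell_t A^{i-1}$ that would make this transfer possible needs hard Lefschetz in degree $i-1$, not just $i$. This is not cosmetic---the proposition as stated, with only $\operatorname{HL}^i_{\mathscr K}$ assumed, is false. A counterexample at $i = 1$: take the Poincar\'e duality algebra of dimension $d=2$ with $A^0 = \RR$, $A^1 = \RR e_1 \oplus \RR e_2 \oplus \RR e_3$, $A^2 = \RR\omega$, $e_i e_j = q_i \delta_{ij}\omega$ for $(q_1,q_2,q_3)=(1,-1,-1)$, and $\int\omega = 1$. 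Here $d - 2i = 0$, so $L^1_{\ell}$ is the identity for every $\ell$ and $\operatorname{HL}^1_{\mathscr K}$ holds trivially for any cone $\mathscr K$; meanwhile $Q^1_\ell = \operatorname{diag}(1,-1,-1)$ is constant in $\ell$, $P^1_\ell$ is the $Q^1_\ell$-orthogonal complement of $\RR\ell$, and $\operatorname{HR}^1_\ell$ holds exactly when $\int\ell^2 > 0$. With $\mathscr K = \{ae_1 + be_2 + ce_3 : a > 0\}$ one has $\operatorname{HR}^1_{e_1}$ but not $\operatorname{HR}^1_{e_1 + 2e_2}$, both in $\mathscr K$.

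The correct hypothesis is $\operatorname{HL}^{\leq i}_{\mathscr K}$. Granting $\operatorname{HL}^{i-1}_{\ell_t}$, multiplication by $\ell_t$ gives an isometry from $(A^{i-1}, Q^{i-1}_{\ell_t})$ onto $(\ell_t A^{i-1}, Q^{i}_{\ell_t})$ with nondegenerate image, so the orthogonal splitting holds and both summand-signatures are constant along the path, closing the argument exactly as you describe. This stronger hypothesis is what the paper actually has in hand when it invokes \Cref{fullHIT} in proving \Cref{thm:HRone}: the proposition is used only at $i \in \{0,1\}$, at $i=0$ there is no complement, and at $i=1$ the proof has already secured $\operatorname{HL}^{\leq 1}_{\mathscr K_M}$ and the degree-$0$ positivity $\int_M\ell^d > 0$ before applying it. Your closing remark, that for $i\leq 1$ the splitting reduces to $\int\ell_t^d > 0$, pins down exactly why the paper's application is nonetheless sound even though the proposition is stated slightly too weakly.
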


\begin{proof}
Let $\ell' \in \mathscr K$, and let $l(t) = t\ell + (1-t)\ell'$ for $t \in [0,1]$ be a line segment connecting $\ell$ and  $\ell'$.  By convexity of $\mathscr K$, we know that every point on $l$ is in $\mathscr K$.  If the signature of the bilinear pairing $Q_{l(t)}^i$ changes along $l(t)$ starting at $\ell$, then it must degenerate at some point $l(t_0)$ for $t_0\in [0,1]$, but this violates $\operatorname{HL}_{\mathscr K}^i$.
\end{proof}

We now note how properties (HL) and (HR) behave under tensor products and quotients by annihilators of an element.  While these are adapted from \cite[\S7]{AHK18} where they are phrased in terms of Chow cohomology rings of fans, because we restrict ourselves K\"ahler package up to degree 1, we can provide here easier and more direct proofs for general Poincar\'e duality algebras.

\begin{prop}\label{HITtens}
Let $(A^\bullet, \int_A)$ and $(B^\bullet, \int_B)$ be two Poincar\'e duality algebras of dimension $d_A\geq 1$ and $d_B\geq 1$.  Suppose that $A^\bullet$ and $B^\bullet$  satisfy $\operatorname{HR}_{\ell_A}^{\leq 1}$ and $\operatorname{HR}_{\ell_B}^{\leq 1}$, respectively, then $((A \otimes B)^\bullet, \int_{A\otimes B})$ satisfies $\operatorname{HR}_{\ell_A \otimes 1 + 1 \otimes \ell_B}^{\leq 1}$.
\end{prop}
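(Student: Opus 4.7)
The plan is to verify the K\"ahler package in degrees $\leq 1$ at $\ell := \ell_A \otimes 1 + 1 \otimes \ell_B$ by a direct block calculation with respect to the decomposition
\[
(A \otimes B)^1 = A^1 \oplus B^1 = P_{\ell_A}^1 \oplus \RR\ell_A \oplus P_{\ell_B}^1 \oplus \RR\ell_B,
\]
which uses HR$^{\leq 1}$ for the factors to split each of $A^1$ and $B^1$ into primitive part plus line. Let $n := d_A + d_B$. Degree $0$ is immediate: binomially expanding $\ell^n$ and projecting onto the top degree $(A\otimes B)^n = A^{d_A}\otimes B^{d_B}$, only one term survives, yielding
\[
\int_{A\otimes B}\ell^n \;=\; \binom{n}{d_A}\Big(\!\int_A \ell_A^{d_A}\!\Big)\Big(\!\int_B \ell_B^{d_B}\!\Big) > 0
\]
by the HR$^0$ halves of the hypotheses, which simultaneously gives HL$^0$ and HR$^0$ at $\ell$.

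For degree $1$, binomially expand $\ell^{n-2}$ and project to top degree to derive, for $\alpha,\alpha' \in A^1$ and $\beta,\beta'\in B^1$, formulas of the shape
\[
\begin{aligned}
Q_\ell^1(\alpha\otimes 1,\alpha'\otimes 1) &= c_{AA}\, Q_{\ell_A}^1(\alpha,\alpha'),\\
Q_\ell^1(1\otimes \beta,1\otimes \beta') &= c_{BB}\, Q_{\ell_B}^1(\beta,\beta'),\\
Q_\ell^1(\alpha\otimes 1, 1\otimes \beta) &= c_{AB}\, L_A(\alpha)\, L_B(\beta),
\end{aligned}
\]
where $c_{AA},c_{BB},c_{AB}$ are positive constants (products of binomials in $n-2$ with $\int_A \ell_A^{d_A}$ or $\int_B \ell_B^{d_B}$) and $L_A(\alpha) := \int_A \alpha\ell_A^{d_A-1}$, $L_B(\beta) := \int_B \beta\ell_B^{d_B-1}$. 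Since $P_{\ell_A}^1 = \ker L_A$, the cross-blocks from $P_{\ell_A}^1$ to $P_{\ell_B}^1$ and to $\RR\ell_B$ vanish; and the first formula applied to $\alpha \in P_{\ell_A}^1$ and $\ell_A$ reduces to $L_A(\alpha) = 0$, so the $P_{\ell_A}^1$--$\RR\ell_A$ cross-block also vanishes. By symmetry $Q_\ell^1$ is block-diagonal in the four-term decomposition above.

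On the $P_{\ell_A}^1$ and $P_{\ell_B}^1$ blocks, $Q_\ell^1$ is a positive multiple of the negative-definite forms $Q_{\ell_A}^1|_{P_{\ell_A}^1}$ and $Q_{\ell_B}^1|_{P_{\ell_B}^1}$ supplied by HR$^1_{\ell_A}$ and HR$^1_{\ell_B}$. On $\RR\ell_A\oplus\RR\ell_B$ the matrix of $Q_\ell^1$ is
\[
\Big(\!\int_A \ell_A^{d_A}\!\Big)\Big(\!\int_B \ell_B^{d_B}\!\Big)\cdot \begin{pmatrix} \binom{n-2}{d_A-2} & \binom{n-2}{d_A-1} \\ \binom{n-2}{d_A-1} & \binom{n-2}{d_A} \end{pmatrix}
\]
(with $\binom{m}{k}=0$ for $k\notin\{0,\dots,m\}$), whose determinant is strictly negative by log-concavity of binomial coefficients and whose $Q_\ell^1(\ell,\ell)$-value is $\int_{A\otimes B}\ell^n>0$; so this block has signature $(+,-)$. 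Assembling, $Q_\ell^1$ is non-degenerate of signature $(1,\dim(A\otimes B)^1-1)$. Non-degeneracy is HL$^1$; since $\RR\ell$ is the unique positive-definite direction and $P_\ell^1$ is its $Q_\ell^1$-orthogonal complement, $-Q_\ell^1|_{P_\ell^1}$ is positive definite, which is HR$^1$.

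The only non-routine step is the signature of the $2\times 2$ block on $\RR\ell_A\oplus \RR\ell_B$, which reduces to the strict log-concavity $\binom{n-2}{d_A-1}^2 > \binom{n-2}{d_A-2}\binom{n-2}{d_A}$. Edge cases ($d_A=1$ or $d_B=1$) zero out a corner entry but leave the anti-diagonal positive, so the determinant remains negative and the argument goes through unchanged. This $2\times 2$ calculation is the combinatorial shadow of the Hodge index theorem on the two-dimensional space spanned by two nef classes on a product, and it is precisely why the HR$^{\leq 1}$ input on each factor suffices to propagate HR$^{\leq 1}$ to the tensor product without needing higher-degree information.
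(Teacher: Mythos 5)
Your proof is correct and takes essentially the same route as the paper's: both expand $\ell^{n-2}$ binomially in the basis $P_{\ell_A}^1 \oplus \RR\ell_A \oplus P_{\ell_B}^1 \oplus \RR\ell_B$, check that the off-diagonal blocks vanish, feed $\operatorname{HR}^1_{\ell_A}$ and $\operatorname{HR}^1_{\ell_B}$ into the two primitive blocks, and reduce the signature count to the $2\times 2$ matrix $\begin{pmatrix}\binom{n-2}{d_A-2}&\binom{n-2}{d_A-1}\\ \binom{n-2}{d_A-1}&\binom{n-2}{d_A}\end{pmatrix}$ on $\RR\ell_A\oplus\RR\ell_B$, noting its determinant is negative by strict log-concavity of binomials. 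You are marginally more explicit than the paper about why the cross-blocks against the primitive parts vanish and about how the edge cases $d_A=1$ or $d_B=1$ are absorbed, but these are elaborations of the same argument, not a different one.
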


Before giving the proof, we remark that if $d_A =0$ then $(A\otimes B)^\bullet \simeq B^\bullet$ (likewise if $d_B = 0$) so that the statement in the proposition is trivially satisfied after suitable modifications.  If $d_A= 1$ (or $d_B = 1$), so that $\operatorname{HR}_{\ell_A}^1$ (or $\operatorname{HR}_{\ell_B}^1$) is vacuous, our proof below implies that $(A^\bullet \otimes B^\bullet)$ satisfies $\operatorname{HR}_{\ell_A \otimes 1 + 1 \otimes \ell_B}^{1}$ if $\operatorname{HR}_{\ell_A}^0$ and $\operatorname{HR}_{\ell_B}^0$ are satisfied.

\begin{proof}
Set $\ell := \ell_A \otimes 1 + 1 \otimes \ell_B$.  First, note that $\operatorname{HR}^0_{\ell}$ follows easily from the description of the Poincar\'e duality algebra $(A\otimes B)^\bullet$ in Proposition \ref{prop:prodpoincare}.(1).  Now, we verify that $(A \otimes B)^\bullet$ satisfies $\operatorname{HR}^1_\ell$.   Let $v_1, \dots v_m$ and $w_1, \dots w_n$ be orthonormal bases for $P_{\ell_A}^1$ and $P_{\ell_B}^1$, respectively. Then
$$A^1 \cong \bigoplus_{i=1}^m \langle v_i \rangle \oplus \langle \ell_A \rangle \quad\textnormal{and}\quad B^1 \cong \bigoplus_{i=1}^n \langle w_i \rangle \oplus \langle \ell_B \rangle.$$

Noting that $(A\otimes B)^\bullet$ is a Poincar\'e duality algebra of dimension $d = d_A + d_B$, we expand
$$\ell^{d-2}= ((\ell_A \otimes 1)+(1 \otimes \ell_B))^{d-2} = \sum_{i=0}^{d-2} {d-2 \choose i}(\ell_A^{i} \otimes \ell_B^{d-i-2}).$$
The symmetric matrix for $Q_{\ell}^1$ with respect to the above basis is given by

 \[ Q_{\ell}^1(a,b) = \begin{cases} 
          -{d-2 \choose d_A} & a= b = (v_i\otimes 1) \\
          
          -{d-2 \choose d_B} & a= b = (1 \otimes w_j) \\
          \lambda{d-2 \choose d_A-2} & a = b = (\ell_A \otimes 1) \\
          \lambda{d-2 \choose d_A} & a=b=(1 \otimes \ell_B) \\
          \lambda{d-2 \choose d_A-1} & a = (\ell_A \otimes 1) \,\,{\rm and}\,\, b= (1 \otimes \ell_B) \\
          \lambda{d-2 \choose d_B-1} & a = (1 \otimes \ell_B) \,\,{\rm and}\,\, b= (\ell_A \otimes 1) \\
          0 & a= (v_i\otimes 1) \,\,{\rm and}\,\, b = (1 \otimes w_j) \,\,{\rm or}\,\, (1 \otimes \ell_B) \\
          0 & a= (v_i\otimes 1) \, \,{\rm or}\,\, (\ell_A \otimes 1) \,\,{\rm and}\,\,   b = (1 \otimes w_j) \\
       \end{cases}
    \]
    where $\lambda := (\int_A\ell_A^{d_A})(\int_B \ell_B^{d_B})$.
    
\medskip
So the matrix $Q_{\ell}^1(a,b)$ is a block matrix comprised of 3 blocks.  By $\operatorname{HR}^1_{\ell_A}$ and $\operatorname{HR}^1_{\ell_B}$, the first two blocks are negative identity matrices induced by $\{(v_i\otimes 1)\}\times\{(v_i\otimes 1)\} $ and $\{(1 \otimes w_j)\}\times \{(1 \otimes w_j)\}$.  The third and only nontrivial block is induced by $\{(\ell_A \otimes 1), (1 \otimes \ell_B)\} \times \{(\ell_A \otimes 1), (1 \otimes \ell_B)\}$, which gives the $2\times2$ matrix $$M = \lambda \begin{bmatrix}
    {d-2 \choose d_A-2} & {d-2 \choose d_A-1} \\
    {d-2 \choose d_A-1} & {d-2 \choose d_A}
\end{bmatrix}.$$
One computes that $\det(M) < 0$, and hence $M$ has signature $(+,-)$.  We conclude that $Q_{\ell}^1(a,b)$ is nondegenerate and has exactly one positive eigenvalue completing the proof.
\end{proof}

\begin{prop}\label{localHIT}
Let $(A^\bullet = \RR[x_1, \ldots, x_s]/I, \int)$ be a Poincar\'e duality algebra of dimension $d$, and let $\ell \in A^1$ be an effective divisor---that is, a non-negative linear combination of $\{x_1, \ldots, x_s\}$.  Denote by $\ell_k$ the image of $\ell$ in $A^\bullet/\operatorname{ann}(x_k)$.  For $0\leq i \leq \lfloor \frac{d-1}{2}\rfloor$, if $(A^\bullet/\operatorname{ann}(x_k), \int_{x_k})$ satisfies $\operatorname{HR}^i_{\ell_k}$ for every $k = 1, \ldots, s$, then $(A^\bullet, \int)$ satisfies $\operatorname{HL}^i_{\ell}$.
\end{prop}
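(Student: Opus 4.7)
The plan is to show injectivity of the Lefschetz operator $L_\ell^i \colon A^i \to A^{d-i}$; since $A^\bullet$ is a Poincar\'e duality algebra of dimension $d$, we have $\dim_\RR A^i = \dim_\RR A^{d-i}$, so injectivity alone will yield that $L_\ell^i$ is an isomorphism. Write $\ell = \sum_{k=1}^s c_k x_k$ with $c_k \geq 0$, and for each $k$ let $\bar{a}_k$ denote the image of $a \in A^i$ in the quotient $A_k^\bullet := A^\bullet / \operatorname{ann}(x_k)$, which by Proposition \ref{prop:transport} is a Poincar\'e duality algebra of dimension $d-1$ with degree map $\int_{x_k}(\bar b) = \int(x_k b)$.

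The key computation is the identity
\[
(-1)^i Q_\ell^i(a, a) \;=\; (-1)^i \int a^2 \ell^{d-2i} \;=\; \sum_{k=1}^s c_k \cdot (-1)^i \int x_k a^2 \ell^{d-1-2i} \;=\; \sum_{k=1}^s c_k \cdot (-1)^i Q_{\ell_k}^i(\bar{a}_k, \bar{a}_k),
\]
obtained by writing $\ell^{d-2i} = \ell \cdot \ell^{d-1-2i}$, distributing the outer $\ell$ as a sum over the generators, and invoking $\int(x_k b) = \int_{x_k} \bar b$ together with the fact that $A_k^\bullet$ has dimension $d-1$, so $\ell_k^{(d-1)-2i}$ is the correct power appearing in $Q^i_{\ell_k}$.

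Now suppose $a \in \ker L_\ell^i$, i.e., $\ell^{d-2i} a = 0$. Then $Q_\ell^i(a, a) = \int a \cdot (\ell^{d-2i} a) = 0$, so the left-hand side of the identity above vanishes. Reducing $\ell^{d-2i} a = 0$ modulo $\operatorname{ann}(x_k)$ gives $\ell_k^{(d-1) - 2i + 1} \bar{a}_k = 0$, so $\bar{a}_k$ lies in the primitive space $P_{\ell_k}^i$ of the $(d-1)$-dimensional algebra $A_k^\bullet$. The hypothesis $\operatorname{HR}_{\ell_k}^i$ then ensures $(-1)^i Q_{\ell_k}^i(\bar{a}_k, \bar{a}_k) \geq 0$ for each $k$, with equality precisely when $\bar{a}_k = 0$. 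Since each summand in the key identity is non-negative (as $c_k \geq 0$) and the total is zero, each summand vanishes; hence $x_k a = 0$ for every $k$ with $c_k > 0$.

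The final step is to conclude $a = 0$. When $\ell$ has full support (i.e., $c_k > 0$ for all $k$, which is the case relevant to ample classes), the previous step gives $x_k a = 0$ for every generator $x_k$; since $A^\bullet$ is generated in degree one, it follows that $A^{d-i} \cdot a = 0$, and Poincar\'e duality together with $i \leq (d-1)/2 < d$ then yields $a = 0$. The main obstacle is precisely this last step when $\ell$ is only weakly effective: one then either restricts attention to the subalgebra generated by $\{x_k : c_k > 0\}$ or strengthens the hypothesis so that $\ell$ lies in the interior of the effective cone, which is exactly the regime in which the proposition will be applied in \Cref{section:HIT}.
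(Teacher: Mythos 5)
Your proof is essentially identical to the paper's: reduce to each $A^\bullet/\operatorname{ann}(x_k)$, use $\operatorname{HR}^i_{\ell_k}$ to show each summand of $0 = \int \ell^{d-2i} f^2 = \sum_k c_k \int_{x_k} \ell_k^{d-1-2i} f_k^2$ has a fixed sign and hence vanishes, then invoke degree-one generation and Poincar\'e duality to conclude $f = 0$. The support concern you raise at the end is genuine and the paper itself glosses over it (the argument only forces $x_k f = 0$ for $k$ with $c_k > 0$, yet the paper writes ``for all $k$''); as you observe, this is harmless in the intended application since combinatorially ample $\ell$ can be written with all $c_k > 0$.
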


\begin{proof}
Let $\ell = \sum_{k=1}^s c_{k}x_{k}$ with $c_{k} \in \mathbb{R}_{\geq 0}$, and suppose $\ell^{d-2i}f=0$ for some $f \in A^i$.  We will show that $f =0$ necessarily.  Let $f_{i}$ the image of $f$ in $A^\bullet/\operatorname{ann}(x_i)$.  As $0 = \ell^{d-2i}f$, we have $0= \ell_{k}^{d-2i}f_{k}$, and because $A^\bullet/\operatorname{ann}(x_k)$ is a Poincar\'e duality algebra of dimension $d-1$, we conclude that $f_{k}$ belongs to the primitive space $P_{\ell_{k}}^i$.  Now, we note that for all $k$,
$$\textstyle 0 = \int \ell^{d-2i}f^2 = \int (\sum_{k}c_{k} x_k)\ell^{d-2i-1}f^2 = \sum_{k} (\int_{x_k} c_{k} \ell_{k}^{d-2i-1}f_{k}^2) \quad \textnormal{and} \quad \int_{x_k} c_k\ell_k^{d-2i-1}f_k^2 \leq 0$$
where the last inequalities follow from $\operatorname{HR}_{\ell_k}^i$.  Moreover, as $Q_{\ell_k}^1$ is negative-definite on $P_{\ell_k}^i$, we conclude each $f_{k}$ to be 0, that is, $x_k f = 0$ for all $k = 1, \ldots, s$.  Since $\{x_1, \ldots, x_s\}$ generate $A^\bullet$, the Poincar\'e duality property of $A^\bullet$ implies that if $f\neq 0$ then there exists a polynomial $g(x)$ of degree $d-i$ such that $\int g(x)f \neq 0$, and hence we conclude that $f = 0$.
\end{proof}

\subsection{K\"ahler package in degree at most one for matroids} We now specialize our discussion to the setting of matroids, and establish K\"ahler package in degree at most one for Chow rings of matroids.  As a consequence, we recover the proof of Heron-Rota-Welsh conjecture as in \cite{AHK18}.

\begin{thm}\label{thm:HRone}
The Chow ring of a matroid $(A^\bullet(M), \int_M, \mathscr K_M)$ satisfies $\operatorname{HL}_{\mathscr K_M}^{\leq 1}$ and $\operatorname{HR}_{\mathscr K_M}^{\leq 1}$.
\end{thm}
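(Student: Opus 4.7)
I would proceed by a single induction on the rank $r = d+1$ of $M$, with the base cases $r \leq 2$ immediate. For the inductive step, assume Theorem \ref{thm:HRone} holds for all loopless matroids of rank strictly less than $r$. The strategy has three steps: (1) reduce $\operatorname{HL}^{\leq 1}_{\mathscr K_M}$ to Hodge--Riemann on the stars of $\Sigma_M$, which split as products of smaller Bergman fans; (2) harvest $\operatorname{HR}^{\leq 1}$ on the subcone $\mathscr K_M^\nabla$ from the Lorentzian property of $VP_M^\nabla$; and (3) propagate $\operatorname{HR}^{\leq 1}$ from $\mathscr K_M^\nabla$ to all of $\mathscr K_M$ by a signature-continuity argument.

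For step (1), I would use the identification $\operatorname{star}(u_F,\Sigma_M) \cong \Sigma_{M|_F}\times\Sigma_{M/F}$ for each proper nonempty flat $F$ (noting that $M/F$ is loopless since $F$ is a flat), which gives
\[
A^\bullet(\operatorname{star}(u_F,\Sigma_M)) \;\cong\; A^\bullet(M|_F)\otimes A^\bullet(M/F).
\]
Combined with Corollary \ref{cor:poincare} and Proposition \ref{prop:starann}, this identifies this tensor product with $A^\bullet(M)/\operatorname{ann}(x_F)$. Given $\ell \in \mathscr K_M$ realized by a strictly submodular $c\colon 2^E\to\ZZ$ with $c(\emptyset)=c(E)=0$, I would first modify $c$ by a linear function---which does not change the divisor class and preserves strict submodularity---so as to arrange $c(F)=0$. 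Then the restriction $c|_{2^F}$ and the contraction $S\mapsto c(S\cup F)$ remain strictly submodular and vanish at the empty set and at the top, so the image of $\ell$ in the quotient is exactly $\ell_A\otimes 1 + 1\otimes \ell_B$ for some $\ell_A\in\mathscr K_{M|_F}$ and $\ell_B\in\mathscr K_{M/F}$. By the inductive hypothesis each factor satisfies $\operatorname{HR}^{\leq 1}$ at its ample class, and Proposition \ref{HITtens} then gives $\operatorname{HR}^{\leq 1}_{\ell_A\otimes 1 + 1\otimes \ell_B}$ on the tensor product. Since $\ell$ is effective in the classical presentation by Remark \ref{rem:combnefeff}, Proposition \ref{localHIT} now yields $\operatorname{HL}^{\leq 1}_\ell$ for $A^\bullet(M)$. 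Varying $\ell$ delivers $\operatorname{HL}^{\leq 1}_{\mathscr K_M}$.

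Step (2) is short: $VP_M^\nabla$ is Lorentzian by Theorem \ref{thm:VPLorentzian}, and $\operatorname{HL}^{\leq 1}$ already holds on $\mathscr K_M^\nabla \subseteq \mathscr K_M$ by step (1), so Proposition \ref{prop:logconcHR} applied to the simplicial presentation yields $\operatorname{HR}^{\leq 1}$ on the interior of $\mathscr K_M^\nabla$. For step (3), pick any $\ell_0$ in the interior of $\mathscr K_M^\nabla$, which lies in $\mathscr K_M$; then $\operatorname{HR}^{\leq 1}_{\ell_0}$ holds by step (2) and $\operatorname{HL}^{\leq 1}_{\mathscr K_M}$ holds by step (1), so Proposition \ref{fullHIT} extends $\operatorname{HR}^{\leq 1}$ to all of $\mathscr K_M$, closing the induction.

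The main obstacle will be the bookkeeping in step (1): concretely justifying the normalization $c(F)=0$ and checking that restriction and contraction of a strictly submodular function remain strictly submodular on the incomparable flat-pairs of $M|_F$ and $M/F$ respectively, so that the image of $\ell$ under the isomorphism truly lands at an ample-ample sum in the tensor product where Proposition \ref{HITtens} applies. The degenerate cases where $F$ is an atom or a hyperplane make one of $A^\bullet(M|_F)$ or $A^\bullet(M/F)$ one-dimensional; these are absorbed by the $d_A=0$ or $d_B=0$ conventions noted after Proposition \ref{HITtens}.
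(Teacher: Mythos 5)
Your proposal follows essentially the same route as the paper's own proof: a single induction on rank, with $\operatorname{HL}^{\leq 1}$ obtained from Proposition~\ref{localHIT} after decomposing the stars via Lemma~\ref{lem:bergmanprod} and Proposition~\ref{HITtens}, and $\operatorname{HR}^{\leq 1}$ obtained by anchoring at a point of $\mathscr K_M^\nabla$ using the Lorentzian property (Theorem~\ref{thm:VPLorentzian}, Proposition~\ref{prop:logconcHR}) and then propagating by Proposition~\ref{fullHIT}. The only substantive difference is order of exposition: the paper first observes (using Lemma~\ref{lem:lorentzianHR} and Proposition~\ref{fullHIT}) that proving $\operatorname{HL}^{\leq 1}_{\mathscr K_M}$ suffices, and then establishes $\operatorname{HL}$; you establish $\operatorname{HL}$ first and then run steps (2) and (3) explicitly. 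Your elaboration on why restriction and contraction of a strictly submodular function (after translating by a linear functional so $c(F)=0$) remain strictly submodular is correct and is exactly the content the paper compresses into Lemma~\ref{lem:bergmanprod}(2).

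One small imprecision worth flagging: you take the base case to be $r\leq 2$, but your step (1) cannot actually produce $\operatorname{HL}^1$ when $r=3$. Proposition~\ref{localHIT} is stated only for $0\leq i\leq\lfloor(d-1)/2\rfloor$, which for $d=2$ excludes $i=1$ (and indeed the proof needs $\int_{x_k}\ell_k^{d-2i-1}f_k^2$, requiring $d-2i-1\geq 0$). The paper therefore includes rank $\leq 3$ in its base case and notes (Lemma~\ref{lem:lorentzianHR}) that for rank $3$ the form $Q_\ell^1$ is the Poincar\'e pairing, so $\operatorname{HL}^1$ holds automatically. In your write-up you should either push the base case up to $r\leq 3$ or note that for $d=2$ the Lefschetz operator $L_\ell^1\colon A^1\to A^{d-1}$ is multiplication by $\ell^0=1$, hence the identity; otherwise the assertion "Proposition~\ref{localHIT} yields $\operatorname{HL}^{\leq 1}_\ell$" is not literally licensed by that proposition at the bottom rung of the induction.
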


We will prove the theorem by induction on the rank of the matroid.  The key combinatorial observation that allows one to reduce the rank is the following, adapted from \cite[Proposition 3.5]{AHK18}.  It underlies the well-known Hopf algebraic structure for the lattice of flats of a matroid; see \cite{KRS99, AA17} for a detailed discussion of Hopf algebraic structures for matroids.

\begin{lem}\label{lem:bergmanprod}
Let $M$ be a loopless matroid, and $F$ a nonempty proper flat of $M$.  Let $\rho_F$ be the ray corresponding to $F$ in the Bergman fan $\Sigma_M$ of $M$.  We have
\begin{enumerate}
\item $\operatorname{star}(\Sigma_M, \rho_F) \simeq \Sigma_{M|F} \times \Sigma_{M/F}$, and consequently,
\item an isomorphism of Poincar\'e duality algebras $A^\bullet(M) / \operatorname{ann}(x_F)\simeq  (A(M|F)\otimes A(M/F))^\bullet$ such that if $\ell \in \mathscr K_M$ then its image in $A^\bullet(M)/\operatorname{ann}(x_F)$ is in $(\mathscr K_{M|F} \otimes 1) \oplus (1\otimes \mathscr K_{M/F})$.
\end{enumerate}
\end{lem}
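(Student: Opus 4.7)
The plan is to prove Part (1) by direct inspection of the cones, and then deduce Part (2) from Part (1) together with already-established Poincar\'e duality results, augmented by a polytope argument for the ample decomposition.

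For Part (1), I would work directly from the description of $\Sigma_M$ in \Cref{prop:bergman}(2). The cones of $\Sigma_M$ containing $\rho_F$ correspond bijectively to chains of nonempty proper flats of $M$ that pass through $F$, i.e., chains of the form
\[
\emptyset \subsetneq F_1 \subsetneq \cdots \subsetneq F_i \subsetneq F \subsetneq F_{i+1} \subsetneq \cdots \subsetneq F_k \subsetneq E.
\]
Such a chain splits canonically into the two pieces below and above $F$: the flats $F_j \subsetneq F$ are precisely the nonempty proper flats of $M|F$, while the flats $F_j \supsetneq F$, viewed as $F_j/F$, are precisely the nonempty proper flats of $M/F$. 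On the level of ambient lattices, the quotient $N/\mathbb Z u_F$ splits as a direct sum of the lattices for $M|F$ and $M/F$, and primitive ray vectors go to primitive ray vectors under this split. This produces the claimed fan isomorphism $\operatorname{star}(\rho_F,\Sigma_M) \simeq \Sigma_{M|F}\times \Sigma_{M/F}$.

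For the ring isomorphism in Part (2), the Chow cohomology of a product of smooth fans is the tensor product of the Chow cohomologies of the factors, so Part (1) gives $A^\bullet(\operatorname{star}(\rho_F,\Sigma_M)) \simeq A^\bullet(M|F) \otimes A^\bullet(M/F)$. Each factor is a Poincar\'e duality algebra by \Cref{cor:poincare}, so the tensor product is a Poincar\'e duality algebra by \Cref{prop:prodpoincare}(1). Combined with Poincar\'e duality for $A^\bullet(M)$ itself (also \Cref{cor:poincare}), \Cref{prop:starann} then yields the isomorphism
\[
A^\bullet(M)/\operatorname{ann}(x_F) \simeq A^\bullet(M|F)\otimes A^\bullet(M/F).
\]

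For the ample decomposition, I would pass to polytopes. Any $\ell \in \mathscr K_M$ is the pullback along $\iota_M$ of an ample divisor $D$ on $\Sigma_{A_n}$, corresponding via \Cref{thm:nefdef} and \Cref{prop:genperm} to a generalized permutohedron $P_D$ whose normal fan is exactly $\Sigma_{A_n}$. The face of $P_D$ picked out by the ray $\rho_F$, whose normal fan is $\operatorname{star}(\rho_F,\Sigma_{A_n}) = \Sigma_{A_{|F|-1}}\times \Sigma_{A_{|E\setminus F|-1}}$, splits (via the submodular description) as an affine translate of a product $P'\times P''$, where $P'$ is the generalized permutohedron on $F$ with submodular function $S \mapsto c_S$ for $S\subseteq F$ and $P''$ is the generalized permutohedron on $E\setminus F$ with submodular function $T\mapsto c_{T\cup F}-c_F$. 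Strict submodularity of $c$ transfers to each piece, so $P'$ and $P''$ are ample on their respective braid fans. Pulling these back through $\iota_{M|F}$ and $\iota_{M/F}$ produces combinatorially ample classes on $M|F$ and $M/F$, showing that the image of $\ell$ lies in $(\mathscr K_{M|F}\otimes 1) \oplus (1\otimes \mathscr K_{M/F})$.

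I expect the main obstacle to be the last paragraph: Part (1) is essentially a short chain-of-flats bookkeeping, and the ring-isomorphism portion of Part (2) assembles mechanically from Poincar\'e duality for $A^\bullet(M)$, $A^\bullet(M|F)$, and $A^\bullet(M/F)$. The delicate step is tracking the face decomposition of $P_D$ through the normal-fan/polytope dictionary and confirming that \emph{strict} submodularity passes from $c$ to its restriction on $F$ and its ``contraction'' $T\mapsto c_{T\cup F}-c_F$, so that ``ample'' (as opposed to merely ``nef'') is preserved, and that this agrees with the ring isomorphism identified above.
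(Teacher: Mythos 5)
Your proof is correct and follows essentially the same route as the paper: part (1) via the chain-of-flats factorization, the ring isomorphism via Corollary~\ref{cor:poincare}, Proposition~\ref{prop:prodpoincare}, Proposition~\ref{prop:starann}, and the Chow ring of a product fan, and the ample decomposition by showing that (strict) submodularity passes to the restriction $S\mapsto c_S$ on $F$ and the contraction $T\mapsto c_{T\cup F}-c_F$ on $E\setminus F$. The only difference is cosmetic: where the paper disposes of the last point in one line (``restriction of submodular functions on lattices remain submodular under restriction to intervals''), you spell it out through the normal-fan/polytope face dictionary, which also makes explicit the strictness of submodularity on each piece.
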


\begin{proof}
A face of $\Sigma_M$ is in $\operatorname{star}(\Sigma_M,F)$ if and only if it corresponds to a flag of flats which contains $F$.  Any such flag naturally factors as the concatenation of two flags, one with maximal element strictly contained in $F$, and the other with minimal element $F$.  This geometrically corresponds to the factorization of fans in the statement (1).  For the second statement (2), first note that $M|F$ and $M/F$ are loopless since $F$ is a flat.  Then, combine Proposition \ref{prop:prodpoincare} and Proposition \ref{prop:starann} with the easily verifiable fact that $A^\bullet(\Sigma \times \Sigma') \simeq (A(\Sigma) \otimes A(\Sigma'))^\bullet$ for rational fans $\Sigma$ and $\Sigma'$.  This proves the isomorphism $A^\bullet(M)/\operatorname{ann}(x_F) \simeq (A(M|F) \otimes A(M/F))^\bullet$.  Lastly, the statement about $\ell \in \mathscr K_M$ follows from the fact that restriction of submodular functions on lattices remain submodular under restriction to intervals in the lattice.
\end{proof}

The remaining key part of the induction in the proof of Theorem \ref{thm:HRone}, in light of Proposition \ref{fullHIT}, is to establish $\operatorname{HR}_\ell^1$ for some divisor $\ell \in \mathscr K_M$.  In \cite{AHK18} the authors employ the method of order filters and flips for this purpose; in our case, the Lorentzian property of the volume polynomial provides the desired key step.

\begin{lem}\label{lem:lorentzianHR}
Let $M$ be a loopless matroid of rank $r = d+1\geq 2$, and recall that $\mathscr K_M^\nabla$ is the interior of the cone generated by the nontrivial simplicial generators of $\myChow^\bullet(M)$.  (It is a subcone of $\mathscr K_M$.)  For any $\ell\in \mathscr K_M^\nabla$, we have $\int_M \ell^d > 0$.  If further $r = d+1 \geq 3$, then the form $Q_\ell^1$ has exactly one positive eigenvalue.  In particular, $\operatorname{HR}_{\mathscr K_M}^{1}$ is satisfied for $M$ with rank $3$.
\end{lem}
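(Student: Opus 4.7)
The plan is to handle the three assertions of the lemma in sequence, each pulling from a distinct input of the preceding machinery.

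First, to establish positivity of $\int_M \ell^d$ on $\mathscr K_M^\nabla$, I would apply the dragon Hall--Rado formula directly. Writing $\ell = \sum_{F \in \mathscr L_M^{\geq 2}} c_F h_F$ with each $c_F > 0$, \Cref{cor:volpolH} expresses $\int_M \ell^d = VP_M^\nabla(c)$ as a sum over ordered collections of flats satisfying $\DHR(M)$, with all coefficients nonnegative. The constant collection $(E, E, \ldots, E)$ is such a collection since $\operatorname{rk}_M(E) = d + 1 \geq |J| + 1$ for every $\emptyset \neq J \subseteq \{1, \ldots, d\}$, so the monomial $c_E^d$ appears with a positive coefficient and forces $\int_M \ell^d > 0$.

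For the single-positive-eigenvalue statement when $r \geq 3$, I would route through the Lorentzian / log-concavity / Hodge--Riemann dictionary. \Cref{thm:VPLorentzian} gives that $VP_M^\nabla$ is Lorentzian, hence by \Cref{thm:logconc}(1) the volume function is log-concave on $\mathscr K_M^\nabla$, and the first step supplies $\operatorname{HL}^0_{\mathscr K_M^\nabla}$ and $\operatorname{HR}^0_{\mathscr K_M^\nabla}$ (both of which reduce to $\int_M \ell^d > 0$). The equivalence (1)~$\Leftrightarrow$~(2) of \Cref{prop:logconcHR} then delivers the eigenvalue conclusion.

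For the rank-$3$ extension to $\operatorname{HR}^1_{\mathscr K_M}$, I would exploit the accident that when $d = 2$ the Lefschetz operator $L^1_\ell \colon A^1(M) \to A^1(M)$ is multiplication by $\ell^{d-2} = 1$, i.e., the identity, so $\operatorname{HL}^1_{\mathscr K_M}$ holds trivially. Picking any $\ell_0 \in \mathscr K_M^\nabla \subseteq \mathscr K_M$, the form $Q^1_{\ell_0} = \int_M xy$ is independent of $\ell_0$, is non-degenerate by Poincar\'e duality (\Cref{cor:poincare}), and (by the second step) has exactly one positive eigenvalue, hence signature $(1, \dim A^1(M) - 1)$. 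Since $\int_M \ell_0^2 > 0$ places $\ell_0$ in the positive direction, its orthogonal complement $\ell_0^\perp = P^1_{\ell_0}$ is negative definite, which is precisely $\operatorname{HR}^1_{\ell_0}$. Applying \Cref{fullHIT} to the convex cone $\mathscr K_M$ then spreads $\operatorname{HR}^1$ to every point of $\mathscr K_M$.

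The main conceptual point is that the Lorentzian property alone does not yield HR, and \Cref{prop:logconcHR} is the bridge; its hypotheses $\operatorname{HL}^0$ and $\operatorname{HR}^0$ come cheaply from the explicit DHR formula. The rank-$3$ bootstrapping is then a happy by-product of $L^1_\ell$ being the identity map in that single dimension.
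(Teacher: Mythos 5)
Your proposal is correct and follows essentially the same route as the paper's proof: positivity of $\int_M \ell^d$ via the dragon Hall--Rado formula, the one-positive-eigenvalue claim via Theorem \ref{thm:VPLorentzian} together with Proposition \ref{prop:logconcHR}, and the rank-$3$ extension via the observation that $Q^1_\ell$ coincides with the Poincar\'e pairing and is therefore $\ell$-independent. Your final step is actually a bit more scrupulous than the paper's terse ``non-degenerate and independent of $\ell$'': you explicitly verify $\operatorname{HR}^1_{\ell_0}$ for a single $\ell_0 \in \mathscr K_M^\nabla$ (using $\int_M \ell_0^2 > 0$ to place $\ell_0$ in the positive direction of the fixed signature-$(1,\dim A^1 - 1)$ form) and then invoke Proposition \ref{fullHIT} to spread the relation to all of $\mathscr K_M$, which cleanly fills in a detail the paper leaves implicit.
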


\begin{proof}
The statement $\int_M \ell^d > 0$ follows from our dragon Hall-Rado formula (Corollary \ref{cor:volpolH}).  When the rank of $M$ is at least 3, that the form $Q_\ell^1$ has exactly one positive eigenvalue follows from combining Theorem \ref{thm:VPLorentzian} and Proposition \ref{prop:logconcHR}.  Lastly, when rank of $M$ equals 3, the bilinear form $Q_\ell^1$ is the Poincar\'e pairing $A^1(M)\times A^{2-1}(M) \to A^2(M)$, which is non-degenerate and independent of $\ell \in \mathscr K_M^\nabla$.  Thus, we conclude $\operatorname{HR}_{\ell'}^1$ for any element $\ell' \in \mathscr K_M$ since $Q_{\ell'}^1=Q_\ell^1$.
\end{proof}

\begin{proof}[Proof of Theorem \ref{thm:HRone}]
We proceed by induction on the rank of the matroid $M$.  The base case consists of rank $\leq 2$ matroids for $\operatorname{HL}_{\mathscr K_M}^0 \& \operatorname{HR}_{\mathscr K_M}^0$ and rank $\leq 3$ for $\operatorname{HL}_{\mathscr K_M}^1 \&\operatorname{HR}_{\mathscr K_M}^1$.  For these cases, the stated properties are either vacuous or easily verified with \Cref{lem:lorentzianHR}.  Let $M$ now be a loopless matroid of rank $r = d+1$ on a ground set $E$.

Observe that both properties $\operatorname{HL}_{\mathscr K_M}^{ 0}$ and $\operatorname{HR}_{\mathscr K_M}^{ 0}$ hold together if and only if $\int_M \ell^d > 0$ for all $\ell \in \mathscr K_M$. Moreover, given $\operatorname{HL}_\ell^1$, the property $\operatorname{HR}_\ell^1$ holds if and only if $Q_\ell^1$ has exactly one positive eigenvalue.
Combined with Lemma \ref{lem:lorentzianHR} and Proposition \ref{fullHIT}, these facts imply that proving $\operatorname{HL}^{\leq 1}_{\mathscr K_M}$ is sufficient to establish $\operatorname{HR}^{\leq 1}_{\mathscr K_M}$.
 By Remark \ref{rem:combnefeff}, any element $\ell \in \mathscr K_M$ can be written as a non-negative linear combination of $\{x_F \ | \ F\in \mathscr L_M\setminus\{\emptyset,E\}\}$; therefore, by Proposition \ref{localHIT}, to establish  $\operatorname{HL}_{\mathscr K_M}^{\leq 1}$, it suffices in turn to prove $\operatorname{HR}^{\leq 1}$ for $A^\bullet(M)/\operatorname{ann}(x_F)$ for every nonempty proper flat $F$.
 Finally, $A^\bullet(M) / \operatorname{ann}(x_F) \simeq (A(M|F) \otimes A(M/F))^\bullet$ by Lemma \ref{lem:bergmanprod}(2), so by the induction hypothesis and Proposition \ref{HITtens}, the proof is complete.
 \end{proof} 
 
We conclude by recounting the argument in \cite[\S9]{AHK18} that the Hodge-Riemann relations in degree one implies the Heron-Rota-Welsh conjecture.

\begin{lem}\cite[Lemma 9.6]{AHK18}\label{lem:nefineq}
        Let $\ell_1, \ell_2 \in A^1(M)$. If $\ell_2$ is nef, then
        \[
                \left(\int_M \ell_1^2 \ell_2^{d-2} \right) \left( \int_M \ell_2^2 \ell_2^{d-2} \right) \leq \left( \int_M \ell_1\ell_2\ell_2^{d - 2} \right)^2.
        \]
\end{lem}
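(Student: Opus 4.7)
The plan is to derive this Khovanskii--Teissier inequality as a linear-algebraic consequence of the Hodge--Riemann relations in degree $1$ established in \Cref{thm:HRone}.  I will first reduce to the case where $\ell_2$ is combinatorially ample and then apply $\operatorname{HR}^1_{\ell_2}$ to the two-dimensional subspace spanned by $\ell_1$ and $\ell_2$.

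\emph{Reduction to the ample case.}  Both sides of the inequality are polynomial, hence continuous, in $\ell_2 \in A^1(M)$.  Fix any $\ell_0 \in \mathscr K_M$ and consider the perturbation $\ell_2 \leadsto \ell_2 + \epsilon \ell_0$.  Since $\mathscr K_M$ is a convex cone and $\ell_2 \in \overline{\mathscr K_M}$, the perturbation lies in $\mathscr K_M$ for every $\epsilon > 0$.  Taking $\epsilon \to 0^+$ reduces the claim to the case $\ell_2 \in \mathscr K_M$.

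\emph{The signature argument.}  Assume $\ell_2 \in \mathscr K_M$ and consider the symmetric bilinear form $q(x,y) := \int_M xy\,\ell_2^{d-2}$ on $A^1(M)$.  By $\operatorname{HL}^1_{\ell_2}$ from \Cref{thm:HRone}, $q$ is non-degenerate, and one has the orthogonal decomposition $A^1(M) = \RR\ell_2 \oplus P^1_{\ell_2}$; by $\operatorname{HR}^1_{\ell_2}$ the form $q$ is negative definite on $P^1_{\ell_2}$; and $q(\ell_2,\ell_2) = \int_M \ell_2^d > 0$ by \Cref{lem:lorentzianHR}.  Hence $q$ has signature $(1,\dim A^1(M) - 1)$.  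Restrict $q$ to $V := \operatorname{span}(\ell_1,\ell_2)$: if $\dim V = 1$ the claim holds with equality, so assume $\dim V = 2$.  By Cauchy interlacing, $q|_V$ has at most one positive eigenvalue, and $q(\ell_2,\ell_2) > 0$ forces at least one positive eigenvalue, so $\det(q|_V) \le 0$.  Expanding this determinant in the basis $\{\ell_1,\ell_2\}$ yields
\[
\left(\int_M \ell_1^2 \ell_2^{d-2}\right)\left(\int_M \ell_2^d\right) - \left(\int_M \ell_1 \ell_2^{d-1}\right)^2 \le 0,
\]
which is precisely the inequality claimed.

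The whole argument is a linear-algebraic consequence of \Cref{thm:HRone}, together with standard Cauchy interlacing and a continuity argument at the nef boundary.  I do not anticipate a substantive obstacle; the only points that require minor care are handling the degenerate case $\ell_1 \in \RR\ell_2$, and verifying that the perturbation to make $\ell_2$ ample preserves the inequality in the limit (which is immediate, since intersection numbers are polynomial in the coefficients of $\ell_2$).
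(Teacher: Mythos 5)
Your argument is correct and follows essentially the same route as the paper's own proof: reduce to the ample case by perturbing $\ell_2$ by a small multiple of an ample class, then apply the Hodge--Riemann relations in degree $1$ to conclude that $Q_{\ell_2}^1$ restricted to $\operatorname{span}(\ell_1,\ell_2)$ has non-positive determinant. Your write-up is in fact a bit more careful than the paper's (it handles $\ell_1\in\RR\ell_2$ and states the sign of $Q_{\ell_2}^1$ on the primitive space correctly, where the paper's proof has "positive" and "negative" swapped).
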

\begin{proof}
        Suppose first that $\ell_2$ is ample.
        By Theorem \ref{thm:HRone}, $A^\bullet(M)$ satisfies $\mathrm{HL}_{\ell_2}^{\leq 1}$, so we obtain a decomposition $A^1(M) \cong \langle\ell_2\rangle  \oplus P_{\ell_2}^1$ that is orthogonal with respect to the Hodge-Riemann form $Q_{\ell_2}^1$.
        By $\mathrm{HR}_{\ell_2}^{\leq 1}$, $Q_{\ell_2}^1$ is positive definite on $P^1_{\ell_2}$ and negative definite on $\langle \ell_2\rangle$; therefore, the restriction of $Q_{\ell_2}^1$ to the subspace $\langle \ell_1, \ell_2 \rangle \subset A^1(M)$ is neither positive nor negative definite, so
        \[
                \left( \int_M \ell_1^2 \ell_2^{d-2} \right)\left( \int_M \ell_2^2 \ell_2^{d-2} \right) < \left(\int_M \ell_1 \ell_2 \ell_2^{d-2} \right).
        \]
        If $\ell_2$ is merely nef rather than ample, then for any ample element $\ell$, the class $\ell_2(t) := \ell_2 + t \ell$ is ample for all $t > 0$. An ample $\ell$ exists by Lemma \ref{lem:lorentzianHR}.
        Taking a limit as $t \to 0$ in the inequality
        \[
                \left( \int_M \ell_1^2 \ell_2(t)^{d-2} \right)\left( \int_M \ell_2(t)^2 \ell_2(t)^{d-2} \right) < \left(\int_M \ell_1 \ell_2(t) \ell_2(t)^{d-2} \right).
        \]
        yields the desired inequality.
\end{proof}

\begin{cor}\label{cor:logconc}
        For each $0 < k < d$,
        \[
                \mu^{k-1}(M) \mu^{k+1}(M) \leq \mu^k(M)^2.
        \]
\end{cor}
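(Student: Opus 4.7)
The plan is to realize each M\"obius coefficient $\mu^k(M)$ as an intersection number of two nef classes in $A^\bullet(M)$, and then reduce the log-concavity inequality $\mu^{k-1}(M)\mu^{k+1}(M) \le \mu^k(M)^2$ to a single application of the Khovanskii--Teissier-type inequality \Cref{lem:nefineq} on an appropriate principal truncation of $M$.

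Fix any element $i$ of the ground set $E$ and define two classes in $A^1(M)$,
\[
\alpha \;:=\; \sum_{\substack{F \in \mathscr L_M \setminus \{\emptyset, E\} \\ i \in F}} x_F \;=\; h_E,
\qquad
\beta \;:=\; \sum_{\substack{F \in \mathscr L_M \setminus \{\emptyset, E\} \\ i \notin F}} x_F,
\]
both of which are combinatorially nef by \Cref{prop:genperm} (the defining indicator functions are submodular, and $\alpha$ is in fact the hyperplane class). I would then invoke the intersection-theoretic identity
\[
\int_M \alpha^{k} \beta^{d-k} \;=\; \mu^k(M), \qquad 0 \le k \le d,
\]
established combinatorially by a double induction on rank and ground set size, matching the deletion-contraction recursion for $\bar\chi_M(t)$ with a corresponding recursion for intersection numbers in $A^\bullet(M)$ coming from the product decomposition \Cref{lem:bergmanprod} applied to the atom $\{i\}$ (cf.\ \cite[Proposition 9.5]{AHK18}).

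With this identity in hand, fix $0 < k < d$ and set $M' := T_E^{k-1}(M)$, the $(k-1)$-fold principal truncation of $M$ by the top flat $E$: this is a loopless matroid of rank $d-k+2$ and dimension $d' := d-k+1 \ge 2$. Iterating \Cref{thm:hyper} for the flat $E$ gives $\alpha^{k-1} \cap \Delta_M = \Delta_{M'}$, so that for every $\xi \in A^{d'}(M)$ we have $\int_M \alpha^{k-1} \xi = \int_{M'} \iota^{*} \xi$, where $\iota: \Sigma_{M'} \hookrightarrow \Sigma_M$ is the inclusion of Bergman fans. Since $\iota^{*}$ factors the pullback from $\Sigma_{A_n}$, both $\iota^{*}\alpha$ and $\iota^{*}\beta$ are combinatorially nef in $A^1(M')$. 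Applying \Cref{lem:nefineq} in $A^\bullet(M')$ with $\ell_1 := \iota^{*}\alpha$ and $\ell_2 := \iota^{*}\beta$, and then translating the three resulting integrals back to $M$ via the displayed formula, yields
\[
\mu^{k+1}(M)\,\mu^{k-1}(M) \;=\; \Big(\int_M \alpha^{k+1}\beta^{d-k-1}\Big) \Big(\int_M \alpha^{k-1}\beta^{d-k+1}\Big) \;\le\; \Big(\int_M \alpha^k \beta^{d-k}\Big)^{2} \;=\; \mu^k(M)^{2},
\]
which is the desired inequality.

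\textbf{Main obstacle.} Essentially all the content is concentrated in the M\"obius identity $\int_M \alpha^k\beta^{d-k} = \mu^k(M)$; once it is available, the rest is a single invocation of \Cref{lem:nefineq} on $T_E^{k-1}(M)$. It is worth emphasizing that the only Hodge-theoretic input to the argument is the degree-$1$ Hodge--Riemann relations of \Cref{thm:HRone}, applied to the classical Bergman fans of the successive truncations: no Hodge--Riemann relations in higher degrees and no generalized Bergman fans are required, in contrast with the argument of \cite{AHK18}.
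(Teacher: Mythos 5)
Your proof is correct and amounts to a cleaner repackaging of the paper's argument; both reduce the inequality to a single application of \Cref{lem:nefineq} after a sequence of principal truncations by $E$.

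The paper proceeds by induction on $\rk(M)$: for $k < d-1$, it applies the inductive hypothesis to $T_E(M)$ and uses the combinatorial fact that $\overline\chi_{T_E(M)}$ shares its lower coefficients with $\overline\chi_M$; the base case $k = d-1$ then invokes \Cref{lem:nefineq}. You bypass the induction and the reduced-characteristic-polynomial combinatorics by pushing the relevant degrees directly to $M' = T_E^{k-1}(M)$ via $\alpha^{k-1}\cap\Delta_M = \Delta_{M'}$ (iterated \Cref{thm:hyper}) and applying \Cref{lem:nefineq} once on $M'$. This is essentially the induction unwound, but it makes the role of \Cref{thm:hyper} explicit and avoids quoting the char-polynomial truncation identity as a separate ingredient, which is a genuine streamlining.

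One small slip worth correcting: the identity from \cite[Proposition~9.5]{AHK18} is $\mu^k(M) = \int_M \alpha^{d-k}\beta^k$, with $\alpha$ appearing to the power $d-k$, not $k$. Under this convention, your final displayed chain of equalities reads
\[
\mu^{d-k-1}(M)\,\mu^{d-k+1}(M) \;\le\; \mu^{d-k}(M)^2,
\]
which is still the desired inequality after the change of index $j = d-k$ (and $j$ runs over $1,\dots,d-1$ as $k$ does). So the conclusion stands, but the stated form of the M\"obius identity should be transposed. Also note that with this corrected convention, truncating by $T_E^{d-k-1}$ (as in the paper) rather than $T_E^{k-1}$ would land you on the ``native'' inequality for index $k$; your version proves the inequality for index $d-k$ instead, which is equivalent by the symmetry of the statement.
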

\begin{proof}
        This proof is reproduced from \cite[Proposition 9.8]{AHK18}.
        We proceed by induction on $\rk(M)$. 
        When $k < d-1$, the induction hypothesis applied to the truncation $T_E(M)$ implies the inequality because 
        the absolute values of the lower coefficients of $\overline\chi_{T_E(M)}$ are the same as those of $\overline\chi_M$.
        Now, consider $k = d-1$.
        For any $i \in E$, denote $\alpha  := \sum_{i \in F} x_F \in A^1(M)$ and $\beta := \sum_{i \not \in F} x_F $.
        Both $\alpha$ and $\beta$ are independent of the choice of $i$ and are nef.  Proposition 9.5 of \cite{AHK18} states that $\mu^k(M) = \int_M \alpha^{d-k} \beta^k$; therefore, the desired inequality is
        \[
                \left(\int_M \alpha^2 \beta^{d-2} \right) \left( \int_M \beta^2 \beta^{d-2} \right) \leq \left( \int_M \alpha\beta\beta^{d - 2} \right)^2.
        \]
        Since $\alpha$ and $\beta$ are nef, this inequality holds by Lemma \ref{lem:nefineq}.
\end{proof}


\printbibliography

\end{document}